\documentclass[reqno]{amsart}
\usepackage{graphicx, enumerate,amssymb}
\vfuzz2pt 
\hfuzz2pt 
\newtheorem{thm}{Theorem}[section]

\newtheorem{lem}[thm]{Lemma}
\newtheorem{prop}[thm]{Proposition}
\newtheorem{result}[thm]{Result}
\theoremstyle{definition}

\theoremstyle{remark}

\numberwithin{equation}{section}
\newcommand{\norm}[1]{\left\Vert#1\right\Vert}
\newcommand{\abs}[1]{\left\vert#1\right\vert}
\newcommand{\rl}{{\mathbb{R}}}
\newcommand{\cx}{{\mathbb{C}}}
\newcommand{\dist}{{\mathrm { dist}}}
\newcommand{\tensor}{\otimes}
\newcommand{\csor}{\widehat{\otimes}}

\newcommand{\psh}{plurisubharmonic }

\newcommand{\D}{\mathbb{D}}
\newcommand{\dbar}{\overline{\partial}}

\newcommand{\tmop}[1]{\ensuremath{\operatorname{#1}}}
\renewcommand{\Re}{\tmop{Re}}

\newcommand{\pss}{\widetilde{W}}

\title[Condition R and proper holomorphic  maps between product domains]{Condition R and proper holomorphic  maps between equidimensional  product domains}
\thanks{Kaushal Verma was supported in part by the DST SwarnaJayanti fellowship 2009--2010 and the UGC--CAS Grant}
\keywords{Proper Holomorphic Mappings; Condition R}
\subjclass[2010]{32H40}
\author{Debraj Chakrabarti}
\address{Department of Mathematics, Central Michigan University, Mt. Pleasant,  MI 48859,  USA}
\email{chakr2d@cmich.edu}
\author{Kaushal Verma}
\address{Department of Mathematics, Indian Institute of Science, Bengaluru-560012, India}
\email{kverma@math.iisc.ernet.in}
\begin{document}
\begin{abstract}We consider proper holomorphic mappings of equidimensional pseudoconvex domains in complex Euclidean space,
where both source and target can be represented as Cartesian products of smoothly bounded domains. It is shown that such 
mappings extend smoothly up to the closures of the domains, provided each factor of the source satisfies Condition R. 
It also shown that the number of smoothly bounded factors in the source and target must be the same, and the proper
holomorphic map splits as product of proper  mappings between  the factor domains.
\end{abstract}

\maketitle
\section{Introduction}

\subsection{Proper mapping of product domains} Let $D$ and $G$ be bounded domains in $\cx^n$, each of which can be represented as a product
of smoothly bounded domains, where `smooth' in this article always means $\mathcal{C}^\infty$. More precisely,
there exist positive integers $k$ and $l$, so that 
$D = D^1 \times D^2 \times \ldots \times D^k$ and
$G = G^1 \times G^2 \times \ldots \times G^l$ where  for $1 \le i \le 
k$ ,  there is a positive integer $\mu_i$ such that  $D^i \subset \mathbb C^{\mu_i}$ is a smoothly bounded domain,
and similarly for  $1 \le j \le l$, there is  a postive integer $\nu_j$ such that   
$G^j \subset \mathbb C^{\nu_j}$ is a  smoothly bounded domain. 
Of course then we  have $\mu_1 + \mu_2 + \ldots + \mu_k = \nu_1 + \nu_2 + \ldots + \nu_l = n$.
Recall that the {\em Bergman Projection} on a domain $\Omega$ in complex Euclidean space is 
the orthogonal projection 
from the Hilbert space  $L^2(\Omega)$ of  functions square-integrable with respect to the standard Lebesgue measure
 to the  closed subspace $\mathcal{H}(\Omega)$ of square-integrable 
holomorphic functions. Recall further, that the domain $\Omega$ is said to satisfy {\em Condition R} if
the Bergman projection maps the space $\mathcal{C}^\infty(\overline{\Omega})$ of functions smooth up to the boundary on $\Omega$,
to the space $\mathcal{H}^\infty(\Omega)=\mathcal{O}(\Omega)\cap \mathcal{C}^\infty(\overline{\Omega})$ of holomorphic functions smooth up to the boundary.
Our main result is:

\begin{thm}\label{thm-main}
Assume that each of the factor domains $D^i$ constituting the product $D$ is pseudoconvex and  satisfies Condition R. Let 
$f : D \rightarrow G$ be a proper holomorphic mapping. 
Then,
\begin{enumerate}[(i)]
  \item $f$ extends  to a $\mathcal{C}^\infty$ map from  $\overline D$ to $\overline{G}$. 
  \item  $l = k$, i.e., the number of smooth factors in the domain and range coincide,  and 
  \item  there is a permutation $\sigma$ of the set $\{1,\dots, k\}$, such that  for   $1\leq i\leq k$,
  we have $\mu_i=\nu_{\sigma(i)}$, and there is a proper holomorphic map
  $f^i : D^i \rightarrow G^{\sigma(i)}$ such that    $f:D\to G$ is represented as a product mapping
    \[ f =  f^1 \times f^2 \times \ldots \times f^k\]. 
  \end{enumerate}
\end{thm}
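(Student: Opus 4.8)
The plan is to reduce (i)--(iii) to the classical theorem of Bell and Bell--Catlin that a proper holomorphic map between two \emph{smoothly} bounded domains extends smoothly to the closure as soon as the source is pseudoconvex and satisfies Condition R. There are two obstructions to applying that theorem directly: neither $\overline D$ nor $\overline G$ is a smooth manifold with boundary (they are manifolds with corners), and Condition R is assumed only on the factors. The first preliminary is that $D$ itself satisfies Condition R: $\mathcal{H}(D^1\times\cdots\times D^k)=\mathcal{H}(D^1)\,\widehat{\otimes}\cdots\widehat{\otimes}\,\mathcal{H}(D^k)$, so the Bergman projection of $D$ is the Hilbert tensor product of the factor projections, and since $\mathcal C^\infty(\overline D)=\mathcal C^\infty(\overline{D^1})\,\widehat{\otimes}\cdots\widehat{\otimes}\,\mathcal C^\infty(\overline{D^k})$ an operator that is regularizing on each factor is regularizing on the product; also $K_D(z,\zeta)=\prod_i K_{D^i}(z^i,\zeta^i)$ and $K_G(w,\eta)=\prod_j K_{G^j}(w^j,\eta^j)$. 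The hypothesis that the $D^i$ are pseudoconvex is used next: $D$ is then Stein, and since $f$ is proper with equidimensional range its fibres are compact analytic, hence finite, so by Remmert's theorem $f$ is open, thus surjective, and therefore $G$ is the image of a Stein domain under a finite surjective holomorphic map, hence Stein; consequently \emph{each $G^j$ is pseudoconvex}, a fact needed below. Using Condition R on $D$ together with the product form of $K_G$, Bell's methods (a form of the transformation formula $\sum_{\sigma}K_D(z,\sigma(\eta))/\overline{u(\sigma(\eta))}=u(z)\,K_G(f(z),\eta)$, where $u=\det f'$ and $\sigma$ runs over the local inverse branches of $f$) show that $f$, and hence $u$, extends continuously to $\overline D$; by properness $f(\partial D)=\partial G$.

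The heart of the argument is the splitting. Stratify $\partial D=\bigsqcup_i\mathcal F^D_i\,\sqcup(\text{edges})$, where $\mathcal F^D_i=\partial D^i\times\prod_{i'\ne i}D^{i'}$ are the smooth faces, and similarly for $\partial G$. One shows that $f$ carries each face $\mathcal F^D_i$ into the closure of a single face $\mathcal F^G_{\tau(i)}$: the faces are relatively open in $\partial G$, distinct open faces have disjoint closures, and a ``switch'' of target faces along $\mathcal F^D_i$ would force $\partial D$ to develop a corner there, since near such a point $D=\{\rho_{\tau(i)}\circ f^{\tau(i)}<0\}\cap\{\rho_{\tau(i')}\circ f^{\tau(i')}<0\}$ with $\rho_{G^j}$ defining functions; one must also check that $f^{-1}(\text{edges of }\partial G)$ has empty interior in $\partial D$, which follows from $\det f'\not\equiv0$ and boundary uniqueness for holomorphic functions. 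This defines $\tau:\{1,\dots,k\}\to\{1,\dots,l\}$. It is surjective, because $f(\partial D)=\partial G$ while $f(\partial D)\subseteq\bigcup_i\overline{\mathcal F^G_{\tau(i)}}$ and the $\overline{\mathcal F^G_j}$ for $j$ not in the image are disjoint from the remaining faces; and it is injective, because $\tau(i_1)=\tau(i_2)$ would make $f^{\tau(i_1)}$ proper separately in $z^{i_1}$ and in $z^{i_2}$ with $\mu_{i_1}=\mu_{i_2}=\nu_{\tau(i_1)}$, which together with a dimension/finiteness count and the elementary fact that a \emph{smoothly} bounded domain is never a non-trivial product is incompatible with the fibres of $f$ being finite. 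Hence $k=l$ and $\tau$ is a permutation; write $\sigma=\tau^{-1}$ (or reindex).

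For each $i$ the face condition says $\rho_{\sigma(i)}\circ f^{\sigma(i)}\le0$ on $\overline D$ and $\equiv0$ on $\{z^i\in\partial D^i\}$. Freezing $z^{i'}$ ($i'\ne i$) in the interiors and applying the Hopf lemma shows that the complex-tangential derivatives $\partial f^{\sigma(i)}/\partial z^{i'}$ are tangent to $\partial G^{\sigma(i)}$ along this boundary piece; differentiating the resulting relation once more and using pseudoconvexity of $G^{\sigma(i)}$ (i.e. the Levi form) upgrades ``tangent'' to ``zero'' on an open subset of $\partial D$, and boundary uniqueness for holomorphic functions gives $\partial f^{\sigma(i)}/\partial z^{i'}\equiv0$ on $D$ for all $i'\ne i$. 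Thus $f^{\sigma(i)}$ depends only on $z^i$; equidimensionality of proper maps forces $\nu_{\sigma(i)}=\mu_i$, the map $f^i:=f^{\sigma(i)}$ is a proper holomorphic map $D^i\to G^{\sigma(i)}$, and $f=f^1\times\cdots\times f^k$ after the evident reindexing, which is (ii) and (iii). Finally each $f^i:D^i\to G^{\sigma(i)}$ is now a proper holomorphic map between \emph{smoothly} bounded domains whose source is pseudoconvex and satisfies Condition R (and whose target is pseudoconvex), so by Bell--Catlin $f^i\in\mathcal C^\infty(\overline{D^i})$, whence $f\in\mathcal C^\infty(\overline D)$; this is (i).

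The main obstacle is the splitting step, in two places. First, the combinatorial claim that the target-face index is locally constant on $\mathcal F^D_i$, and the injectivity of $\tau$, must be proved carefully near the singular strata of $\partial G$, where the continuous extension $f$ could a priori behave wildly; this I would handle with the corner/transversality and dimension arguments indicated above, together with the observation that $\det f'$ vanishes on a set of empty interior in $\partial D$. Second, the Levi-form step is delicate when $G^{\sigma(i)}$ is only weakly pseudoconvex, since then the Levi form has a non-trivial kernel and ``tangent'' does not immediately give ``zero''; one must exploit that the tangency holds along a whole open family of boundary points and for all directions $z^{i'}$ ($i'\ne i$) simultaneously, and bootstrap through higher-order derivatives, rather than invoking strict positivity of the Levi form.
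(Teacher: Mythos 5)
Your overall strategy (split $f$ into factor maps, then apply Bell--Catlin to each factor) is the same as the paper's, and your observation that Condition R passes to products via the tensor-product structure of the Bergman projection is exactly the paper's Lemma~3.1. But two of your steps have genuine gaps. First, the continuous extension of $f$ (and of $u=\det f'$) to $\overline D$ cannot simply be cited from ``Bell's methods'': those are written for smoothly bounded domains, while $\overline D$ and $\overline G$ are only Lipschitz, with corners. The bulk of the paper consists of re-proving the needed ingredients in the product setting --- a Hopf lemma for plurisubharmonic exhaustions of product domains (obtained by rolling affine analytic discs instead of balls), the resulting two-sided estimate $C^{-1}\dist(z,\partial D)^{1/\eta}\leq\dist(f(z),\partial G)\leq C\dist(z,\partial D)^{\eta}$, Bell operators and negative-order duality adapted to the partial Sobolev spaces $\pss^s$, smoothness up to $\overline G$ of the symmetric functions of the branches of $f^{-1}$, finite-order vanishing of $u$ on $\partial D$, and finally a Diederich--Fornaess weak division argument. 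None of this is automatic, and your proposal does not supply it.

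Second, your splitting argument contains a false statement and an admitted unresolved difficulty. Distinct open faces of $\partial G$ do \emph{not} have disjoint closures --- already for the bidisc, the closures of $\partial\D\times\D$ and $\D\times\partial\D$ meet along the distinguished boundary --- so the local constancy of the target-face index on a face of $\partial D$ does not follow from your connectedness argument. And your Levi-form step, which you yourself flag as problematic when $G^{\sigma(i)}$ is only weakly pseudoconvex, is precisely where the argument breaks: the Levi form may have a large kernel, and ``tangent'' does not upgrade to ``zero''. The paper sidesteps both issues by arguing in the opposite direction, following Ligocka: for each target factor $G^j$ one takes the nonempty open set $V$ of \emph{strongly} pseudoconvex points of $\partial G^j$; then $f_j^{-1}(V)$ meets a regular stratum $\partial D^i\times D'$ of $\partial D$, and since $V$ contains no positive-dimensional analytic sets, $z\mapsto f_j(u,z)$ is locally constant on an open subset of $D'$, whence boundary uniqueness on $D^i$ shows that $f_j$ depends only on the factor $D^i$. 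A dimension count, plus one more strongly-pseudoconvex-point argument to rule out a factor $D^i$ being mapped properly onto a product of several $G^j$'s, then yields $k=l$ and the permutation. Adopting this device is what makes weak pseudoconvexity of the target harmless; without it, or a genuine substitute for the Levi-form bootstrap, the splitting step is not established.
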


\noindent {\em Remark: } The hypotheses of the theorem imply that $G$, being the image of a pseudoconvex domain under a proper 
holomorphic mapping, is also pseudoconvex. Thus each factor $G^j$ must be pseudoconvex.

\subsection{Poincar\'{e} meets Fefferman} In the theory of holomorphic mappings in several variables, there 
are two well-known types of results. The simplest result of the first type is traditionally attributed to Poincar\'{e},
and states that the unit ball in $\cx^2$ cannot be biholomorphically mapped onto the unit bidisc. This has been generalized
in many directions (see \cite{RS,R,T,Huck1} etc.)  A result of Rischel \cite{R} implies that no strongly pseudoconvex domain in $\cx^n$ can be properly mapped
onto a product domain. In another direction, for biholomorphic maps,  a result of Tsyganov \cite{T} states that any biholomorphic
map of smoothly bounded product domains is represented as a product of biholomorphic maps in the factors.

\medskip

Another class of results regarding holomorphic maps generalizes  the boundary regularity of conformal mappings of smooth domains 
in one variable. A famous result of Fefferman \cite{fef} states that a biholomorphic map of strongly pseudoconvex domains smoothly 
extends to the closures of the domains. This has been generalized to the following equally famous theorem:
\begin{result}[{\cite{BL, B2,Be1,Be2,BC,DF}}]\label{res-condnr} A proper mapping of smoothly bounded equidimensional
pseudoconvex domains in complex Euclidean
space  extends smoothly up to the boundary, provided the source domain of the map satisfies condition R.
\end{result}
Indeed it is conjectured that such smooth extension to the 
boundary actually holds for proper mappings between arbitrary smoothly bounded pseudoconvex domains, 
and the hypothesis of condition R is redundant.

\medskip

Theorem~\ref{thm-main}, the main result of this paper, shows the close relation between the `Poincar\'{e} type' and 
`Fefferman type' results mentioned above. Indeed, the novelty of our approach here is that 
the proof of Theorem~\ref{thm-main} is an adaptation of  the classical  technique used to prove 
Result~\ref{res-condnr}.
The key observation, explicitly made in \cite{CS} is that Condition R 
holds on a product domain provided it holds on each factor domain. This is a direct consequence of  the formula representing 
the Bergman kernel of a product domain as the product of the Bergman kernels of the factors.

   The  arguments of Bell-Catlin-Diederich-Fornaess-Ligocka, suitably modified, can be applied to product domains, in order to conclude that a proper mapping of product domains extends
 as a continuous mapping of the closures of the domains. At this point, one can use a method of Ligocka (see \cite{L}) to complete the argument
 to show that the proper map actually splits. The smooth extension to boundary follows from the Bell-Catlin  result applied to each factor.

\subsection{General Remarks}We note that it is possible to generalize Theorem~\ref{thm-main} to the situation of a  product of smoothly
bounded domains in Stein manifolds. The proof given here goes through, when rewritten in invariant language.

It is well-known that results of either the ``Poincar\'{e} type" or of the ``Fefferman type"  are not statements only about the 
complex structures of the domains being mapped, but depend crucially on the Hermitian metric up to the boundary.
For example, we cannot replace the ambient manifolds in Poincar\'{e} type results by arbitrary manifolds, as shown in \cite{ohsawa}, where 
the example of a smoothly bounded domain in a compact manifold is given, which is biholomorphic to a product domain.
The fact that Fefferman-type results do not generalize to domains in compact manifolds follows from the closely related
construction in \cite{Ba0}. 
Also of interest in this connection is the example in \cite{fri} of two  domain  in $\cx^2$, each biholomorphic to the bidisc, 
but such that any biholomorphism between them does not extend continuously to the closures. 
This shows that Carath\'{e}odory's theorem
on conformal mapping has no general analog in several variables, and more importantly we do need actual product 
domains for the result of Theorem~\ref{thm-main} to hold (as distinguished from domains in $\cx^n$ 
biholomorphic  to product domains.) Results related to Theorem~\ref{thm-main} can be found in \cite{L,N,R, T} and \cite{Z}.

There is good reason to speculate whether the hypothesis of Condition R  on the factors of the source domain
is really necessary for the truth of the conclusion of Theorem~\ref{thm-main} to hold. However, our goal here 
is to employ techniques used in the theory of extension of mappings to the boundary,  in the proof of a  Poincar\'{e} type
result for proper mappings as far as possible, and  the application of these techniques does require this hypothesis. 

We also note that many of the techniques of this paper generalize to the class of domains referred in \cite{Ba} as 
``domains with non-degenerate corners."  In a future work, we will consider the holomorphic mappings of this class of
domains in full generality, and here restrict ourselves to the product domains.

In the following Section~\ref{sec-functions} we recall some constructions of function spaces on product domains.
Most of the results here can be found in \cite{prod}. Next, in Section~\ref{sec-bergman}, we generalize some well-known 
classical facts regarding spaces of holomorphic function to product domains. Sections~\ref{sec-estimates} and 
\ref{sec-proof} complete the proof of  Theorem~\ref{thm-main}.  For completeness, we include proofs of several 
intermediate statements and lemmas which are well-known for smooth domains.

\subsection{Acknowledgements} We thank  Mei-Chi Shaw for bringing this question to our attention.  We also
thank   David Barrett  and  Dror Varolin for interesting  discussions and comments. Debraj Chakrabarti thanks
the University of Western Ontario for its hospitality where part of this work was done. We also thank the referee for  helpful comments 
on the first version of this paper.


\section{Function spaces  on Product Domains}\label{sec-functions}
\subsection{Smooth functions}
Let $s$ be  a positive integer or $\infty$.  We recall that a function on a (not necessarily open) subset $E$ of $\rl^N$ 
is said to be of class $\mathcal{C}^s$
(in the sense of Whitney)
if  there is an open neighborhood of $E$ to which the function can be extended as a $\mathcal{C}^s$ function in the usual sense.
If $E$ is the closure $\overline{\Omega}$  of a  Lipschitz Domain $\Omega$ (i.e. a domain in which after an affine change of coordinates
 the  boundary can be locally represented as the graph of a Lipschitz function)
this definition coincides with any other reasonable definition of a $\mathcal{C}^s$ function, and further,
a function is in $\mathcal{C}^\infty(\overline{\Omega})$ if it is in $\mathcal{C}^s(\overline{\Omega})$ 
for each positive integer $s$  (see \cite[Chapter VI]{stein}.)

\medskip

On a domain $\Omega\subset \rl^N$ in Euclidean space, 
the  ($L^2$-)Sobolev space $W^s(\Omega)$  of positive integral order $s$ is the space  of those distributions
on $\Omega$, which are in $L^2$ along with all distributional partial derivatives of order up to  $s$. Recall that such a space is a Hilbert space 
with the usual inner product. We will be interested in the case when $\Omega$ is
Lipschitz.  A fundamental
fact regarding the space $W^s(\Omega)$, when $\Omega$ is Lipschitz is the following (see \cite[Page~181, Theorem 5$^\prime$]{stein}):
{\em There exists a continuous linear extension operator  $\mathfrak{E}$ from $W^s(\Omega)$ to $W^s(\rl^N)$: i.e., for each $f\in W^s(\Omega)$, we have
$(\mathfrak{E}f)|_\Omega=f$.} This, along with Sobolev embedding of $W^s(\rl^N)$ for large $s$ in 
a space of smooth functions, imply that on a domain $\Omega$ with Lipschitz boundary, we have
\[ \mathcal{C}^\infty(\overline{\Omega}) = \bigcap_{k=0}^\infty W^s(\Omega),\]
and further, the usual Fr\'{e}chet topology on $\mathcal{C}^\infty(\overline{\Omega})$  (given by the $\mathcal{C}^s$-norms)
coincides with the Fr\'{e}chet topology given by the $W^s$-norms. 

\medskip

We denote, as usual, by $\mathcal{C}^\infty_0(\Omega)$ the space of smooth functions on $\Omega$ with compact support,
and let  $W^s_0(\Omega)$ be the closure of $\mathcal{C}^\infty_0(\Omega)$ in $W^s(\Omega)$.

\subsection{Tensor Products.} We recall some algebraic notation which will facilitate working with functions on product
domains. For $j=1,\dots, k$, let $\Omega_j$ be a  domain in $\rl^{N_j}$, and let $\Omega$ be the product
$\Omega_1\times\Omega_2\times \ldots \times\Omega_k$, which is a domain in $\rl^N$, where $N=N_1 + N_2 + \ldots N_k$. If $f_j$ is a complex-valued
function on $\Omega_j$,  we denote by
\[ f_1\tensor\dots\tensor f_k\]
(the {\em tensor product} of the $f_j$'s)
the function on $\Omega$ defined by
\[ f(z_1,\dots,z_k)= f(z_1)\cdot f(z_2)\cdot \dots\cdot f(z_k)=\prod_{j=1}^k f(z_j),\]
where $z_j\in \Omega_j\subset\rl^{N_j}$. 

\medskip

For each $j$, let $X_j$ be a complex vector space of functions on $\Omega_j$. The {\em algebraic tensor product} 
\[ X_1\tensor \dots\tensor X_k = \bigotimes_{j=1}^k X_j\]
is a complex vector space of functions on $\Omega$.  By definition, every element of the algebraic tensor product 
may be written as a finite linear combination of tensor products of the form $f_1\tensor\dots\tensor f_k$, where $f_j\in X_j$.
We now recall the following basic fact:
 
 \begin{result}[{\cite[p.~369]{hor}}]\label{res-tensor}(1) The algebraic tensor product $\bigotimes_{j=1}^k\mathcal{C}^\infty_0(\Omega_j)$ 
is dense in the $\mathcal{C}^\infty$-topology
in $\mathcal{C}^\infty_0(\Omega)$ .

(2) Further, the algebraic tensor product $\bigotimes_{j=1}^k\mathcal{C}^\infty(\overline{\Omega}_j)$ is dense in 
$\mathcal{C}^\infty(\overline{\Omega})$ (in the Fr\'{e}chet topology.)
\end{result}

\noindent If $\mathsf{H}_j$ is a Hilbert space of functions on $\Omega_j$,  then the algebraic tensor product $\mathsf{H}=\bigotimes_{j=1}^k {\mathsf H}_j$ 
comes with a natural inner product,   given on functions which are tensor products as
\[ (f_1\tensor\dots\tensor f_k, g_1\tensor\dots\tensor g_k)_{\mathsf{H}}= \prod_{j=1}^k (f_j,g_j)_{\mathsf{H}_j}.\]
It is easy to verify that this extends by linearity to a  consistently defined inner product on $\mathsf{H}$. The completion of $\mathsf{H}$
with respect to this inner product is by definition, the {\em Hilbert Tensor Product}, which is a Hilbert space denoted by
\[  \mathsf{H}_1\csor \mathsf{H}_2\csor \dots \csor\mathsf{H}_k= {\widehat{\bigotimes}}_{j=1}^k \mathsf{H}_j.\]
 
\noindent If we take $\mathsf{H}_j$ to be the space $L^2(\Omega_j)$, it is not difficult to to show that 
\[ {\widehat{\bigotimes}}_{j=1}^k L^2(\Omega_j) = L^2(\Omega).\]

\noindent If for each $j$, we are also given a Hilbert space  $\mathsf{H}_j'$  of functions on a set $\Omega_j'$, and  bounded  linear maps $T_j:\mathsf{H}_j\to \mathsf{H}_j'$,   
we can define a map 
\[
T_1\tensor \dots \tensor T_k : \bigotimes_{j=1}^k\mathsf{H}_j \rightarrow \bigotimes_{j=1}^k\mathsf{H}_j'
\] 
between algebraic tensor products by setting 
\[
(T_1\tensor \dots  \tensor T_k)(h_1\tensor\dots \tensor h_k) = T_1(h_1)\tensor\dots\tensor T_k(h_k)
\]
on tensor products and extending linearly. This extends by continuity to a bounded linear map 
\[
T_1\csor\dots \csor T_k : {\widehat{\bigotimes}}_{j=1}^k \mathsf{H}_j \rightarrow {\widehat{\bigotimes}}_{j=1}^k \mathsf{H}_j'.
\]

\subsection{Partial Sobolev Spaces}
Let $s\geq 0$ be an integer, and define:
\begin{equation}\label{eq-pssdef} 
\pss^s(\Omega)= {\widehat{\bigotimes}}_{j=1}^kW^s(\Omega_j),
\end{equation}
and
\begin{equation}\label{eq-pss0def} 
\pss^s_0(\Omega)= {\widehat{\bigotimes}}_{j=1}^kW^s_0(\Omega_j),
\end{equation}
These spaces were studied in detail under the name ``Partial Sobolev Spaces" in 
\cite{prod}. We recall the following  simple property:
\begin{lem}If each $\Omega_j$ has  a Lipschitz boundary, we have
\begin{equation}\label{eq-pssinc}
 W^{ks}(\Omega)\subsetneq \pss^s(\Omega) \subsetneq W^s(\Omega),
\end{equation}
and
\begin{equation}\label{eq-pss0inc}
 W^{ks}_0(\Omega)\subsetneq \pss^s_0(\Omega) \subsetneq W^s_0(\Omega),
\end{equation}
where all inclusions are continuous.
\end{lem}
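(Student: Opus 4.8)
The plan is to establish the two chains of inclusions in \eqref{eq-pssinc} and \eqref{eq-pss0inc} by handling the middle spaces one side at a time, reducing everything to the $k=2$ case by induction, and reducing the $k=2$ case to statements about tensor products of functions via the density results and the definition of the Hilbert tensor product norm. First I would treat the inclusion $\pss^s(\Omega)\hookrightarrow W^s(\Omega)$. On a tensor product $f_1\tensor\dots\tensor f_k$ with $f_j\in W^s(\Omega_j)$, any distributional partial derivative of total order $\le s$ distributes across the factors: a derivative of order $\alpha_j$ in the $j$-th block of variables (with $\sum_j|\alpha_j|\le s$, hence each $|\alpha_j|\le s$) hits $f_j$ alone, so by Fubini the $W^s(\Omega)$-norm of the tensor product is bounded by a constant times $\prod_j\norm{f_j}_{W^s(\Omega_j)}=\norm{f_1\tensor\dots\tensor f_k}_{\pss^s(\Omega)}$. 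Since algebraic tensor products are dense in $\pss^s(\Omega)$ by construction, this estimate extends by continuity, giving the continuous inclusion; here it is convenient to first verify the bound on the algebraic tensor product, using that the cross-terms in expanding $\norm{\sum_\ell f_1^\ell\tensor\dots\tensor f_k^\ell}_{W^s}^2$ are controlled by Cauchy--Schwarz against the corresponding $\pss^s$ inner product. The strictness of this inclusion follows because $W^s(\Omega)$ contains functions with a mixed derivative $\partial^{s+1}$ of order $s+1$ in a single block but which are not in $L^2$ after taking the corresponding higher-order pure derivatives needed for $\pss^s$-membership — concretely one exhibits an explicit function on a product of intervals (or balls) that lies in $W^s$ but whose iterated block-derivatives of order $s$ in two different factors fail to be square-integrable; a rank-one example $g_1(z_1)g_2(z_2)$ with $g_j\in W^s(\Omega_j)\setminus W^{s}$... more precisely with $g_j\in L^2$, $g_j\in W^{s}$ but the product's membership in $\pss^s$ forcing $g_j\in W^s$ on \emph{each} factor simultaneously at the top order is where it differs.

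For the inclusion $W^{ks}(\Omega)\hookrightarrow\pss^s(\Omega)$, the point is the reverse: a function in $W^{ks}(\Omega)$ has \emph{all} partial derivatives of total order up to $ks$ in $L^2$, and in particular all ``fully mixed'' derivatives $\partial_1^{\alpha_1}\cdots\partial_k^{\alpha_k}$ with each $|\alpha_j|\le s$, since $\sum_j|\alpha_j|\le ks$. The norm $\norm{\cdot}_{\pss^s(\Omega)}$ is equivalent to $\bigl(\sum \norm{\partial_1^{\alpha_1}\cdots\partial_k^{\alpha_k} u}_{L^2(\Omega)}^2\bigr)^{1/2}$ where the sum is over multi-indices with $|\alpha_j|\le s$ for every $j$ — this characterization of the partial Sobolev norm in terms of mixed derivatives is exactly what is recorded in \cite{prod}, and I would cite it; granting it, $W^{ks}\hookrightarrow\pss^s$ is immediate and continuous because each such mixed-derivative norm is dominated by the full $W^{ks}$-norm. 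Strictness holds because $\pss^s$ does not require control of, say, a pure derivative $\partial_1^{ks}$ of order $ks$ in a single block, so one can build $u(z)=h(z_1)$ depending on the first block alone, with $h\in W^s(\Omega_1)\setminus W^{s+1}(\Omega_1)$ (when $k\ge 2$, $ks>s$), which lies in $\pss^s(\Omega)$ but not in $W^{ks}(\Omega)$.

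The statements \eqref{eq-pss0inc} for the ``zero'' spaces are handled in parallel: $W^s_0$, $\pss^s_0$, $W^{ks}_0$ are the closures of $\mathcal{C}^\infty_0$ in the respective norms, the same norm inequalities hold on $\mathcal{C}^\infty_0(\Omega)$, and density of $\bigotimes_j\mathcal{C}^\infty_0(\Omega_j)$ in $\mathcal{C}^\infty_0(\Omega)$ (Result~\ref{res-tensor}(1)) together with the continuity of the tensor-product construction lets one identify $\pss^s_0(\Omega)$ with the closure of the algebraic tensor product of the $\mathcal{C}^\infty_0(\Omega_j)$ in the $\pss^s$-norm; the inclusions then follow from the norm comparisons by passing to closures, and strictness is inherited from the non-boundary case by choosing the witnessing functions to have compact support. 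The main obstacle I anticipate is not any single inequality but pinning down cleanly the equivalence between the abstract Hilbert-tensor-product norm on $\pss^s(\Omega)$ and the concrete ``mixed partial derivative'' norm — without that bridge neither inclusion is transparent — so I would state that equivalence explicitly at the outset (with a reference to \cite{prod}), and then both chains become short computations together with the construction of the rank-one counterexamples witnessing strictness.
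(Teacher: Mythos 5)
Your overall route matches the paper's: the paper disposes of \eqref{eq-pssinc} by citing the explicit mixed-derivative description of the $\pss^s$-norm from \cite{prod} (pp.~992--993, Lemma~5.1), and then obtains \eqref{eq-pss0inc} from \eqref{eq-pssinc} together with part (1) of Result~\ref{res-tensor} --- exactly the density argument you describe for the zero-subscript spaces. Your derivation of the two continuous inclusions from that norm equivalence is correct: a derivative of total order $\le s$ has each block-order $\le s$, and a mixed derivative with each block-order $\le s$ has total order $\le ks$. So for the inclusions themselves you are doing what the paper does, just with the cited computation written out.

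The one step that fails as written is your witness for the strictness of $\pss^s(\Omega)\subsetneq W^s(\Omega)$. A rank-one tensor $g_1\tensor\dots\tensor g_k$ with every $g_j\in W^s(\Omega_j)$ is automatically an element of the algebraic tensor product $\bigotimes_{j}W^s(\Omega_j)\subset\pss^s(\Omega)$, with $\pss^s$-norm equal to $\prod_j\norm{g_j}_{W^s(\Omega_j)}$; hence no rank-one example can lie in $W^s(\Omega)\setminus\pss^s(\Omega)$, and the sentence in which you try to set one up (``$g_j\in W^s(\Omega_j)\setminus W^s$\dots'') describes an empty set. A genuine witness must be non-decomposable: one needs $u\in W^s(\Omega)$ for which some mixed derivative $\partial_1^{\alpha_1}\cdots\partial_k^{\alpha_k}u$ with each $\abs{\alpha_j}\le s$ but $\sum_j\abs{\alpha_j}>s$ fails to be square-integrable --- for instance, with $k=2$ and $s=1$, a function with gradient in $L^2$ near a boundary point but mixed second derivative not in $L^2$. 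Your witness for the other strict inclusion (a function $h(z_1)$ of the first block alone with $h\in W^s(\Omega_1)\setminus W^{s+1}(\Omega_1)$, assuming $k\ge 2$) is fine. To be fair, the paper does not prove strictness either --- it delegates all of \eqref{eq-pssinc} to \cite{prod}, and only the continuity of the inclusions is used later --- but since you undertake to exhibit the counterexamples, this one needs repair.
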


\noindent The proof of \eqref{eq-pssinc}, which is based on an explicit description of the norm on $\pss^s(\Omega)$,
 may be found in \cite{prod} (see pp.~992-993,  especially Lemma~5.1.)  The proof of  \eqref{eq-pss0inc} follows
from \eqref{eq-pssinc} using part (1) of Result~\ref{res-tensor} above.



\section{Bergman spaces on product domains}\label{sec-bergman}

\noindent We now specialize the considerations of the previous section to the case of domains in complex space. For $j=1,\dots, k$,  let 
$\Omega_j\subset\cx^{n_j}$ be a domain and  let $\Omega\subset \cx^n$ be the product domain $\Omega_1\times\Omega_2\times \ldots \times\Omega_k$,
where $n=n_1 + n_2 + \ldots + n_k$. For any domain $D\subset\cx^n$, let $\mathcal{H}(D) = L^2(D)\cap \mathcal{O}(D)$ be the {\em Bergman space}. Let  $P_j:L^2(\Omega_j)\to 
\mathcal{H}(\Omega_j)$ be the orthogonal projection, 
 known as the {\em Bergman projection}, and similarly
 $P:L^2(\Omega)\to \mathcal{H}(\Omega)$ is the Bergman projection on the product. 
 We begin by noting the following facts,  the crucial among them being the fact that Condition R is stable under formation of 
 products.

\begin{lem}\label{lem-miscprod} Assume that each $\Omega_j$ is bounded and pseudoconvex. Then:

(1) The Bergman space  on $\Omega$ may be represented as a Hilbert  tensor
product:
\[ \mathcal{H}(\Omega) = {\widehat{\bigotimes}}_{j=1}^k\mathcal{H}(\Omega_j).\]
 
 (2) The Bergman projection $P$ on $\Omega$ is represented  as
 \[ P = P_1 \csor \dots \csor P_k.\]

 (3) If each domain  $\Omega_j$ is  Lipschitz and  satisfies Condition R, then so does the product $\Omega$.

\end{lem}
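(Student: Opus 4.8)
The three assertions of Lemma~\ref{lem-miscprod} are of increasing depth, and I would prove them in order, with (1) and (2) being essentially formal and (3) requiring a genuine duality argument.

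For part (1), the plan is to exhibit $\mathcal{H}(\Omega)$ as the closed subspace of $L^2(\Omega)={\widehat{\bigotimes}}_{j=1}^k L^2(\Omega_j)$ spanned by tensor products of the form $f_1\tensor\dots\tensor f_k$ with $f_j\in\mathcal{H}(\Omega_j)$. One inclusion is immediate: each such tensor product is holomorphic on $\Omega$ and square-integrable, so ${\widehat{\bigotimes}}_{j=1}^k\mathcal{H}(\Omega_j)\subseteq\mathcal{H}(\Omega)$. For the reverse inclusion, I would take $F\in\mathcal{H}(\Omega)$ orthogonal to every such tensor product and show $F=0$; expanding $F$ in a complete orthonormal system for each factor (or using that the monomials, or a Bergman orthonormal basis, of the factors give by tensoring a complete system in $L^2(\Omega)$), the orthogonality conditions kill all Fourier coefficients of $F$ against a spanning set of $\mathcal{H}(\Omega)$, forcing $F=0$. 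The role of pseudoconvexity here is only to guarantee $\mathcal{H}(\Omega_j)\neq\{0\}$ and that the Bergman kernel behaves well; the essential content is the product formula for the Bergman kernel, $K_\Omega(z,w)=\prod_j K_{\Omega_j}(z_j,w_j)$, referenced in the introduction. Part (2) then follows directly: $P_1\csor\dots\csor P_k$ is, by the construction of tensor products of bounded operators recalled in Section~\ref{sec-functions}, a bounded operator on $L^2(\Omega)$; on tensor products $f_1\tensor\dots\tensor f_k$ it acts as the orthogonal projection (since it is idempotent and self-adjoint, being a tensor product of orthogonal projections), and its range is exactly ${\widehat{\bigotimes}}_{j=1}^k\mathcal{H}(\Omega_j)=\mathcal{H}(\Omega)$ by part (1); an operator that is an idempotent, self-adjoint, with the correct range is the orthogonal projection onto that range, hence equals $P$.

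For part (3), assume each $\Omega_j$ is Lipschitz and satisfies Condition R; I must show $P$ maps $\mathcal{C}^\infty(\overline{\Omega})$ into $\mathcal{H}^\infty(\Omega)$. The strategy is a functional-analytic "closed graph / uniform boundedness in Sobolev scale" argument, which is the standard route: Condition R for $\Omega_j$ is equivalent (by the closed graph theorem applied in the Fr\'echet space $\mathcal{C}^\infty(\overline{\Omega}_j)$, whose topology coincides with the $W^s$-topologies as noted) to the statement that for every integer $s\geq 0$ there is $m_j=m_j(s)\geq s$ such that $P_j$ extends to a bounded operator $W^{m_j}(\Omega_j)\to W^s(\Omega_j)$. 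Taking $m=\max_j m_j(s)$ (or the relevant common bound), tensoring these bounded maps gives that $P=P_1\csor\dots\csor P_k$ is bounded from $\pss^{m}(\Omega)={\widehat{\bigotimes}}_j W^{m}(\Omega_j)$ to $\pss^{s}(\Omega)={\widehat{\bigotimes}}_j W^s(\Omega_j)$. Now invoke the sandwich \eqref{eq-pssinc}: $W^{km}(\Omega)\subseteq\pss^{m}(\Omega)$ continuously, and $\pss^{s}(\Omega)\subseteq W^s(\Omega)$ continuously, so $P$ is bounded $W^{km}(\Omega)\to W^s(\Omega)$. Since $s$ was arbitrary and $\mathcal{C}^\infty(\overline{\Omega})=\bigcap_s W^s(\Omega)$, it follows that $P$ maps $\mathcal{C}^\infty(\overline{\Omega})$ continuously into $\bigcap_s W^s(\Omega)=\mathcal{C}^\infty(\overline{\Omega})$; combined with the fact that $Pu$ is holomorphic, this gives $Pu\in\mathcal{H}^\infty(\Omega)$, which is Condition R for $\Omega$.

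The main obstacle I anticipate is not any single step but the care needed in the bookkeeping of part (3): one must make sure that the extension of $P_j$ to a bounded operator on Sobolev spaces is genuinely the *same* operator as the $L^2$-Bergman projection (so that its tensor product is the $L^2$-operator $P$ whose boundedness properties we want), and that the tensor-product construction for bounded operators between the Hilbert spaces $W^{m}(\Omega_j)$ and $W^s(\Omega_j)$ — which are not spaces of $L^2$-type in an obvious product sense — is legitimate; this is exactly where the results of \cite{prod} on partial Sobolev spaces, and the inclusions \eqref{eq-pssinc}, are doing the real work. A secondary subtlety is justifying the closed-graph reformulation of Condition R, which requires knowing that $\mathcal{C}^\infty(\overline{\Omega}_j)$ is a Fr\'echet space and that point evaluations (or the $W^s$-seminorms) are continuous — all of which is supplied by the discussion of function spaces in Section~\ref{sec-functions}. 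I would present parts (1) and (2) briefly and devote the bulk of the write-up to the Sobolev-scale argument in (3).
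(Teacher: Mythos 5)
Your argument for part (3) is precisely the paper's proof: Condition R on each factor yields Sobolev estimates $W^{m_j(s)}(\Omega_j)\to W^s(\Omega_j)$ for the $P_j$, tensoring and part (2) give boundedness of $P$ from $\pss^{m(s)}(\Omega)$ to $\pss^{s}(\Omega)$, and the sandwich inclusions \eqref{eq-pssinc} convert this to boundedness from $W^{km(s)}(\Omega)$ to $W^s(\Omega)$. For (1) and (2) the paper simply cites \cite{prod} and \cite{spec}; your sketch of the reverse inclusion in (1) is right in outline but, as phrased, slightly circular (you test $F$ against ``a spanning set of $\mathcal{H}(\Omega)$'', which is the thing being proved) --- the missing observation is that $F\in\mathcal{H}(\Omega)$ is \emph{automatically} orthogonal to any tensor product having one factor in $\mathcal{H}(\Omega_j)^{\perp}$, because integrating $F$ against the remaining factors leaves a holomorphic $L^2$ function of $z_j$; with that, $F$ is orthogonal to a full orthonormal basis of $L^2(\Omega)$ and must vanish.
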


\begin{proof}For (1) and (2)  we refer to \cite[Corollary~4.6]{prod} (when $\Omega_1$ and $\Omega_2$ have Lipschitz boundaries) 
or to \cite[Theorem~1.2]{spec} (for the general case.)

\medskip

For (3), note that the hypothesis of Condition R on each $\Omega_j$  implies that for each non-negative integer $s$,
there are non-negative integers $m_j(s)$, $j=1,\dots,k$,  such that  $P_j$ is continuous from $W^{m_j(s)}(\Omega_j)$ to $W^s(\Omega_j)$.
If $m(s)= \max(m_1(s),\dots, m_k(s))$,  then each $P_j$  is continuous from $W^{m(s)}(\Omega_j)$ to $W^s(\Omega_j)$. It follows from (2) 
now that the Bergman projection $P$ is continuous from  the Partial Sobolev space  $\pss^{m(s)}(\Omega)$ 
to the Partial Sobolev space  $\pss^s(\Omega)$,  as defined in \eqref{eq-pssdef}.
It now follows from \eqref{eq-pssinc}  that the Bergman projection $P$ maps $W^{km(s)}(\Omega)$ to $W^s(\Omega)$ continuously 
for each non-negative integer $s$, and this  shows that $\Omega$ also satisfies Condition R.
\end{proof}

On a domain $\Omega\subset\cx^n$, let $\mathcal{H}^s(\Omega) = W^s(\Omega)\cap \mathcal{O}(\Omega)$,
and recall that $\mathcal{H}^\infty(\Omega)$ has been defined earlier as the space $\mathcal{C}^\infty(\overline{\Omega})\cap\mathcal{O}(\Omega)$
of holomorphic functions smooth up to the boundary.  A crucial ingredient in the proof of the Fefferman--Bell--Ligocka theorem
is the following fact.

\begin{result}[{\cite[Lemma~2]{B2} and \cite[Lemma~6.3.9]{CS}}]\label{res-bell}
If  $\Omega\Subset \cx^n$ has smooth boundary,
then for each integer $s\geq 0$, there is an integer $\nu(s)\geq 0$, and an operator $\Phi^s$ bounded from $W^{s+\nu(s)}(\Omega)$ to $W^s_0(\Omega)$
such that $P\Phi^s =P$, where $P$ denotes the Bergman projection on $\Omega$. If $\Omega$ satisfies Condition R (or more generally, if
the space $\mathcal{H}^\infty(\Omega)$  is dense in the Bergman space $\mathcal{H}(\Omega)$)
 then  $\Phi^s$ maps $\mathcal{H}^s(\Omega)$ to $W^s_0(\Omega)$.
\end{result}

\noindent Indeed, $\Phi^s$ is realized as a differential operator of degree $\nu(s)= s(s+1)/2$
with coefficients smooth up to the boundary on $\Omega$. For conciseness, for an integer $s\geq 0$,  let us refer to an operator with the properties
stated in the conclusion of the above result as a {\em Bell operator of order $s$.} We now note that this property is also inherited by products from factors:
\begin{lem} \label{lem-bellprod} Let $\Omega=\Omega_1\times\dots\times\Omega_k\subset\cx^n$  be a product of smoothly bounded domains 
$\Omega_j$. Then $\Omega$ admits a Bell operator. Further, this operator maps the space
 $\widetilde{\mathcal{H}}^s(\Omega)=\pss^s(\Omega)\cap\mathcal{O}(\Omega)$ to $\pss^s_0(\Omega)$.
\end{lem}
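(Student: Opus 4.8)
The plan is to build the Bell operator on $\Omega$ as a tensor product of the Bell operators on the factors $\Omega_j$, using the tensor-product formalism of Section~\ref{sec-functions}. First I would invoke Result~\ref{res-bell} on each smoothly bounded factor $\Omega_j$: for a fixed integer $s\geq 0$ there is $\nu_j(s)\geq 0$ and a bounded operator $\Phi^s_j : W^{s+\nu_j(s)}(\Omega_j)\to W^s_0(\Omega_j)$ with $P_j\Phi^s_j = P_j$. Replacing each $\nu_j(s)$ by $\nu(s) = \max_j \nu_j(s)$ (and composing with the continuous inclusion $W^{s+\nu(s)}(\Omega_j)\hookrightarrow W^{s+\nu_j(s)}(\Omega_j)$, which does not disturb $P_j\Phi^s_j=P_j$ since the inclusion is the identity on the common domain of definition), I may assume all the factor operators gain the \emph{same} number $\nu(s)$ of derivatives. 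Then I set
\[
\Phi^s = \Phi^s_1 \csor \dots \csor \Phi^s_k,
\]
the Hilbert tensor product of the $\Phi^s_j$, which by the construction recalled at the end of Section~\ref{sec-functions} is a bounded linear map between the corresponding Hilbert tensor products, i.e.
\[
\Phi^s : \widetilde{W}^{s+\nu(s)}(\Omega) = {\widehat{\bigotimes}}_{j=1}^k W^{s+\nu(s)}(\Omega_j) \longrightarrow {\widehat{\bigotimes}}_{j=1}^k W^s_0(\Omega_j) = \widetilde{W}^s_0(\Omega).
\]

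Next I would verify the identity $P\Phi^s = P$ on $\Omega$. By Lemma~\ref{lem-miscprod}(2) the Bergman projection on the product is $P = P_1\csor\dots\csor P_k$, and the composition of tensor-product operators is the tensor product of the compositions (this holds on the algebraic tensor product by the definition on elementary tensors $h_1\tensor\dots\tensor h_k$, and extends by continuity). Hence
\[
P\Phi^s = (P_1\Phi^s_1)\csor\dots\csor(P_k\Phi^s_k) = P_1\csor\dots\csor P_k = P,
\]
using $P_j\Phi^s_j = P_j$ for each $j$. Finally, to pass from the Partial Sobolev spaces to the genuine Sobolev spaces and conclude that $\Phi^s$ is a Bell operator in the sense defined before the lemma, I would chain the continuous inclusions \eqref{eq-pssinc} and \eqref{eq-pss0inc}: the inclusion $W^{k(s+\nu(s))}(\Omega)\hookrightarrow \widetilde{W}^{s+\nu(s)}(\Omega)$ followed by $\Phi^s$ followed by $\widetilde{W}^s_0(\Omega)\hookrightarrow W^s_0(\Omega)$ gives a bounded operator from $W^{k(s+\nu(s))}(\Omega)$ to $W^s_0(\Omega)$ with $P\Phi^s=P$; absorbing the factor $k$ into the loss of derivatives (so the effective order of the Bell operator on $\Omega$ is at most $k(s+\nu(s))-s$) shows $\Omega$ admits a Bell operator of every order $s$.

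For the last assertion, that $\Phi^s$ maps $\widetilde{\mathcal{H}}^s(\Omega)=\widetilde{W}^s(\Omega)\cap\mathcal{O}(\Omega)$ into $\widetilde{W}^s_0(\Omega)$: since each $\Omega_j$ is smoothly bounded and pseudoconvex (it is a factor of the pseudoconvex product, and in any case the hypothesis of Lemma~\ref{lem-miscprod} applies here too — I would note pseudoconvexity is what we need and it is available in the intended application), $\mathcal{H}^\infty(\Omega_j)$ is dense in $\mathcal{H}(\Omega_j)$, so by Result~\ref{res-bell} each $\Phi^s_j$ maps $\mathcal{H}^s(\Omega_j)$ into $W^s_0(\Omega_j)$. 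Then the tensor product $\Phi^s_1\csor\dots\csor\Phi^s_k$ maps $\widehat{\bigotimes}_j \mathcal{H}^s(\Omega_j)$ into $\widehat{\bigotimes}_j W^s_0(\Omega_j)=\widetilde{W}^s_0(\Omega)$; and by Lemma~\ref{lem-miscprod}(1) together with the definition of $\widetilde{W}^s$ one identifies $\widehat{\bigotimes}_j\mathcal{H}^s(\Omega_j)$ with a dense subspace of $\widetilde{\mathcal{H}}^s(\Omega)=\widetilde{W}^s(\Omega)\cap\mathcal{O}(\Omega)$, so the inclusion for all of $\widetilde{\mathcal{H}}^s(\Omega)$ follows by continuity of $\Phi^s$ and closedness of $\widetilde{W}^s_0(\Omega)$.

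The step I expect to be the main obstacle is the careful bookkeeping around the factor $k$ in the derivative loss and around identifying $\widehat{\bigotimes}_j \mathcal{H}^s(\Omega_j)$ as a (dense subspace of the) holomorphic part of the Partial Sobolev space $\widetilde{W}^s(\Omega)$ — one must be sure that the restriction-to-holomorphic-functions operation is compatible with Hilbert tensor products, which ultimately rests on Lemma~\ref{lem-miscprod}(1) and the results of \cite{prod}; everything else is a formal manipulation of tensor products of bounded operators. Note also that strictly speaking the ``Bell operator'' here loses $k$ times as many derivatives as on a single smooth factor, but since all we need downstream is the existence of a finite-order bounded operator $\Phi^s$ with $P\Phi^s=P$ for each $s$, this is harmless.
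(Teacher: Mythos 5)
Your proposal is correct and takes essentially the same route as the paper: the Bell operator on $\Omega$ is defined as the Hilbert tensor product $\Phi^s_1\csor\dots\csor\Phi^s_k$ of the factor Bell operators, the identity $P\Phi^s=P$ is checked factorwise via Lemma~\ref{lem-miscprod}(2), and the passage from the partial Sobolev spaces to the genuine ones uses the inclusions \eqref{eq-pssinc} and \eqref{eq-pss0inc} (the paper fixes $\nu(s)=s(s+1)/2$ uniformly, so your $\max_j\nu_j(s)$ normalization is unnecessary but harmless, and your effective loss $k(s+\nu(s))-s$ matches the paper's $N(s)=k\nu(s)+(k-1)s$). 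The only divergence is that you carefully justify the final assertion about $\widetilde{\mathcal{H}}^s(\Omega)$, which the paper dismisses as ``obvious''; your observation that this step tacitly needs density of $\mathcal{H}^\infty(\Omega_j)$ in $\mathcal{H}(\Omega_j)$ on each factor is a fair point that the paper glosses over.
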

\begin{proof} Let $s\geq 0$,  and let $\Phi^s_j:W^{\nu(s)+s}(\Omega_j)\to W^s_0(\Omega_j)$,  be the Bell operator on $\Omega_j$,
where $\nu(s)=s(s+1)/2$. 
Define an operator on functions on $\Omega$ by:
\[ \Phi^s= \Phi^s_1\csor \Phi^s_2\csor \dots\csor \Phi^s_k.\]
Then $\Phi^s$ is continuous from $\pss^{\nu(s)+s}(\Omega)$ to $\pss^s_0(\Omega)$, and therefore by \eqref{eq-pssinc} and \eqref{eq-pss0inc}
from $W^{N(s)+s}(\Omega)$ to $W^s_0(\Omega)$ where $N(s)= k\nu(s)+ (k-1)s$.  Also, denoting by $P_j$ the Bergman projection on $\Omega_j$ and 
by $P$ the Bergman projection on $\Omega$, we have
\begin{align*}
P\Phi^s&= (P_1\csor\dots\csor P_k)(\Phi^s_1\csor\dots\csor\Phi^s_k)\\
&=P_1\Phi^s_1\csor\dots\csor P_k\Phi^s_k\\
&=P_1\csor\dots\csor P_k\\
&=P.
\end{align*}
The last statement is obvious.
\end{proof}

\subsection{Estimates in Bergman spaces}
We now note that some well-known estimates on Bergman functions continue to hold on product domains. Denote by $\norm{\cdot}_s$ the norm of a function in the space $W^s$. 
We will need the Sobolev space of negative order $W^{-s}$.  Recall that this is the dual of $W^s_0$, and for a domain $D$, the negative Sobolev norm of a function $g$ on $D$ is
defined by

\[ \norm{g}_{-s}=\sup_{\substack{\phi\in\mathcal{C}^\infty_0(D)\\\norm{\phi}_s=1}}\abs{\int_D g\overline{\phi}}.\]

\noindent Now suppose that $D$ is a product of smoothly bounded domains $D_1 \times D_2 \times \ldots \times D_k$. We define $\pss^{-s}(D)$ 
(a partial Sobolev space of negative index) to be the dual of $\pss^s_0(D)$. The norm in $\pss^{-s}(D)$  is given by
\[ \norm{g}_{\widetilde{W}^{-s}(D)}
=\sup_{\substack{\phi\in\mathcal{C}^\infty_0(D)\\\norm{\phi}_{\pss^s(D)}=1}}\abs{\int_D g\overline{\phi}}. \]
The following properties are easy to see:

\begin{lem}(1) $\widetilde{W}^{-s}(D)= W^{-s}(D_1)\csor W^{-s}(D_2)\csor\dots\csor W^{-s}(D_k)$,\\
(2) $W^{-s}(D)\subsetneq \pss^{-s}(D) \subsetneq W^{-ks}(D)$
\end{lem}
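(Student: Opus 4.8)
The plan is to deduce both parts from the way duality interacts with Hilbert tensor products, together with the inclusions \eqref{eq-pss0inc}. Recall that by definition $\pss^{-s}(D)$ is the dual of $\pss^s_0(D)={\widehat{\bigotimes}}_{j=1}^kW^s_0(D_j)$, and that $W^{-s}(D_j)$ is by definition the dual of $W^s_0(D_j)$. So (1) will follow once we know that the dual of a Hilbert tensor product is the Hilbert tensor product of the duals, and (2) will follow by dualizing \eqref{eq-pss0inc}.

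For part (1), I would first record the elementary functional-analytic fact that for Hilbert spaces $\mathsf{H}_1,\dots,\mathsf{H}_k$ one has $\bigl({\widehat{\bigotimes}}_{j=1}^k\mathsf{H}_j\bigr)^*={\widehat{\bigotimes}}_{j=1}^k\mathsf{H}_j^*$ isometrically, with duality pairing determined on elementary tensors by
\[ \langle\lambda_1\tensor\dots\tensor\lambda_k,\;h_1\tensor\dots\tensor h_k\rangle=\prod_{j=1}^k\lambda_j(h_j). \]
This is immediate from the Riesz representation theorem applied on each factor and on the product: writing $\lambda_j=(\,\cdot\,,u_j)_{\mathsf{H}_j}$, the functional $\lambda_1\tensor\dots\tensor\lambda_k$ is represented by $u_1\tensor\dots\tensor u_k$, whose norm is $\prod_j\norm{u_j}=\prod_j\norm{\lambda_j}_{\mathsf{H}_j^*}$, and surjectivity follows because elementary tensors span a dense subspace. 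Taking $\mathsf{H}_j=W^s_0(D_j)$ yields $\pss^{-s}(D)={\widehat{\bigotimes}}_{j=1}^kW^{-s}(D_j)$ as Hilbert spaces, which is assertion (1). It remains to check that under this identification the dual pairing agrees with the integral pairing $\int_D g\overline{\phi}$ used in the text to define $\norm{\cdot}_{\widetilde W^{-s}(D)}$; but on an element $\psi=\phi_1\tensor\dots\tensor\phi_k$ with $\phi_j\in\mathcal{C}^\infty_0(D_j)$ — and hence, by linearity, on the algebraic tensor product $\bigotimes_{j=1}^k\mathcal{C}^\infty_0(D_j)$, which is dense in $\pss^s_0(D)$ since each $\mathcal{C}^\infty_0(D_j)$ is dense in $W^s_0(D_j)$ — this is just Fubini's theorem, $\int_D g\overline{\psi}=\prod_j\int_{D_j}g_j\overline{\phi_j}$. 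Hence the two pairings, and therefore the two norms, coincide.

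For part (2), I would dualize the chain of continuous inclusions $W^{ks}_0(D)\hookrightarrow\pss^s_0(D)\hookrightarrow W^s_0(D)$ provided by \eqref{eq-pss0inc}, each of which has dense range because $\bigotimes_{j=1}^k\mathcal{C}^\infty_0(D_j)$ lies densely in all three spaces. Restriction of functionals (equivalently, the natural inclusion of distributions on $D$) then produces continuous injections $W^{-s}(D)\hookrightarrow\pss^{-s}(D)\hookrightarrow W^{-ks}(D)$, the injectivity being a consequence of the density of the preduals; these are the inclusions in (2). To see that they are strict, I would argue by contradiction using the reflexivity of Hilbert spaces: if, say, $\pss^{-s}(D)\hookrightarrow W^{-ks}(D)$ were surjective, then by the open mapping theorem it would be a Banach-space isomorphism, so its adjoint — which under the canonical identifications $\pss^{-s}(D)^*\cong\pss^s_0(D)$ and $W^{-ks}(D)^*\cong W^{ks}_0(D)$ is precisely the inclusion $W^{ks}_0(D)\hookrightarrow\pss^s_0(D)$ — would also be surjective, contradicting the strictness in \eqref{eq-pss0inc}; the same argument applied to the second inclusion of \eqref{eq-pss0inc} shows that $W^{-s}(D)\hookrightarrow\pss^{-s}(D)$ is strict.

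Once the tensor-product duality is set up, the rest is formal, so the point demanding the most care — the ``main obstacle'' — is the norm identification at the end of part (1): one must make sure that the abstract duality pairing between ${\widehat{\bigotimes}}_{j}W^{-s}(D_j)$ and ${\widehat{\bigotimes}}_{j}W^s_0(D_j)$ genuinely restricts, on the dense subspace of finite sums of tensor products of test functions, to the concrete integral $\int_D g\overline{\phi}$, so that the supremum defining $\norm{\cdot}_{\widetilde W^{-s}(D)}$ in the text really does compute the Hilbert-tensor-product norm and not merely something equivalent to it.
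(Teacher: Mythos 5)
Your proposal is correct and follows essentially the same route as the paper, whose entire proof is the single sentence that the two assertions ``follow on taking duals'': you have simply supplied the details (duality of Hilbert tensor products for (1), dualization of the dense continuous inclusions in \eqref{eq-pss0inc} plus a reflexivity argument for strictness in (2)). The only point worth noting is that for part (2) the relevant chain to dualize is \eqref{eq-pss0inc} (as you correctly use), since $W^{-s}$ is the dual of $W^s_0$, not of $W^s$.
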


\begin{proof}
These follow on taking duals in the relations \eqref{eq-pss0def} and \eqref{eq-pssinc} respectively.
\end{proof}

\noindent The following is an analog of well-known facts for smoothly bounded domains (see \cite{B2}). Related duality issues on domains with nondegenerate corners have been studied by 
Barrett in \cite{Ba}.

\begin{lem}\label{lem-prodest}
Let $D\subset\cx^n$ be a domain represented as $D_1\times D_2 \times \ldots \times D_k$, where 
each $D_k$ is smoothly bounded and satisfies Condition R. Then 
\begin{enumerate}
\item   if $v,g\in \mathcal{H}(D)$,  we have
for each integer $s\geq 0$:
\[ \abs{\int_D v \overline{g}}\leq C \norm{v}_{ks}\norm{g}_{-s}.\]
\item If $g\in \mathcal{H}(D)$, then for  $s>n$ we have
\begin{equation}\label{eq-est1}
C_1\norm{g}_{-s-n-1}\leq \sup_{z\in D}\left( \abs{g(z)}d(z)^s\right)\leq C_2\norm{g}_{-s+n},
\end{equation}
where $C_1,C_2$ are constants independent of $g$, and $d(z)=\dist(z,\partial D)$. 
\end{enumerate}
\end{lem}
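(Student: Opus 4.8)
The idea is to reduce everything to the well-known one-factor estimates by using the Hilbert tensor product structure of all the spaces involved, together with Lemma~\ref{lem-miscprod} and Lemma~\ref{lem-bellprod}. For part (1), I would first recall the corresponding statement on a smoothly bounded domain: if $\Omega\Subset\cx^n$ is smoothly bounded and satisfies Condition R, then for $v,g\in\mathcal H(\Omega)$ one has $|\int_\Omega v\overline g|\le C\norm{v}_s\norm{g}_{-s}$. The standard proof uses the Bell operator $\Phi^s$: write $\int_\Omega v\overline g=(v,g)_{L^2}=(v,Pg)_{L^2}=(v,P\Phi^s g)_{L^2}=(Pv,\Phi^s g)_{L^2}=(v,\Phi^s g)_{L^2}$, and since $\Phi^s g\in W^s_0(\Omega)$ with $\norm{\Phi^s g}_s\lesssim\norm g_{s}$... but here $g$ is only measured in negative norm, so one instead pairs $v$ against $\Phi^s g$ after noting $\Phi^s$ maps $\mathcal H(\Omega)$ into $W^s_0$ and is bounded $W^{s+\nu(s)}\to W^s_0$; the duality $W^{-s}$ vs $W^s_0$ then gives the bound with $\norm v$ in a higher Sobolev norm. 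For the product case I would run exactly this argument with $\widetilde W$-spaces in place of $W$-spaces: use $P=P_1\csor\cdots\csor P_k$ and the product Bell operator $\Phi^s=\Phi^s_1\csor\cdots\csor\Phi^s_k$ of Lemma~\ref{lem-bellprod}, which satisfies $P\Phi^s=P$ and maps $\widetilde{\mathcal H}^s(D)$ to $\pss^s_0(D)$. This yields $|\int_D v\overline g|\le C\norm{v}_{\pss^{s+\nu(s)}(D)}\norm{g}_{\pss^{-s}(D)}$, and then the inclusions \eqref{eq-pssinc} and the dual inclusion $W^{-s}(D)\subset\pss^{-s}(D)$ from the preceding lemma convert this into the stated inequality with plain Sobolev norms $\norm v_{ks}$ and $\norm g_{-s}$ (adjusting $s$ by the fixed shift $\nu(s)$, absorbed into $C$ and the index as written).

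\textbf{Part (2): the pointwise-size estimate.} Here I would again start from the classical one-variable fact: for $g\in\mathcal H(\Omega)$ on a smoothly bounded $\Omega\subset\cx^n$ and $s>n$, one has $C_1\norm g_{-s-n-1}\le\sup_\Omega(|g(z)|d(z)^s)\le C_2\norm g_{-s+n}$. The upper bound comes from the subaverage/reproducing property: $|g(z)|\le C d(z)^{-n}\norm g_{L^2(B(z,d(z)/2))}$ by the mean value inequality for holomorphic functions, hence $|g(z)|d(z)^s\le C d(z)^{s-n}\norm g_{L^2}$, and then interpolating the $L^2$ norm on the shrinking ball against a fixed bump gives the $W^{-s+n}$ bound (test functions supported near $z$, scaled, have controlled $W^{s-n}$ norm since $s-n>0$). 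The lower bound: given a test function $\phi\in\mathcal C^\infty_0(D)$ with $\norm\phi_{s+n+1}=1$, estimate $|\int_D g\overline\phi|$ by pulling absolute values inside and bounding $|g(z)|\le(\sup|g|d^s)\,d(z)^{-s}$, so $|\int g\overline\phi|\le(\sup|g|d^s)\int_D|\phi(z)|d(z)^{-s}dz$, and the last integral is finite and controlled by $\norm\phi_{s+n+1}$ because a $\mathcal C^\infty_0$ function vanishing to high order at $\partial D$ (which follows from a high Sobolev norm bound via Hardy-type/trace estimates) makes $|\phi(z)|d(z)^{-s}$ integrable. For the product domain $D=D_1\times\cdots\times D_k$, I expect $d(z)=\dist(z,\partial D)$ to be comparable to $\min_j d_j(z_j)$ where $d_j(z_j)=\dist(z_j,\partial D_j)$ — this is elementary for a product of domains — and the holomorphic mean value inequality localizes to polydiscs, so the same argument goes through verbatim with $D$ smoothly bounded replaced by ``$D$ a product of smoothly bounded domains,'' using that $\mathcal H(D)=\csor_j\mathcal H(D_j)$.

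\textbf{Main obstacle.} The routine parts are the mean value inequality and the duality bookkeeping; the genuinely delicate point is the \emph{lower bound} in \eqref{eq-est1} on the product domain, specifically controlling $\int_D|\phi(z)|d(z)^{-s}\,dz$ by $\norm\phi_{\widetilde W^{s+n+1}}$ or $\norm\phi_{s+n+1}$ when $d$ degenerates like $\min_j d_j$ — near the ``corner'' $\partial D_1\times\cdots\times\partial D_k$, one must be sure that the high-order vanishing of $\phi$ forced by a Sobolev-norm bound is enough to beat the singular weight $d(z)^{-s}=(\min_j d_j(z_j))^{-s}$; this requires a Hardy-inequality-type argument adapted to the product (possibly factor-by-factor, using the tensor structure of $\pss^s_0$), which is exactly why the partial Sobolev spaces are introduced. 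I would handle it by first doing the estimate on each factor $D_j$ — where $\int_{D_j}|\psi(z_j)|d_j(z_j)^{-s}dz_j\le C\norm\psi_{W^{s+n_j+1}(D_j)}$ is the classical one-variable/one-factor statement — and then tensoring, using $\min_j d_j \ge \prod$-type elementary inequalities only after splitting $D$ into the regions where one particular $d_j$ is smallest; on each such region the weight is dominated by a single-factor weight and the one-factor estimate applies, with the remaining factors contributing bounded $L^1$ integrals against their own high Sobolev norms. The constants and the precise shift in the Sobolev index (the $n+1$, the $\pm n$) are chosen exactly to make this splitting work, and I would not belabor their optimality.
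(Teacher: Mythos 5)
Your part (1) is in substance the paper's own argument: reduce to $v,g\in\mathcal H^\infty(D)$ by density (Condition R passes to the product by Lemma~\ref{lem-miscprod}), insert the product Bell operator of Lemma~\ref{lem-bellprod}, use the duality of $\pss^s_0(D)$ with $\pss^{-s}(D)$, and finish with the inclusions $W^{ks}(D)\subset\pss^s(D)$ and $W^{-s}(D)\subset\pss^{-s}(D)$. Two small corrections. First, your displayed chain moves $\Phi^s$ onto $g$, ending with $(v,g)=(v,\Phi^s g)$, which puts the norms on the wrong sides; what your final inequality actually requires (and what the paper writes) is the companion identity $(v,g)=(\Phi^s v,g)$, so that $\Phi^s v\in\pss^s_0(D)$ is paired against $g\in\pss^{-s}(D)$. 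Second, to land on $\norm{v}_{ks}$ rather than $\norm{v}_{k(s+\nu(s))}$ you must use the refinement you mention only in passing, namely that on holomorphic functions $\Phi^s$ maps $\widetilde{\mathcal H}^s(D)$ boundedly into $\pss^s_0(D)$ with no loss of derivatives (the last assertion of Lemma~\ref{lem-bellprod}); the paper uses exactly this to write $\norm{\Phi^s v}_{\pss^s}\le C\norm{v}_{\pss^s}$, whereas the crude boundedness $\pss^{s+\nu(s)}\to\pss^s_0$ that you invoke shifts the index.

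For part (2) the paper gives no details (``carries over word-by-word''), and your upper bound via the mean value property and a scaled bump is the standard one. But the ``main obstacle'' you single out in the lower bound is not actually there, and the Hardy-type, region-by-region splitting you propose is an unnecessary detour (it would also force mixed-norm bookkeeping over the remaining factors and perturb the index $s+n+1$). The direct argument is purely pointwise and completely insensitive to the corner: a test function $\phi\in\mathcal C^\infty_0(D)$ extends by zero to an element of $\mathcal C^\infty_0(\rl^{2n})$, Sobolev embedding on all of $\rl^{2n}$ gives $\norm{\phi}_{\mathcal C^{s}}\le C\norm{\phi}_{s+n+1}$, and Taylor expansion at a nearest boundary point $\zeta(z)$, where $\phi$ and all its derivatives up to order $s$ vanish, yields $\abs{\phi(z)}\le C\norm{\phi}_{s+n+1}\,d(z)^{s}$ for every $z\in D$, whatever the local structure of $\partial D$ at $\zeta(z)$. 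Hence $\int_D\abs{\phi}\,d^{-s}\le C\,\abs{D}\,\norm{\phi}_{s+n+1}$ with no splitting and no Hardy inequality; the degeneration $d=\min_j d_j$ at the corner is irrelevant because the estimate is against the actual distance to the nearest boundary point. This is precisely why the paper can dispose of (2) in one sentence.
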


\begin{proof} By Lemma~\ref{lem-miscprod}, the domain $D$ satisfies Condition R, and  therefore, functions in
 $\mathcal{H}^\infty(D)$ are dense in the Bergman space $\mathcal{H}(D)$. Therefore, 
 it suffices to prove the result when $v$ and $g$ are in $\mathcal{H}^\infty(D)$.
By Lemma~\ref{lem-bellprod}, there is a Bell operator $\Phi^s$  which maps $W^{s+N(s)}(D)$ to
$W^s_0(D)$, satisfies $P\Phi^s=P$, and which maps $\widetilde{\mathcal{H}}^s(D)$ to $\pss^s_0(D)$.  We have
\begin{align*}\abs{\langle v, g\rangle }&\leq \abs{\langle \Phi^s v, g \rangle}\\
&\leq \norm{\Phi^s v}_{\widetilde{W}^s(D)} \norm{g}_{\widetilde{W}^{-s}(D)}\\
&\leq C \norm{v}_{\widetilde{W}^s(D)} \norm{g}_{\widetilde{W}^{-s}(D)}.
\end{align*}
Since the inclusions $W^{ks}(D)\subset \pss^s(D)$ and $W^{-s}(D)\subset \pss^{-s}(D)$ are continuous,
the result  (1) follows.

\medskip

For the inequalities in \eqref{eq-est1}, we note that the proof in the standard smoothly bounded situation 
carries over word-by-word for product domains. We recall that the Sobolev embedding theorem continues to hold 
in the Lipschitz domain $D$.
\end{proof}

\section{Estimates on the distance to the boundary for proper mappings}
\label{sec-estimates}
\begin{prop}\label{prop-dist}Let $D$ and $G$ be domains which may each be represented as a product of smoothly bounded
pseudoconvex domains. If $f:D\to G$ is a proper holomorphic map, then there is a $C>0$ and an $0<\eta<1$ such that for $z\in D$, we have
\[ \frac{1}{C}\dist(z,\partial D)^{\frac{1}{\eta}}\leq\dist(f(z), \partial G)\leq C \dist(z,\partial D)^\eta.\]
\end{prop}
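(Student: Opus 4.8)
The plan is to reduce both inequalities to two ingredients: (a) the transformation formula for Bergman kernels under a proper map, and (b) the two-sided estimate \eqref{eq-est1} relating the sup of $|g(z)|\dist(z,\partial D)^s$ to negative Sobolev norms, together with the fact (Lemma~\ref{lem-miscprod}) that each of $D$ and $G$ satisfies Condition~R. For the upper bound $\dist(f(z),\partial G)\le C\dist(z,\partial D)^\eta$, I would first observe that since each factor $G^j$ is a smoothly bounded pseudoconvex domain, a standard localization plus the existence of a bounded plurisubharmonic exhaustion (Diederich--Fornaess) gives a function $\lambda_j$ on $G^j$, plurisubharmonic, comparable to $-\dist(\cdot,\partial G^j)^{\alpha_j}$ near the boundary for some $0<\alpha_j<1$. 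Summing pullbacks of the $-(-\lambda_j)$ type functions along the factor projections produces a negative plurisubharmonic function $\psi$ on $G$ with $\psi(w)\le -c\,\dist(w,\partial G)^\alpha$ near $\partial G$ (with $\alpha=\max\alpha_j$, say), and $|\psi|$ bounded. Then $\psi\circ f$ is negative plurisubharmonic on $D$, bounded, hence (Hopf-lemma-type estimate, valid on the smoothly bounded factors and transferred to the product) $|\psi\circ f(z)|\le C\dist(z,\partial D)$, again using the smoothness of the factors. Combining, $\dist(f(z),\partial G)^\alpha\le C'\dist(z,\partial D)$, which is the upper bound with $\eta=1/\alpha$ after relabeling. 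Alternatively one can run the classical argument of Diederich--Fornaess/Bell directly: the key is that $-(-\delta_G)^\alpha\circ f$ is psh on $D$ and vanishes at worst linearly at $\partial D$.

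For the lower bound $\dist(f(z),\partial G)\le C\dist(z,\partial D)^{1/\eta}$, i.e.\ $\dist(z,\partial D)\le C\dist(f(z),\partial G)^{\eta}$ — equivalently a lower bound on $\dist(f(z),\partial G)$ in terms of a power of $\dist(z,\partial D)$ — I would symmetrize: one wants to play the same game with $f^{-1}$, but $f$ need not be invertible. Here the standard device is to use the Jacobian. Let $u=\det f'$; since $f$ is proper and equidimensional, $u\in\mathcal H(D)$, and pulling back the Bergman kernel of $G$ along $f$ and using the transformation rule one gets, for the Bergman kernels, $K_D(z,\bar z)\ge |u(z)|^2 K_G(f(z),\overline{f(z)})$ (this is Bell's inequality, coming from the fact that $u\cdot(h\circ f)$ for $h\in\mathcal H(G)$ lands in $\mathcal H(D)$ and the map is norm-nonincreasing up to the sheeting number). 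On a smoothly bounded pseudoconvex domain, $K(w,\bar w)\gtrsim \dist(w,\partial\cdot)^{-2}$ and $\lesssim \dist(w,\partial\cdot)^{-(2n+2)}$ (the latter from \eqref{eq-est1} applied to the Bergman kernel in one slot; these estimates persist on product domains by Lemma~\ref{lem-prodest}). Feeding these in gives $\dist(z,\partial D)^{-(2n+2)}\gtrsim |u(z)|^2\,\dist(f(z),\partial G)^{-2}$, so $\dist(f(z),\partial G)\gtrsim |u(z)|\,\dist(z,\partial D)^{n+1}$. It remains to bound $|u(z)|=|\det f'(z)|$ from below by a power of $\dist(z,\partial D)$; but $u\in\mathcal H(D)$ is not identically zero, so by the one-sided direction of \eqref{eq-est1}, or more simply by a subharmonicity/mean-value argument on balls of radius $\sim\dist(z,\partial D)$ centered at $z$ together with properness preventing $u$ from decaying faster than polynomially, one gets $|u(z)|\ge c\,\dist(z,\partial D)^{M}$ for some $M$. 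This yields the lower bound with a suitable $\eta$.

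The main obstacle I anticipate is the last sub-step of the lower bound: obtaining $|\det f'(z)|\ge c\,\dist(z,\partial D)^M$. For a single smoothly bounded domain this follows from Bell's work (the Jacobian cannot vanish to infinite order at the boundary, essentially because $f$ extends smoothly and is proper, so $\det f'$ is a nonzero smooth function on $\overline D$ with boundary zero set of finite order), but in our setting we do not yet have smooth extension — that is the conclusion of the theorem, not an available hypothesis — so I must argue with the polynomial-type bounds \eqref{eq-est1} only. The cleanest route is: $u=\det f'\in\mathcal H(D)\subset\mathcal H^2(D)$ after multiplying by a cutoff? No — rather, pick any point $z_0\in D$ with $u(z_0)\ne0$ (exists since $f$ is proper hence generically locally biholomorphic), and use the Harnack-type lower bound for $\log|u|$, which is plurisubharmonic: along a path from $z_0$ to $z$ staying at distance $\gtrsim\dist(\cdot,\partial D)$, the drop of $\log|u|$ is controlled by $\log(1/\dist(z,\partial D))$ times a constant depending on the (bounded) geometry of $D$, giving exactly the desired polynomial lower bound. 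Making this Harnack chain argument precise on a product of Lipschitz domains, controlling the number of chain steps by $\log(1/\dist(z,\partial D))$, is the technical heart; everything else is a recombination of results already stated.
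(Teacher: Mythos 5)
Your proposal has a genuine gap in each half of the inequality.

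For the right half, $\dist(f(z),\partial G)\le C\dist(z,\partial D)^\eta$, the step you invoke --- that the bounded plurisubharmonic exhaustion $\psi$ of $G$ pulled back by $f$ satisfies $\abs{\psi\circ f(z)}\le C\dist(z,\partial D)$ --- is not a Hopf-lemma-type estimate: the Hopf lemma for bounded plurisubharmonic exhaustions (Proposition~\ref{prop-hopf}) gives the \emph{reverse} inequality $\abs{\psi\circ f(z)}\ge C\dist(z,\partial D)$. With the direction corrected, your argument proves the \emph{left} half of the proposition, and this is exactly how the paper proves that half (pull back the Diederich--Fornaess exhaustion $\mu$ of $G$, apply the Hopf lemma to $\mu\circ f$ on $D$, and use $\abs{\mu(w)}\le\ell\,\dist(w,\partial G)^\theta$). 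As written, your intermediate claim amounts to $\dist(f(z),\partial G)\lesssim\dist(z,\partial D)^{1/\alpha}$ with $1/\alpha>1$, which is false already for a biholomorphism of balls. The upper bound genuinely requires transporting information from $D$ to $G$, and the device the paper uses for this --- absent from your proposal --- is the family of local branches $F_1,\dots,F_m$ of $f^{-1}$ on $G\setminus Z$: one takes the Diederich--Fornaess exhaustion $\lambda$ of $D$ with $\abs{\lambda}\le k\,\dist(\cdot,\partial D)^\eta$, forms $\psi=\max_j\lambda\circ F_j$, extends it across $Z$ to a bounded plurisubharmonic exhaustion of $G$, and applies the Hopf lemma \emph{on $G$} to obtain $\abs{\lambda(z)}\ge\abs{\psi(f(z))}\ge C\dist(f(z),\partial G)$.

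For the left half, your Bergman-kernel route founders on the sub-step $\abs{\det f'(z)}\ge c\,\dist(z,\partial D)^M$, which you correctly identify as the technical heart but which is in fact false whenever $f$ is not locally injective: $\det f'$ then vanishes on a nonempty hypersurface in the interior of $D$, where $\dist(z,\partial D)$ is bounded below. No Harnack-chain argument for $\log\abs{\det f'}$ can repair this, since that function is merely plurisubharmonic and equals $-\infty$ on the branch locus; Harnack inequalities require positive harmonic functions. Fortunately this half needs no Jacobian at all --- it is precisely the pullback-plus-Hopf argument you wrote under the heading of the upper bound, once the inequality is turned the right way around. A final caution: the Hopf lemma itself is not free on a product domain, whose boundary is only Lipschitz; the paper devotes a separate argument (``rolling analytic discs,'' Theorem~\ref{thm-hopflemma} together with Lemmas~\ref{lem-smoothrolling} and \ref{lem-productrolling}) to establishing it, and your proposal should not treat it as an off-the-shelf fact.
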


\noindent The proof is based on the application of a version of the Hopf lemma to bounded \psh exhaustion functions.
Let $\Omega$ be a domain in $\cx^n$. Recall that a continuous \psh function $\psi<0$ on $\Omega$ is referred to as a 
{\em bounded \psh exhaustion function} if for every $\epsilon>0$, the set $\{\psi < -\epsilon\}$ is relatively compact in $\Omega$. It follows
that such a function $\psi$  extends to a continuous  function on $\overline{\Omega}$ which vanishes on the boundary $\partial\Omega$.
The classical Hopf lemma needs at least $\mathcal{C}^2$ smoothness of the boundary, and applies to subharmonic functions (see \cite{hopf}.)
We will need the following version of the Hopf lemma, which applies to   {\em \psh} functions on product domains:
\begin{prop}\label{prop-hopf}
If  $\Omega$ is a domain which is the product of smoothly bounded domains, then for every continuous bounded  plurisubharmonic exhaustion
$\psi$ of $\Omega$, there is a constant $C>0$ such that we have
\[\abs{\psi(z)}\geq C \,\dist(z,\partial\Omega).\]
\end{prop}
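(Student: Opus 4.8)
The plan is to derive Proposition~\ref{prop-hopf} from the classical one–variable Hopf lemma, applied to the restriction of $\psi$ to a carefully chosen complex line through the given point. Throughout I use that $\psi$, being plurisubharmonic, is subharmonic on $\Omega\subset\rl^{2n}$ and restricts to a subharmonic function on every complex line, and that (as observed just above) $\psi$ extends continuously to $\overline\Omega$ with $\psi\equiv 0$ on $\partial\Omega$ and $\psi<0$ on $\Omega$. Write $\Omega=\Omega_1\times\cdots\times\Omega_k$; then $\dist(z,\partial\Omega)=\min_j\delta_j(z_j)$ with $\delta_j:=\dist(\cdot,\partial\Omega_j)$, and, since each $\Omega_j$ is smoothly bounded, there is a collar $\{\delta_j<\tau\}$ on which $\delta_j$ is $\mathcal C^\infty$ with $|\nabla\delta_j|\equiv1$ and on which $\partial\Omega_j$ has curvature and ``flatness scale'' bounded by constants depending only on $\Omega_j$. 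As a first step I would dispose of the interior: on the compact set $\{\dist(\cdot,\partial\Omega)\ge\varepsilon_0\}$ the continuous function $-\psi$ has a positive minimum while $\dist(\cdot,\partial\Omega)$ is bounded, so the desired inequality holds there automatically; it therefore suffices to produce $\varepsilon_0>0$ and $C>0$, depending only on the $\Omega_j$, with $-\psi(z)\ge C\,d$ whenever $d:=\dist(z,\partial\Omega)<\varepsilon_0$.

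So fix such a $z$, put $S=\{j:\delta_j(z_j)<\varepsilon_0\}$, and for $j\in S$ let $p_j\in\partial\Omega_j$ be the point nearest $z_j$ and $\nu_j$ the outward unit normal at $p_j$. I would take $\xi=(\xi_1,\dots,\xi_k)$ with $\xi_j=\nu_j$ for $j\in S$ and $\xi_j=0$ otherwise, and look at the complex line $\ell(\lambda)=z+\lambda\xi$ and the open set $\mathcal U=\{\lambda\in\cx:\ell(\lambda)\in\Omega\}=\bigcap_{j\in S}V_j$, where $V_j=\{\lambda:z_j+\lambda\nu_j\in\Omega_j\}$. Then $g:=\psi\circ\ell$ is subharmonic on $\mathcal U$, continuous on $\overline{\mathcal U}$, vanishes on $\partial\mathcal U$, and $g(0)=\psi(z)$. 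The three facts I would establish, all with constants depending only on the $\Omega_j$, are: (i) $\dist(0,\partial\mathcal U)=d$, attained at $\lambda=d\in\rl$; (ii) $\mathcal U$ contains a disc $\{|\lambda-(d-R_0)|<R_0\}$, tangent to $\partial\mathcal U$ at $\lambda=d$ and containing $0$, of some radius $R_0>0$; and (iii) the circle $|\lambda-(d-R_0)|=\tfrac12R_0$ (which lies inside $\mathcal U$ and encircles $0$) consists of parameters $\lambda$ for which $\ell(\lambda)$ stays in a fixed compact subset $K\Subset\Omega$, so that $-g\ge\beta_0:=\min_K(-\psi)>0$ on it. Granting (i)--(iii), I would run the classical interior–disc Hopf barrier argument for $g$ on the annulus $\tfrac12R_0<|\lambda-(d-R_0)|<R_0$ — comparing $g$ with $\varepsilon\big(e^{-\alpha|\lambda-(d-R_0)|^2}-e^{-\alpha R_0^2}\big)$ for a fixed $\alpha$ making the barrier subharmonic there and a fixed $\varepsilon$ coming from $\beta_0$ — and evaluate at $\lambda=0$: since the barrier vanishes to first order at the outer circle, this gives $-\psi(z)=-g(0)\ge C\,d$ with $C$ depending only on $R_0$ and $\beta_0$, hence only on the $\Omega_j$; together with the first step this proves the proposition.

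It then remains to check (i)--(iii). Fact (i) is immediate: $\partial\mathcal U\subset\bigcup_{j\in S}\partial V_j$, each $\partial V_j$ is at distance exactly $\delta_j(z_j)\ (\ge d)$ from $0$, and the value $d$ is attained at $\lambda=d$ for any $j$ realising the minimum $\delta_j(z_j)=d$. Fact (iii) holds because for $j\notin S$ the coordinate $z_j$ already lies at depth $\ge\varepsilon_0$ in $\Omega_j$, while for $j\in S$ the local flatness of $\partial\Omega_j$ near $p_j$ gives $\dist(z_j+\lambda\nu_j,\partial\Omega_j)\ge\dist(\lambda,\partial\mathcal U)-O(R_0^2)$, and $\dist(\lambda,\partial\mathcal U)\ge\tfrac12R_0$ on the circle in question (as the disc of (ii) lies in $\mathcal U$); the error $O(R_0^2)$ is negligible once $\varepsilon_0$ (hence $R_0$) is small.

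The crux — and the step I expect to be the real obstacle — is fact (ii): even though $\mathcal U$ is an \emph{intersection} of the regions $V_j$, it has an interior disc whose size is bounded below \emph{independently of} $z$. The reason is that every $V_j$ whose boundary passes through $\lambda=d$ does so with the \emph{same}, vertical, tangent line there: because $\nu_j$ is the \emph{real} normal to $\partial\Omega_j$ at $p_j$, the $\rl$-gradient of $\lambda\mapsto\rho_j(z_j+\lambda\nu_j)$ (for any $\mathcal C^\infty$ defining function $\rho_j$) at $\lambda=\delta_j(z_j)$ points in the real direction, so $\partial V_j$ is vertical there and $V_j$ lies locally to its left; and the curvature of $\partial V_j$ at that point is controlled by that of $\partial\Omega_j$, uniformly. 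Hence, for $\varepsilon_0$ small, every $V_j$ $(j\in S)$ contains a fixed parabolic neighbourhood $\{\Re\lambda<d-M(\Im\lambda)^2\}$ of $\lambda=d$, with $M$ depending only on the $\Omega_j$, and therefore the common disc $\{|\lambda-(d-R_0)|<R_0\}$ with $R_0=\min\big(1/(2M),\text{flatness scale}\big)$. It is precisely in the uniformity of $M$ and of the flatness scale, over all $z$ near $\partial\Omega$, that the smooth boundedness of the factors enters.
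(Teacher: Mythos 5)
Your argument is correct in substance and rests on the same mechanism as the paper's proof --- a Hopf-type estimate for the restriction of $\psi$ to an affine analytic slice that is tangent to $\partial\Omega$ at the nearest boundary point, has uniformly controlled size, and meets a fixed compact subset of $\Omega$ on a definite portion --- but you handle the product structure in a genuinely different way. The paper (Lemmas~\ref{lem-smoothrolling} and \ref{lem-productrolling} feeding into Theorem~\ref{thm-hopflemma}) takes the disc entirely inside the \emph{single} factor realizing $\dist(z,\partial\Omega)=\min_j\delta_j(z_j)$, i.e.\ $\alpha_z(\lambda)=(z_1,\alpha_{z_2}(\lambda))$; the parameter domain is then exactly the unit disc, the product case is inherited from the smooth case with no extra geometry, and the final estimate is read off from the Poisson kernel (the harmonic majorant satisfies $h_z(\lambda)\le \tfrac12 h_z(0)(1-\abs{\lambda})$) together with a lower bound on $\abs{h_z(0)}$ from the angular-measure condition (D4). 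You instead slice along the diagonal direction $\xi=(\nu_j)_{j\in S}$, which forces you to work in the intersection $\mathcal U=\bigcap_{j\in S}V_j$ and to prove your fact (ii), the uniform interior tangent disc for that intersection; this is the hardest step of your proof and is precisely the work the paper's choice of direction avoids --- indeed, if you set $\xi_j=\nu_j$ only for one index realizing the minimum and $\xi_j=0$ otherwise, your facts (i)--(iii) collapse to the one-factor case and you recover the paper's construction. Your fact (ii) is nonetheless provable as you sketch: each $\partial V_j$ crosses the real axis at $\lambda=\delta_j(z_j)\ge d$ with vertical tangent, and the second-order Taylor expansion of a defining function $\rho_j$ about $p_j$ (valid since the whole disc of radius $2R_0$ about points at depth at most $\varepsilon_0$ stays in the collar) shows $V_j\supset\{\abs{\lambda-(d-R_0)}<R_0\}$ once $R_0$ is small compared with $\min_{\partial\Omega_j}\abs{\nabla\rho_j}$ divided by the $\mathcal{C}^2$-bounds. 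One formulation in fact (iii) needs repair: restricting to the line only \emph{increases} distance to the boundary, so $\dist(z_j+\lambda\nu_j,\partial\Omega_j)\le\dist(\lambda,\partial V_j)$ and the inequality you wrote does not follow from slicing; the uniform lower bound you need on the half-radius circle should instead be extracted from the same Taylor expansion used for (ii), which gives $-\rho_j(z_j+\lambda\nu_j)$ bounded below by a fixed multiple of $R_0$ there. With that adjustment the exponential barrier on the annulus closes the argument with constants depending only on the factors, as required.
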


\noindent We postpone the proof of  Proposition~\ref{prop-hopf}, but now prove  Proposition~\ref{prop-dist} using it:

\begin{proof}[Proof of Proposition~\ref{prop-dist}] 
According to a theorem of Diederich and Fornaess \cite{difo}, if  the pseudoconvex domain 
$\Omega$ has $\mathcal{C}^2$ boundary and is bounded,
there is a smooth defining function $\rho$
of $\Omega$, and $\eta$ with $0<\eta< 1$ such that $\psi= - (-\rho)^\eta$ is  a bounded \psh exhaustion. If 
$\Omega= \Omega_1\times\Omega_2\times \dots \Omega_k$ is a product of smoothly bounded pseudoconvex domains,  and $\psi_j= -(-\rho_j)^{\eta_j}$
is the Diederich-Fornaess bounded \psh exhaustion on $\Omega_j$, we can define a continuous  bounded \psh exhaustion $\lambda$
on the product $\Omega$, by setting for a point $z=(z_{(1)}, \dots,z_{(k)})$,  where $z_{(j)}\in \Omega_j$,

\begin{equation}\label{eq-newlambda}\lambda(z) = \max_{1\leq  j\leq k} \psi_j(z_{(j)}).\end{equation} 

\noindent Note that by definition, for each $z\in \Omega$,
\begin{align}
\abs{\lambda(z)}&= -\lambda(z)\nonumber\\
&= \min_{1\leq j \leq k} (-\rho_j(z))^{\eta_j}\nonumber\\
&\leq C \dist(z,\partial\Omega)^\eta,\label{eq-bddpsh}
\end{align}
where $\eta$ is the minimum of the $\eta_j, j=1,\dots, k$. Applying  \eqref{eq-bddpsh} to the product domain $D$, we  obtain a \psh exhaustion function $\lambda$ such that
\begin{equation}\label{eq-lambda1} \abs{\lambda(z)}<  k \,\dist(z,\partial D)^\eta\end{equation}
for some constant $k>1$. Let $F_1, F_2, \ldots, F_m$ be the branches of $f^{-1}$ which are locally well defined holomorphic functions on $G \setminus Z$, where
$Z = \{f(z) : \det f'(z) = 0\} \subset G$ is a codimension one subvariety. Then
\[
\psi = \max \{ \lambda \circ F_j : 1 \le j \le m \}
\]
is a bounded continuous plurisubharmonic function on $G \setminus Z$ which admits an extension as a plurisubharmonic exhaustion of $G$; we will retain $\psi$ as the notation 
for this extension. Therefore, for each $1 \le j \le m$ and $w \in G$, we have
\[
- \lambda \circ F_j(w)  \ge -\psi(w)  = \vert \psi(w) \vert \ge C\;{\rm dist}(w, \partial G)
\]
where the last inequality follows from Proposition~\ref{prop-hopf}. This can be rewritten as
\[
\vert \lambda(z) \vert = -\lambda(z) \ge C\;{\rm dist}(f(z), \partial G)
\]
for all $z \in D$. Combining this with \eqref{eq-bddpsh}, the right-half of the result follows.

\medskip

For the left half, we  construct as before a bounded \psh exhaustion $\mu$ of the product domain $G$
satisfying 
\[ \abs{\mu(w) } \leq \ell \; \dist(w,\partial G)^\theta\]
with appropriate positive constants $\ell, \theta$. Then $\phi = \mu\circ f$ is a \psh exhaustion of the domain $D$, and we can apply the Hopf lemma
to $\phi$ to conclude that 
\[ \abs{\mu(f(z))}\geq C \; \dist(z,\partial D)\]
Combining the two estimates, the left-half of the result follows.
\end{proof}

\subsection{Rolling analytic discs }\label{sec-rolling} We now want to prove Proposition~\ref{prop-hopf},
and we will do so in a slightly more general context than required by our application.
The requirement of $\mathcal{C}^2$-smoothness of the boundary in the classical Hopf Lemma arises since
we need to roll a ball of fixed radius in the domain on the boundary with the ball in the domain, in such a way that every point 
of the boundary is touched by the ball. In the case of plurisubharmonic functions, we can replace the ball by an affine analytic disc,
i.e., the image in $\cx^n$ of the closed unit disc $\overline{\D}=\{\abs{\lambda}\leq 1\}\subset \cx$ under a map of the form
$ \lambda\mapsto z + \lambda v$ where $z\in \cx^n$ is the center and $v\in \cx^n$ is a vector whose length $\abs{v}$ is the radius
of the disc. The {\em boundary} of the disc is the image of the boundary $\partial\D =\{\abs{\lambda}=1\}$ of $\D$
under such a mapping. 

\medskip

In order to roll a disc on the boundary of a domain $\Omega\subset \cx^n$, we will need some geometric 
conditions on the domain. Let $U$ be a neighborhood of $\partial\Omega$ in $\cx^n$.
The domain $U \cap \Omega$ whose boundary consists of two connected components, namely $\partial \Omega$ and $B=\partial U\cap \Omega$ 
will be relevant to us. Suppose that there is a constant $\theta = \theta(\Omega) > 0$ and points
$\kappa(z) \in B$ and $\zeta(z) \in \partial\Omega$  for every $z \in U \cap \Omega$ such that the following hold:

\begin{enumerate}
\item[(D1)] The points $\zeta(z), z$ and $\kappa(z)$ are collinear and $z$ is situated between $\zeta(z)$ and $\kappa(z)$.
\item[(D2)] $\zeta(z)$ is a nearest point to $z$ on $\partial\Omega$, i.e., $\abs{ \zeta(z) - z} = {\rm dist}(z, \partial \Omega)$. (It is {\em not} assumed that there is a unique nearest point to $z$ on $\partial\Omega$.)
\item[(D3)] Let $\alpha_z$ denote the affine disc centered at $\kappa(z)$ and whose  boundary passes through the point $\zeta(z)$, so that
\begin{equation}\label{eq-alphaz}\alpha_z(\lambda)= \kappa(z)+ \lambda (\zeta(z)-\kappa(z)).\end{equation}
Then $\alpha_z(\D)$  is contained in $\Omega$.
\item[(D4)] There exists a neighborhood of $\partial \Omega$ in $\mathbb C^n$, say $V$ which is compactly contained in $U$ such that the part of the boundary of $\alpha_z$ 
that does not lie in $V$ subtends an angle of at least $\theta > 0$ at the center $\kappa(z)$. In other words, the set 
\[ \partial\D \cap \alpha_z^{-1}(\Omega\setminus V)\]
has angular length at least $\theta$.
\end{enumerate}

\noindent {\it Remarks:} First, note that there exists an $\eta > 0$ such that ${\rm dist}(\Omega \setminus U, \partial \Omega) \ge \eta$. This follows since $U$ is a neighborhood of 
$\partial \Omega$ and the assumption that $\Omega$ is bounded. The same reasoning shows that $\Omega \setminus V$ and $\partial \Omega$ also 
have a positive distance between them; in fact $\Omega \setminus U$ is compactly contained in $\Omega \setminus V$. In particular, we see 
that $\abs{\kappa(z) - \zeta(z)} \ge \eta$. Second, if there is a neighborhood $V$ of $\partial \Omega$ that satisfies (D4), then all smaller neighborhoods 
$W \subset V$ of $\partial \Omega$ will also satisfy (D4).

\medskip

\noindent Let us say that inside a domain satisfying the conditions (D1)--(D4) above we can {\em roll an analytic disc}. The following is a  
more general version of the Hopf lemma which we will prove in the next section:

\begin{thm}[Hopf lemma for \psh functions] \label{thm-hopflemma} Let $\Omega\subset\cx^n$ be a domain inside which
it is possible to roll an analytic disc, and let $\psi $ be a continuous bounded \psh exhaustion of $\Omega$. Then there is a constant
$C>0$ such that 
\begin{equation}\label{eq-hopf} \abs{\psi(z)}\geq C \,\dist(z,\partial\Omega).\end{equation}
\end{thm}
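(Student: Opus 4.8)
The plan is to run the classical Hopf lemma argument, but with the ball of the smooth case replaced by the rolling analytic disc whose existence is guaranteed by hypotheses (D1)--(D4). Fix $z\in\Omega$. If $z\in\Omega\setminus U$ then $\dist(z,\partial\Omega)$ is bounded above (the domain is bounded) and bounded below by $\eta$ (see the remark), while $|\psi(z)|$ is bounded below by $\inf_{\Omega\setminus U}|\psi|>0$ because $\Omega\setminus U$ is a compact subset of $\Omega$ on which the continuous function $\psi$ is strictly negative; so \eqref{eq-hopf} holds trivially on this set after adjusting $C$. Hence I may assume $z\in U\cap\Omega$, and I will work with the affine disc $\alpha_z$ from \eqref{eq-alphaz}, centered at $\kappa(z)\in B$, of radius $r(z)=|\kappa(z)-\zeta(z)|\geq\eta$, with $\alpha_z(\overline{\D})\subset\overline\Omega$ touching $\partial\Omega$ only (possibly) at $\zeta(z)=\alpha_z(1)$.

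The core is a subharmonic barrier on the unit disc. Consider $u(\lambda)=\psi(\alpha_z(\lambda))$, which is subharmonic on $\D$, continuous on $\overline\D$, negative, and satisfies $u(1)=\psi(\zeta(z))=0$ since $\zeta(z)\in\partial\Omega$. Let $M=\sup_{\Omega}|\psi|<\infty$ and let $m>0$ be a lower bound for $|\psi|$ on the compact set $\Omega\setminus V$ (again strictly positive by continuity of $\psi$ and relative compactness). By (D4) the arc $A=\partial\D\cap\alpha_z^{-1}(\Omega\setminus V)$ has angular length at least $\theta$, and on $A$ we have $u\leq -m$, while on the rest of $\partial\D$ we only have $u\leq 0$. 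I would compare $u$ from above, via the maximum principle on $\D$, with the harmonic function $h$ on $\D$ whose boundary values are $-m$ on $A$ and $0$ elsewhere; thus $u\leq h$ on $\D$. The point is that $h(\lambda)=-m\cdot\omega(\lambda,A,\D)$ where $\omega$ is harmonic measure, and a standard estimate (e.g. via the Poisson kernel, using that $A$ has length $\geq\theta$ and that $\zeta(z)=\alpha_z(1)$ is at the boundary) gives $\omega(\lambda,A,\D)\geq c_\theta(1-|\lambda|)$ for $\lambda$ in, say, the radius toward $1$, with $c_\theta>0$ depending only on $\theta$. Evaluating along that radius at the parameter $\lambda_0$ with $\alpha_z(\lambda_0)=z$ — note $1-|\lambda_0|=\frac{|\zeta(z)-z|}{r(z)}=\frac{\dist(z,\partial\Omega)}{r(z)}$ — yields
\[
\psi(z)=u(\lambda_0)\leq h(\lambda_0)\leq -m\,c_\theta(1-|\lambda_0|)=-\frac{m\,c_\theta}{r(z)}\,\dist(z,\partial\Omega)\leq -\frac{m\,c_\theta}{\operatorname{diam}\Omega}\,\dist(z,\partial\Omega),
\]
which is exactly \eqref{eq-hopf} with $C=m\,c_\theta/\operatorname{diam}\Omega$ (intersected with the constant from the first paragraph).

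Before this can be applied to Proposition~\ref{prop-hopf} I must check that a product of smoothly bounded domains admits rolling discs, i.e.\ verify (D1)--(D4); I expect that argument to be given separately in the next section, so here I only need Theorem~\ref{thm-hopflemma} in the stated abstract form. The main obstacle, and the one step deserving care, is the harmonic-measure lower bound $\omega(\lambda_0,A,\D)\geq c_\theta(1-|\lambda_0|)$ with a constant depending \emph{only} on $\theta$ and not on the position of the arc $A$ on $\partial\D$ nor on $z$: this uniformity is what makes $C$ independent of $z$. One has to be slightly careful because $A$ need not contain the point $1$ nor be centered near it; however, what is really needed is only that $\lambda_0$ lies on the segment $[0,1]$ approaching the boundary point $1$, that $1\notin\operatorname{int}(\partial\D\setminus A)$ is not forced — rather one uses that the complement of $A$ has length at most $2\pi-\theta$, so by monotonicity of harmonic measure $\omega(\cdot,A,\D)\geq\omega(\cdot,A_\theta,\D)$ for the worst-case arc $A_\theta$ of length exactly $\theta$ placed so as to minimize harmonic measure at points of $[0,1]$; a direct Poisson-integral computation for that explicit arc then gives the linear-in-$(1-|\lambda_0|)$ bound with an explicit $c_\theta$. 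Everything else (subharmonicity of $\psi\circ\alpha_z$, the elementary geometry $1-|\lambda_0|=\dist(z,\partial\Omega)/r(z)$, the compactness lower bounds for $|\psi|$) is routine.
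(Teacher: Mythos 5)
Your proof is correct and follows essentially the same route as the paper's: restrict to the collar $U\cap\Omega$, pull $\psi$ back along the rolling disc $\alpha_z$ to a nonpositive subharmonic function on $\D$ vanishing at $1$, use (D4) to force a definite amount of negative boundary mass, majorize by a harmonic function, and convert $1-\abs{\lambda_0}=\dist(z,\partial\Omega)/\abs{\zeta(z)-\kappa(z)}$ into the linear lower bound. The uniform harmonic-measure estimate you flag as the delicate point is handled in the paper by the one-line pointwise Poisson-kernel bound $\frac{1-\abs{\lambda}^2}{\abs{1-\lambda e^{-i\phi}}^2}\geq\frac{1-\abs{\lambda}^2}{(1+\abs{\lambda})^2}\geq\frac{1-\abs{\lambda}}{2}$, valid for every $\phi$ and hence uniform in the position of the arc, so no worst-case-arc argument is needed.
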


\noindent In order to deduce Proposition~\ref{prop-hopf} from the abstract statement of Theorem~\ref{thm-hopflemma}, we need the following two
facts:
\begin{lem}\label{lem-smoothrolling}It is possible to roll an analytic disc inside a domain with $\mathcal{C}^2$-smooth boundary.
\end{lem}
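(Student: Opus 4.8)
\textbf{Proof plan for Lemma~\ref{lem-smoothrolling}.}
The plan is to verify conditions (D1)--(D4) directly from the standard tubular-neighborhood geometry of a $\mathcal{C}^2$-smooth boundary. First I would invoke the classical fact that if $\partial\Omega$ is of class $\mathcal{C}^2$, then there exists $r_0>0$ such that every point $z$ with $\dist(z,\partial\Omega)<r_0$ has a \emph{unique} nearest point $\zeta(z)\in\partial\Omega$, the map $z\mapsto\zeta(z)$ is $\mathcal{C}^1$, and the open ball $B(\zeta(z)-r_0\nu(\zeta(z)),\,r_0)$ --- where $\nu$ is the outward unit normal --- is contained in $\Omega$ and touches $\partial\Omega$ only at $\zeta(z)$ (the interior ball condition, uniform because $\partial\Omega$ is compact). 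Choose $U=\{z\in\cx^n : \dist(z,\partial\Omega)<r_0/2\}$.

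Next I would set up the rolled disc. For $z\in U\cap\Omega$, put $\kappa(z)=\zeta(z)-r_0\nu(\zeta(z))$, the center of the interior ball, and note $\zeta(z)$, $z$, $\kappa(z)$ are collinear with $z$ between them (since $z=\zeta(z)-\dist(z,\partial\Omega)\nu(\zeta(z))$ and $0<\dist(z,\partial\Omega)<r_0/2<r_0$): this gives (D1) and (D2). For (D3), take the affine disc $\alpha_z$ as in \eqref{eq-alphaz}, centered at $\kappa(z)$ with boundary through $\zeta(z)$; then $\alpha_z(\overline{\D})$ is contained in the closed ball $\overline{B(\kappa(z),r_0)}$, whose interior lies in $\Omega$ and which meets $\partial\Omega$ only at $\zeta(z)$, so $\alpha_z(\D)\subset\Omega$. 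For (D4), I would choose $V=\{z:\dist(z,\partial\Omega)<r_0/4\}\Subset U$ and estimate how much of $\partial\D$ maps under $\alpha_z$ into $V$: a boundary point $\alpha_z(e^{i\phi})$ lies at distance $r_0$ from $\kappa(z)$, hence at distance at least $r_0-\dist(\kappa(z),\partial\Omega)$... more carefully, the point of $\partial\alpha_z$ nearest to $\partial\Omega$ is $\zeta(z)$ itself (the tangency point), and moving along the circle away from $\zeta(z)$ by angle $\phi$ moves one a Euclidean distance $\approx r_0|\phi|$ away from $\zeta(z)$; combined with the fact that near $\zeta(z)$ the boundary $\partial\Omega$ is a $\mathcal{C}^2$ graph tangent to the circle, one gets $\dist(\alpha_z(e^{i\phi}),\partial\Omega)\geq c\phi^2$ for $|\phi|\le\phi_0$. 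Hence there is a fixed $\theta>0$ (independent of $z$) so that $\dist(\alpha_z(e^{i\phi}),\partial\Omega)\geq r_0/4$ whenever $\theta\le|\phi|\le\pi$, which is exactly (D4) with this $V$; shrinking $V$ further only helps, per the second Remark.

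The main obstacle is making the estimate in (D4) genuinely uniform over all $z\in U\cap\Omega$ and over all of $\partial\Omega$. The quantities $c$ and $\phi_0$ in the quadratic lower bound depend on the $\mathcal{C}^2$-bounds of $\partial\Omega$ and on $r_0$, but compactness of $\partial\Omega$ makes them uniform; the one subtlety is that the disc $\alpha_z$ has center $\kappa(z)$ \emph{depending} on $z$ through $\zeta(z)$, so one must phrase the quadratic estimate as a property of the fixed interior ball $B(\kappa,r_0)$ touching $\partial\Omega$ at a single point, and then observe that every such configuration is a translate/rotation of finitely many model situations up to uniformly controlled error. Once (D4) is pinned down this way, the lemma follows, and Theorem~\ref{thm-hopflemma} then yields the classical Hopf lemma for $\mathcal{C}^2$ domains as a special case, en route to Proposition~\ref{prop-hopf}.
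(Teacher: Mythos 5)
Your treatment of (D1)--(D3) is correct and follows the same route as the paper: a tubular neighborhood of the $\mathcal{C}^2$ boundary gives a unique nearest point $\zeta(z)$ and a uniform interior ball, the affine disc $\alpha_z(\D)$ sits inside that ball, and (D1)--(D3) are immediate. The gap is in (D4), and it is a real one. Your quadratic estimate $\dist(\alpha_z(e^{i\phi}),\partial\Omega)\geq c\phi^2$ (which, incidentally, already requires $r_0$ to be strictly smaller than the reach, so that the sphere of radius $r_0$ is strictly more curved than $\partial\Omega$; otherwise the two can osculate and the distance need not grow quadratically) controls the arc \emph{near} the tangency point --- precisely the arc you are allowed to throw away. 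It says nothing about $\phi\in[\phi_0,\pi]$, and on $[\theta,\phi_0]$ it only yields the lower bound $c\theta^2$, which has no reason to reach $r_0/4$. So the step ``hence $\dist(\alpha_z(e^{i\phi}),\partial\Omega)\geq r_0/4$ for $\theta\leq\abs{\phi}\leq\pi$'' does not follow from what precedes it. Worse, with your normalization the conclusion can actually be false: take $\Omega=B(0,R)$ and $r_0$ close to $R$ (a legitimate choice, since the unique-nearest-point and interior-ball properties hold with this $r_0$). Then for every $\phi$ one computes $\dist(\alpha_z(e^{i\phi}),\partial\Omega)\leq 2(R-r_0)$, so for $r_0$ near $R$ the \emph{entire} circle $\partial\alpha_z$ lies inside $V=\{d<r_0/4\}$ and no arc escapes.

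What is needed is a lower bound on the distance to $\partial\Omega$ on the arc \emph{far} from the tangency point, and the tangency geometry is the wrong tool for that. The cheap fix: shrink $r_0$ so that $2r_0$ is below the reach. Then $\dist(\zeta-t\nu,\partial\Omega)=t$ for $0\leq t\leq 2r_0$, so the antipodal point $\alpha_z(-1)=\zeta(z)-2r_0\nu$ is at distance exactly $2r_0$ from $\partial\Omega$; since $\dist(\cdot,\partial\Omega)$ is $1$-Lipschitz and $\abs{\alpha_z(e^{i\phi})-\alpha_z(-1)}=2r_0\abs{\cos(\phi/2)}$, one gets $\dist(\alpha_z(e^{i\phi}),\partial\Omega)\geq 2r_0(1-\abs{\cos(\phi/2)})\geq r_0/4$ on an explicit arc about $\phi=\pi$, uniformly in $z$ and with no compactness argument at all. (The paper instead observes that the angular measure of $\partial\D\cap\alpha_z^{-1}(\Omega\setminus V)$ is a positive continuous function of the boundary point $\zeta(z)$ and takes its minimum over the compact set $\partial\Omega$; either route closes the gap, but some version of ``the deep arc of the circle stays at depth comparable to $r_0$'' must be supplied.)
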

\begin{proof}
Let $\Omega$  be a domain with $\mathcal{C}^2$-smooth boundary in $\cx^n$.  Let $\eta=\eta(\Omega)>0$ be so small that 
the domain 
\[
U = \{z\in \mathbb C^n :  \dist(z,\partial\Omega)<2\eta\}
\]
is a tubular neighborhood of $\partial\Omega$ in $\Omega$. We let
\[ 
V=  \{z\in \mathbb C^n : \dist(z,\partial\Omega)<\eta\}.
\]
For any $z\in U \cap \Omega$, let $\zeta(z)$ be the unique point on $\partial\Omega$ closest to $z$, and 
let $\kappa(z)$ be the point on the straight line in $\overline{\Omega}$ through $\zeta(z)$ and $z$ 
which is at a distance $2\eta$ from $\zeta(z)$. Then conditions (D1) and (D2) are clearly satisfied. Also the ball
$B(\kappa(z),\eta)$ is contained in $\Omega$ and touches $\partial\Omega$ only at $\zeta(z)$. Since $\alpha_z(\D)\subset B(\kappa(z),\eta)$,
condition (D3) follows. Finally, note that the angular length of the part of circle mapped by $\alpha_z$ outside $V$, i.e., $\partial \D \cap \alpha_z(\Omega \setminus V)$ is a 
continuous and positive function of $z$. The existence of $\theta(\Omega)$ now follows by compactness.
\end{proof}

\noindent It is easy to extend the rolling of analytic discs to products:

\begin{lem}\label{lem-productrolling}  If it is possible to roll a disc inside each of two domains,  it is possible to roll a disc inside their product.
\end{lem}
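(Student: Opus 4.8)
The plan is to take domains $\Omega_1 \subset \cx^{n_1}$ and $\Omega_2 \subset \cx^{n_2}$ inside each of which one can roll an analytic disc, and verify that the product $\Omega = \Omega_1 \times \Omega_2 \subset \cx^{n_1+n_2}$ satisfies (D1)--(D4). The naive idea of ``rolling in one factor at a time'' is not quite right, because the nearest point on $\partial \Omega$ to an interior point $z = (z_{(1)}, z_{(2)})$ moves in \emph{both} factors simultaneously; so one has to be a bit careful in setting up the data $\kappa$, $\zeta$ and the disc $\alpha_z$.

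First I would fix neighborhoods: let $U_j, V_j$ be the neighborhoods of $\partial\Omega_j$ from the rolling data on $\Omega_j$ (with $\theta_j = \theta(\Omega_j)$, $\eta_j$ the associated constants), and set $U = (\Omega_1 \times V_2) \cup (V_1 \times \Omega_2)$ intersected appropriately, shrunk so that it is a neighborhood of $\partial\Omega$ inside $\Omega$; similarly define a smaller $V$. Given $z = (z_{(1)}, z_{(2)}) \in U \cap \Omega$, the key geometric observation is that $\dist(z, \partial\Omega) = \min\bigl(\dist(z_{(1)}, \partial\Omega_1), \dist(z_{(2)}, \partial\Omega_2)\bigr)$, and a nearest boundary point is obtained by projecting in the factor that realizes the minimum while leaving the other coordinate fixed. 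So if, say, the minimum is attained in the first factor, I set $\zeta(z) = (\zeta_1(z_{(1)}), z_{(2)})$ where $\zeta_1$ is the nearest-point map for $\Omega_1$, I set $\kappa(z) = (\kappa_1(z_{(1)}), z_{(2)})$, and I take $\alpha_z(\lambda) = \kappa(z) + \lambda(\zeta(z) - \kappa(z)) = (\alpha_{z_{(1)}}(\lambda), z_{(2)})$, i.e. the disc lives entirely in the slice $\cx^{n_1} \times \{z_{(2)}\}$ and is exactly the disc $\alpha_{z_{(1)}}$ rolled inside $\Omega_1$, translated. Conditions (D1), (D2) are then immediate from the corresponding properties in $\Omega_1$, and (D3) follows because $\alpha_{z_{(1)}}(\D) \subset \Omega_1$ forces $\alpha_z(\D) \subset \Omega_1 \times \{z_{(2)}\} \subset \Omega$ (using $z_{(2)} \in \Omega_2$).

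The one step that needs genuine attention is (D4): I must produce a single angle $\theta = \theta(\Omega) > 0$ so that the part of $\partial\D$ on which $\alpha_z$ escapes a fixed neighborhood $V$ of $\partial\Omega$ has angular length at least $\theta$, uniformly over all $z \in U \cap \Omega$ \emph{and} over which factor realizes the minimum. Because $\alpha_z$ only varies in the first slice, $\alpha_z(\lambda) \notin V$ will hold as soon as $\alpha_{z_{(1)}}(\lambda) \notin V_1$ (after choosing $V$ so that $V \subset V_1 \times \Omega_2 \cup \cdots$, so that staying away from $\partial\Omega_1$ in the first coordinate already pushes the point out of $V$ regardless of the second coordinate's position relative to $\partial\Omega_2$, since $z_{(2)}$ is bounded away from $\partial\Omega_2$ by $\dist(z_{(2)},\partial\Omega_2) \ge \dist(z_{(1)},\partial\Omega_1)$ which can be small --- here I may need to treat the sub-case where \emph{both} distances are small by arranging $V$ to be, say, $\{w_{(1)} : \dist(w_{(1)},\partial\Omega_1) < \epsilon\} \times \{w_{(2)} : \dist(w_{(2)},\partial\Omega_2) < \epsilon\}$ with $\epsilon$ small, and checking the escape condition in the dominating factor). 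Hence $\theta = \min(\theta_1, \theta_2)$ works, after the bookkeeping on neighborhoods is arranged so that ``$\alpha_z$ leaves $V$'' is implied by ``$\alpha_{z_{(1)}}$ leaves $V_1$''. I would write this out carefully, noting by symmetry the same argument applies when the second factor realizes the minimum, and remarking that the induction to $k$ factors is immediate by associativity of the product.

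The main obstacle, then, is purely organizational: choosing the neighborhoods $U$ and $V$ of $\partial\Omega$ so that (a) $U \cap \Omega$ is genuinely a neighborhood of the whole boundary $\partial\Omega = (\partial\Omega_1 \times \overline{\Omega_2}) \cup (\overline{\Omega_1} \times \partial\Omega_2)$, including the ``edge'' $\partial\Omega_1 \times \partial\Omega_2$, and (b) escaping $V$ is controlled by escaping $V_j$ in whichever single factor $j$ realizes the minimum distance. There is no analytic difficulty --- once the disc is placed in the appropriate coordinate slice, everything is inherited from one factor --- but the case analysis near the edge (where both factor-distances are comparably small) is where one must be slightly careful, and I expect that is where the referee would look hardest.
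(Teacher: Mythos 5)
Your construction is exactly the paper's: place the disc in the slice through $z$ of whichever factor realizes $\dist(z,\partial\Omega)$, take $\theta=\min(\theta_1,\theta_2)$, and build $U$ and $V$ from the factor neighborhoods. Conditions (D1)--(D3) are indeed immediate. But the point you flag --- (D4) near the edge $\partial\Omega_1\times\partial\Omega_2$ --- is a genuine gap, and the repair you sketch (re-choosing $V$, ``checking escape in the dominating factor'') cannot close it. The obstruction is not bookkeeping: if the minimum is attained in the second factor, your disc is $\alpha_z(\overline{\D})=\{z_{(1)}\}\times\alpha_{z_{(2)}}(\overline{\D})$, and \emph{every} point of the slice $\{z_{(1)}\}\times\overline{\Omega_2}$ lies within distance $\dist(z_{(1)},\partial\Omega_1)$ of $\partial\Omega$ (replace the first coordinate by its nearest point on $\partial\Omega_1$). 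Since any open neighborhood $V$ of the compact set $\partial\Omega$ contains a full metric collar $\{\dist(\cdot,\partial\Omega)<\epsilon_V\}$ for some $\epsilon_V>0$, the entire disc is swallowed by $V$ as soon as $\dist(z_{(1)},\partial\Omega_1)<\epsilon_V$ --- which does happen for points near the edge even when the second factor realizes the minimum. So for such $z$ the set $\partial\D\cap\alpha_z^{-1}(\Omega\setminus V)$ is empty for \emph{every} admissible $V$, and (D4) fails for the slice disc; no case analysis on the shape of $V$ dissolves this.

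For comparison: the paper's own proof is the identical construction and simply asserts that (D1) through (D4) for $\alpha_z$ ``follow from the fact that they hold for $\alpha_{z_2}$ in $\Omega_2$,'' without confronting the edge, so you have reproduced the published argument and, to your credit, located precisely where it is incomplete --- but you have not fixed it. A genuine fix must either use a disc that moves in both factors near the edge (so its boundary reaches a fixed compact subset of $\Omega$), or weaken the conclusion of the Hopf lemma to $\abs{\psi(z)}\geq C\,\dist(z,\partial\Omega)^k$ with $k$ the number of factors, which \emph{can} be proved by iterating the one-factor lemma: the one-factor argument gives $\abs{\psi(z_{(1)},z_{(2)})}\geq c\,\dist(z_{(2)},\partial\Omega_2)\,m(z_{(1)})$ where $m(z_{(1)})$ is the minimum of $\abs{\psi(z_{(1)},\cdot)}$ over a fixed compact subset of $\Omega_2$, and $-m$ is itself a negative continuous plurisubharmonic exhaustion of $\Omega_1$ to which the one-factor lemma applies again. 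A power of the distance is all that the application in Proposition 4.1 actually requires, so this weaker route suffices for the rest of the paper; but as written, neither your argument nor the one you were asked to reconstruct establishes (D4) as stated.
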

\begin{proof}Let $\Omega_1$, $\Omega_2$ be domains inside each of which it is possible to roll an analytic disc, and 
let $U_j, V_j, \kappa_j, \zeta_j$ denote the objects posited for each $j=1,2$ at the beginning of section~\ref{sec-rolling}. We define the corresponding 
objects on the product $\Omega=\Omega_1\times \Omega_2$ in the following way. We let the open neighborhood $U$ of $\partial\Omega$ 
be  the union $(\Omega_1\times U_2) \cup (U_1\times \Omega_2)$ and likewise let $V = (\Omega_1 \times V_2) \cup (V_1 \times \Omega_2)$. We also 
let $\theta(\Omega)=\min(\theta(\Omega_1),\theta(\Omega_2)) > 0$.

\medskip

Now let $z\in U \cap \Omega$.  Writing $z=(z_1,z_2)\in\Omega_1\times\Omega_2$, we have $\dist(z,\partial\Omega)= \min\left(\dist(z_1,\partial\Omega_1),
\dist(z_2,\partial\Omega_2)\right)$. After swapping the indices if  required, we can assume that $\dist(z,\partial\Omega)= \dist(z_2,\partial\Omega_2)$.
We now let $\zeta(z)= (z_1,\zeta_2(z_2))$ and $\kappa(z)=(z_1,\kappa_2(z_2))$. Let
\[
\alpha_{z_2}(\lambda) = \kappa_2(z_2) + \lambda(\zeta_2(z_2) - \kappa_2(z_2))
\]
be the affine analytic disc in $\Omega_2$ that satisfies the conditions (D1) to (D4) in $\Omega_2$. Define 
\[
\alpha_z(\lambda) = (z_1 , \alpha_{z_2}(\lambda))
\]
which is an affine analytic disc in $\Omega_1 \times \Omega_2$ whose center is at $\alpha_z(0) = (z_1, \alpha_{z_2}(0)) = (z_1, z_2)$ and 
which contains $\zeta(z)$ in its boundary. Properties (D1) through (D4) for $\alpha_z$ now follow from the fact 
that they hold for $\alpha_{z_2}$ in $\Omega_2$.
\end{proof}

\noindent Combining this result and Lemmas~\ref{lem-smoothrolling} and \ref{lem-productrolling} with Theorem~\ref{thm-hopflemma},
we obtain Proposition~\ref{prop-hopf}.  It only remains to give a proof of  Theorem~\ref{thm-hopflemma}.

\subsection{Estimate on bounded \psh exhaustion.}

\begin{proof}[Proof of Theorem~\ref{thm-hopflemma}]
Let $U, V$ be neighborhoods of $\partial \Omega$ in $\mathbb C^n$ with $V$ compactly contained in $U$ and $\theta = \theta(\Omega) > 0$ be such that 
properties (D1)--(D4) hold on $\Omega$. It is sufficient 
to show that \eqref{eq-hopf} holds for $z\in U \cap \Omega$.  Fixing 
such a $z$, we let $\alpha_z$ be the analytic disc in \eqref{eq-alphaz}, and let $u_z$ be the continuous function defined on $\overline{\D}$ given by
$u_z= \psi\circ\alpha_z$, so that $u_z $ is  subharmonic on $\D$, is less than or equal to zero on $\overline{\D}$, and equal to zero  at $1\in \overline{\D}$.
 Let $h_z$ be the harmonic majorant of $u_z$ on the disc $\D$, i.e., the solution of the Dirichlet problem on the disc with boundary data $u_z|_{\partial\D}$.
 Let $\lambda\in \D$. We have
 \begin{align*}
 u_z(\lambda)&\leq h_z(\lambda)\\
 &= \frac{1}{2\pi}\int_{0}^{2\pi} \frac{1-\abs{\lambda}^2}{1- 2 \Re(\lambda e^{-i\phi})+\abs{\lambda}^2}u(e^{i\phi})d\phi.\\
& =\frac{1-\abs{\lambda}^2}{(1+\abs{\lambda})^2}\cdot\frac{1}{2\pi}\int_0^{2\pi} u(e^{i\phi})d\phi\\
 &=h_z(0)\cdot\frac{1-\abs{\lambda}}{1+\abs{\lambda}}\\
 &\leq\frac{h_z(0)}{2}\left(1-\abs{\lambda}\right).
 \end{align*}
 Let $\lambda_z = \displaystyle{\frac{\abs{z-\kappa(z)}}{\abs{\zeta(z)-\kappa(z)}}}$.
  Since the three points $\kappa(z) , z$ and $\zeta(z)$ are collinear in 
 this order, it follows that  $z= \kappa(z)+ \lambda_z(\zeta(z)-\kappa(z))$.   Therefore,
 \begin{align*}
 \psi(z)&=u_z(\lambda_z)\\
 &\leq \frac{h_z(0)}{2}\left( 1 - \frac{\abs{z-\kappa(z)}}{\abs{\zeta(z)-\kappa(z)}}\right)\\
 &\leq \frac{h_z(0)}{2}\frac{\abs{\zeta(z)-\kappa(z)}-\abs{z-\kappa(z)}}{\abs{\zeta(z)-\kappa(z)}}\\
 &\leq \frac{h_z(0)}{2}\frac{\dist(z,\partial\Omega)}{\eta} \end{align*}
where we have used the observation made earlier that $\abs{\kappa(z) - \zeta(z)} \ge \eta$ for some $\eta = \eta(\Omega) > 0$. Noting that 
$\psi$, $u_z$ and $h_z$ are negative, we have
\begin{align*} 
\abs{h_z(0)}&= \frac{1}{2\pi}\int_0^{2\pi}\abs{u(e^{i\phi})}d\phi\\
&= \frac{1}{2\pi}\int_0^{2\pi}\abs{\psi(\alpha_z(e^{i\phi}))}d\phi.\\
\end{align*}
Denote by $\mu$ the minimum value of the continuous function $\abs{\psi}$ on the compact set $\Omega\setminus V$. Since it is 
possible to roll a disc inside $\Omega$,  there is a subset of measure at least $\theta$ of $[0,2\pi]$ which 
is mapped by $\alpha_z$ into $\Omega\setminus V$. Therefore, it follows that $\abs{h_z(0)}\geq \mu\theta/2\pi$, independently of $z$.
Therefore, we have $\abs{\psi(z)}\geq C \dist(z,\partial\Omega)$ with $C=\mu\theta/4\pi\eta$ independent of $z$.
\end{proof}


\subsection{Some more general results}
We note here a couple of statements more general than Proposition~\ref{prop-dist} which can be proved starting from
Proposition~\ref{prop-hopf}:

\begin{prop}\label{prop-gendist}(1) Let $D$ be a Lipschitz pseudoconvex domain, and let $G$ be a domain which can be represented 
as a product of smoothly bounded domains.  If  $f:D\to G$ is a proper holomorphic map,
then there is a $C>0$, and an $0<\eta<1$  such that for each $z\in D$, we have
\[ \dist(f(z), \partial G)\leq C \;\dist(z,\partial D)^\eta.\]
(2) Let $D$ be a domain which can be represented as product of smoothly bounded pseudoconvex domains, and 
let $G$ be a Lipschitz domain. If $f:D\to G$ is a proper holomorphic map, 
then there is a $C>0$, and an $0<\eta<1$  such that for each $z\in D$ we have
\[  \frac{1}{C}\dist(z,\partial D)^{\frac{1}{\eta}}\leq\dist(f(z), \partial G)\]
\end{prop}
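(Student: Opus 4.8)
The plan is to establish Proposition~\ref{prop-gendist} by essentially the same two-sided argument used for Proposition~\ref{prop-dist}, but being careful about which side of the inequality requires which hypothesis. The crucial asymmetry is this: to produce the \emph{upper} bound $\dist(f(z),\partial G)\leq C\,\dist(z,\partial D)^\eta$ we need a bounded \psh exhaustion of the \emph{target} $G$ with a H\"older estimate from above, together with the (ordinary, unrefined) Hopf-type lower bound on the \emph{source} $D$; to produce the \emph{lower} bound $\dist(z,\partial D)^{1/\eta}\lesssim\dist(f(z),\partial G)$ we need a bounded \psh exhaustion of the source $D$ with a H\"older estimate from above and the Hopf-type lower bound on the target $G$. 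In Proposition~\ref{prop-dist} both $D$ and $G$ are products of smoothly bounded pseudoconvex domains, so both ingredients are available on both sides; here each side uses only the ingredient that survives under the weaker hypothesis.

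For part~(1): since $G$ is a product of smoothly bounded domains and is the proper-holomorphic image of the pseudoconvex $D$, it is pseudoconvex, so each factor $G^j$ is a smoothly bounded pseudoconvex domain. Hence the Diederich--Fornaess construction applies to each factor, and the formula \eqref{eq-newlambda} produces a continuous bounded \psh exhaustion $\mu$ of $G$ satisfying $\abs{\mu(w)}\leq \ell\,\dist(w,\partial G)^\theta$ as in \eqref{eq-bddpsh}. Then $\phi=\mu\circ f$ is a continuous bounded \psh exhaustion of $D$. Now I would invoke the classical Hopf lemma for \psh functions on $D$ in the form needed: since $D$ is Lipschitz one cannot roll a disc in general, so instead I would use a weaker but still sufficient statement, namely that a bounded \psh (indeed plurisubharmonic, hence subharmonic) exhaustion of \emph{any} bounded domain $D$ satisfies $\abs{\phi(z)}\geq c\,\dist(z,\partial D)^{N}$ for some $c>0$ and some exponent $N$ — for a Lipschitz domain this follows by a cone/Harnack argument comparing $\phi$ on an interior cone with vertex near $\zeta(z)\in\partial D$, a standard replacement for the interior-ball condition. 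Combining $\abs{\phi(z)}\geq c\,\dist(z,\partial D)^N$ with $\abs{\phi(z)}=\abs{\mu(f(z))}\leq \ell\,\dist(f(z),\partial G)^\theta$ yields $\dist(f(z),\partial G)\leq C\,\dist(z,\partial D)^{N/\theta}$, which is the asserted inequality with $\eta=N/\theta$ (and one checks $\eta<1$ can be arranged, or simply replace $\eta$ by $\min(\eta,1/2)$ using boundedness of the domains).

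For part~(2): now $D$ is a product of smoothly bounded pseudoconvex domains, so by the argument in the proof of Proposition~\ref{prop-dist} — Lemmas~\ref{lem-smoothrolling} and \ref{lem-productrolling} together with Theorem~\ref{thm-hopflemma} — we get the sharp Hopf estimate $\abs{\lambda(z)}\geq C\,\dist(z,\partial D)$ for a bounded \psh exhaustion $\lambda$ of $D$ constructed as in \eqref{eq-newlambda}, also satisfying $\abs{\lambda(z)}\leq C\,\dist(z,\partial D)^\eta$. Since $G$ is merely Lipschitz, I form $\psi$ on $G$ by taking the maximum of $\lambda\circ F_j$ over the local inverse branches $F_j$ of $f$ on $G\setminus Z$ (where $Z$ is the branch locus), exactly as in the proof of Proposition~\ref{prop-dist}; this $\psi$ extends to a bounded continuous \psh exhaustion of $G$. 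Then for each branch, $-\lambda(F_j(w))\geq -\psi(w)=\abs{\psi(w)}$, and applying the Lipschitz Hopf-type lower bound $\abs{\psi(w)}\geq c\,\dist(w,\partial G)^M$ for $G$ gives $\abs{\lambda(z)}\geq c\,\dist(f(z),\partial G)^M$; combined with $\abs{\lambda(z)}\leq C\,\dist(z,\partial D)^\eta$ this yields $\dist(z,\partial D)^{\eta/M}\lesssim \dist(f(z),\partial G)$, i.e. the claim with $1/\eta$ replaced by the appropriate exponent.

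The main obstacle is the Hopf-type lower estimate $\abs{\psi}\geq c\,\dist(\cdot,\partial\Omega)^N$ on a general bounded Lipschitz domain $\Omega$, since Theorem~\ref{thm-hopflemma} as stated requires the disc-rolling conditions (D1)--(D4), which genuinely fail at the corners/cusps of an arbitrary Lipschitz boundary. I expect this to be handled by a Harnack-chain argument: at $z$ near $\partial\Omega$ pick a boundary point $\zeta(z)$ realizing the distance, use the Lipschitz (interior cone) condition to fit a truncated cone $K_z\subset\Omega$ with vertex $\zeta(z)$ containing $z$, observe that $\abs{\psi}$ is bounded below by a fixed constant $\mu>0$ on the ``far end'' of the cone (a compact piece bounded away from $\partial\Omega$), and propagate this lower bound down the cone toward $z$ via a fixed number (depending logarithmically on $\dist(z,\partial\Omega)$) of Harnack applications to the positive superharmonic function $-\psi$; each step costs a constant factor, producing the polynomial loss $\dist(z,\partial\Omega)^N$. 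A slightly cleaner alternative, if one prefers to quote rather than prove, is to note that a bounded \psh exhaustion of a bounded pseudoconvex Lipschitz domain dominates $-C\dist(\cdot,\partial\Omega)^{N}$ by results on the boundary behaviour of the pluricomplex Green function or of Diederich--Fornaess exponents for Lipschitz domains; but the self-contained Harnack-chain proof is more in the spirit of the rest of the paper and I would include it.
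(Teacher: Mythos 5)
Your division of labour between the two halves is inverted, and this propagates into inequalities that point the wrong way. In the proof of Proposition~\ref{prop-dist}, the \emph{upper} bound $\dist(f(z),\partial G)\leq C\dist(z,\partial D)^\eta$ is obtained from an exhaustion $\lambda$ of the \emph{source} $D$ satisfying $\abs{\lambda}\leq k\,\dist(\cdot,\partial D)^\eta$, pushed forward to $G$ via the branches of $f^{-1}$, together with the Hopf lower bound on the \emph{target} $G$; the \emph{lower} bound uses an exhaustion of the target with a H\"older upper estimate, pulled back by $f$, together with the Hopf bound on the source. You assert the opposite pairing and then follow it. Concretely, in your part (1) you combine $\abs{\phi(z)}\geq c\,\dist(z,\partial D)^N$ with $\abs{\phi(z)}=\abs{\mu(f(z))}\leq\ell\,\dist(f(z),\partial G)^\theta$; this chain gives $\dist(f(z),\partial G)^\theta\geq (c/\ell)\,\dist(z,\partial D)^N$, a \emph{lower} bound on $\dist(f(z),\partial G)$, not the asserted upper bound. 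Symmetrically, your part (2) chain $c\,\dist(f(z),\partial G)^M\leq\abs{\lambda(z)}\leq C\,\dist(z,\partial D)^\eta$ yields an \emph{upper} bound on $\dist(f(z),\partial G)$, not the claimed lower one. So in each part you have derived (at best) the conclusion appropriate to the other configuration, not the one stated.

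Once the pairing is corrected, the ingredient you actually need on the Lipschitz domain is not a Hopf-type \emph{lower} bound for arbitrary exhaustions (your Harnack-chain lemma), but a bounded \psh exhaustion with an \emph{upper} H\"older bound $\abs{\lambda}\leq k\,\dist(\cdot,\partial\Omega)^\eta$ --- a Diederich--Fornaess-type statement, which is nontrivial on Lipschitz domains and is exactly what the paper imports as Harrington's theorem (Result~\ref{res-harring}). The Hopf lower bound, by contrast, is only ever needed on the factor that is a product of smoothly bounded domains, where Proposition~\ref{prop-hopf} already applies, so no Hopf lemma on a general Lipschitz domain is required at all. With Result~\ref{res-harring} in hand, both parts follow verbatim from the argument of Proposition~\ref{prop-dist}: for (1) use Harrington's exhaustion on the Lipschitz pseudoconvex $D$, push it forward by the branches of $f^{-1}$, and roll discs in $G$; for (2) note that $G=f(D)$ is pseudoconvex, use Harrington's exhaustion on $G$, pull it back by $f$, and roll discs in $D$.
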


\noindent In order to prove Proposition~\ref{prop-gendist}, we need a \psh exhaustion function on a Lipschitz domain. 
It has recently been shown by Harrington (see \cite{harrington}) that it is possible to construct
even a {\em strictly} \psh bounded exhaustion functions on arbitrary  bounded {\em Lipschitz}
pseudoconvex domains in $\cx^n$.  More precisely, we have the following:
\begin{result}\label{res-harring}
Let $\Omega\Subset\cx^n$ be a bounded pseudoconvex domain with Lipschitz boundary. Then for some $0<\eta<1$, there exists
a negative strictly plurisubharmonic function $\lambda$ on  $\Omega$  and a constant $k>1$  such that 
\[ \frac{1}{k}\dist(\cdot,\partial\Omega)^\eta < -\lambda < k \,\dist(\cdot,\partial\Omega)^\eta.\]
\end{result} 
\noindent The proof of Proposition~\ref{prop-gendist} follows in the same way as that of  Proposition~\ref{prop-dist},
using  Result~\ref{res-harring} to construct the required  bounded \psh exhaustions.


\section{Proof of Theorem~\ref{thm-main}}
\label{sec-proof}
We break the proof up into several steps. Let $f:D\to G$ be a proper holomorphic map
as in the statement of Theorem~\ref{thm-main}, and  let $u=\det(f')$ be the {\em complex}
Jacobian determinant of the mapping $f$. 

\begin{lem}\label{lem-gof} For every integer $s\geq 0$, there is an integer $j(s)\geq 0$ such 
that the mapping from $L^2(G)$ to $L^2(D)$ given by 
\[ g\mapsto u\cdot (g\circ f)\]
is continuous from $W^{s+j(s)}_0(G) $ to $W^s_0(D)$.
\end{lem}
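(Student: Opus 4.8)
The statement is the product-domain analogue of the standard transformation rule for the Bergman projection under proper maps (the ``$u\cdot(g\circ f)$ trick'' of Bell). The plan is to establish it first for $g\in\mathcal{C}^\infty_0(G)$ (or on the algebraic tensor product $\bigotimes_j \mathcal{C}^\infty_0(G^j)$, dense by Result~\ref{res-tensor}) and then extend by continuity. For such $g$, the composite $g\circ f$ is smooth on $D$, and multiplication by $u=\det(f')$, which is holomorphic, keeps it smooth; the real content is the quantitative \emph{decay near $\partial D$} needed to land in $W^s_0(D)$ rather than merely $W^s(D)$. The key point is that $u\cdot(g\circ f)$ vanishes to high order at $\partial D$: since $g$ is supported in a compact $K\Subset G$, and $f$ is proper, $f^{-1}(K)$ is compact in $D$, so $g\circ f$ itself has compact support in $D$ and the lemma is in fact trivial for $g\in\mathcal{C}^\infty_0(G)$ --- the map $g\mapsto u\cdot(g\circ f)$ sends $\mathcal{C}^\infty_0(G)$ into $\mathcal{C}^\infty_0(D)\subset W^s_0(D)$. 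So the real work is the \emph{continuity estimate}, i.e. bounding $\norm{u\cdot(g\circ f)}_{W^s(D)}$ by $\norm{g}_{W^{s+j(s)}(G)}$ with $j(s)$ independent of $g$.

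\textbf{Main steps.} First I would reduce to a boundary neighborhood: away from $\partial D$ the estimate is routine since $f$ extends holomorphically across (and $u$ is bounded there), so fix a neighborhood $U$ of $\partial D$ and work locally. Second, I would use Proposition~\ref{prop-dist}: there is $0<\eta<1$ with $\dist(f(z),\partial G)\geq C^{-1}\dist(z,\partial D)^{1/\eta}$, hence for $w=f(z)$ near $\partial D$ one has control of $\dist(w,\partial G)^{-1}$ by a power of $\dist(z,\partial D)^{-1}$. Third, apply the chain rule: a derivative of order $s$ of $g\circ f$ is a sum of terms, each a product of derivatives of $g$ (of order $\leq s$) evaluated at $f(z)$ times polynomials in derivatives of the components of $f$ (of total order $\leq s$). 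The derivatives of $f$ of order $\ell$ are estimated, via the Cauchy estimates on balls of radius comparable to $\dist(z,\partial D)$ together with boundedness of $f$, by $C\,\dist(z,\partial D)^{-\ell}$; similarly $|u|$ and its derivatives are bounded by negative powers of $\dist(z,\partial D)$. Fourth, and this is the crucial trick, I would \emph{not} estimate $|D^\alpha g|(f(z))$ pointwise by a negative power of $\dist(f(z),\partial G)$ --- that would lose too much. Instead I would integrate: bound $\norm{u\cdot(g\circ f)}_{W^s(D)}^2$ by a sum of integrals over $D$ of $|D^\alpha g(f(z))|^2$ times (negative powers of $\dist(z,\partial D)$), change variables $w=f(z)$ (the map is a branched cover of some finite degree $m$, and the Jacobian of the \emph{real} change of variables is $|u(z)|^2$), and absorb the extra powers of $\dist(z,\partial D)^{-1}$: on the subvariety $Z=\{\det f'=0\}$ these are integrable against $|u|^2\,dV$ because $|u|$ itself vanishes there, while near $\partial G$ one converts $\dist(z,\partial D)^{-2N}$ into $\dist(w,\partial G)^{-2N'}$ using Proposition~\ref{prop-dist} and then compares with the Sobolev norm of $g$ via Hardy-type inequalities: a function in $W^{s+j}_0(G)$ whose derivatives of order $\le s+j$ are square-integrable has $D^\alpha g$, $|\alpha|\le s$, decaying enough that $\dist(w,\partial G)^{-2N'}|D^\alpha g|^2$ is integrable, with the loss $j(s)$ dictated by $N'$ (hence ultimately by $\eta$, $s$, $n$, and the degree $m$). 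Finally, collect the constants and define $j(s)$.

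\textbf{Alternative/cleaner route.} Since $g\in\mathcal{C}^\infty_0(G)$ already gives $g\circ f\in\mathcal{C}^\infty_0(D)$, one may avoid the Hardy inequality entirely and instead run the continuity estimate with $W^{s}_0$-norms replaced throughout by the partial Sobolev norms $\pss^s$, $\pss^s_0$ of Section~\ref{sec-functions}, exploiting that on a product the distance to the boundary is the \emph{minimum} of the factor distances and that $f$ need not be shown to split at this stage. But I expect it is technically smoother to argue as above with ordinary Sobolev norms and then invoke \eqref{eq-pssinc}, \eqref{eq-pss0inc} only if needed downstream.

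\textbf{Main obstacle.} The genuine difficulty is the simultaneous control, under the real change of variables $w=f(z)$, of three competing sources of blow-up near the boundary: the negative powers of $\dist(z,\partial D)$ coming from the Cauchy estimates on the derivatives of $f$ and $u$; the vanishing of $|u|^2$ along $Z=\{\det f'=0\}$ which must \emph{exactly} compensate the singularity there (this is where properness and the fact that $u$ is a nontrivial holomorphic function matter); and the conversion of boundary-distance powers between $D$ and $G$ via the Hölder-type two-sided estimate of Proposition~\ref{prop-dist}, which is what forces the loss $j(s)$ to depend on $\eta$. Getting a clean bookkeeping of the exponent $j(s)$ through these three mechanisms --- and checking it is finite and independent of $g$ --- is the heart of the argument; everything else is the chain rule and Cauchy estimates.
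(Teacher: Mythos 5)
Your setup matches the paper's: reduce to $g\in\mathcal{C}^\infty_0(G)$, expand $D^\alpha\bigl(u\cdot(g\circ f)\bigr)$ by the Leibniz/chain rule, and control the derivatives of $f$ and $u$ by Cauchy estimates, $\abs{D^{\delta}f_l(z)}\leq C d_1(z)^{-\abs{\delta}}$, $\abs{D^\beta u(z)}\leq Cd_1(z)^{-(\abs{\beta}+1)}$. But you part company with the paper at exactly the decisive step, and the route you choose instead has a genuine gap. The paper's key move is a \emph{pointwise, positive-power} decay estimate for compactly supported functions: by Sobolev embedding on the Lipschitz domain $G$ together with Taylor's theorem at the boundary, $\abs{D^\gamma g(w)}\leq C\norm{g}_{k+\abs{\gamma}+n+1}\,d_2(w)^{k}$ for every $k\geq 1$. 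Combined with the \emph{right} half of Proposition~\ref{prop-dist}, $d_2(f(z))\leq Cd_1(z)^\eta$, this gives $\abs{D^\alpha(u\cdot(g\circ f))(z)}\leq C\norm{g}_{k+s+n+1}d_1(z)^{-\abs{\alpha}-1+k\eta}$, and choosing $k>(s+1)/\eta$ finishes the proof with $j(s)=k+n+1$. You explicitly reject the pointwise route on the grounds that it ``would lose too much,'' but that objection applies only to estimating $D^\gamma g$ by a \emph{negative} power of $d_2$; the whole point is that compact support buys an arbitrarily high \emph{positive} power of $d_2(w)$, at the price of finitely many extra derivatives in the norm of $g$. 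This is the idea your proposal is missing.

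The alternative you propose --- integrate, change variables $w=f(z)$ with real Jacobian $\abs{u}^2$, and use Hardy-type inequalities --- does not go through as described. The terms in the Leibniz expansion carry factors $\abs{D^\beta u}^2\prod\abs{D^{\delta_i}f_{i_i}}^2$, not $\abs{u}^2$; pushing the integral forward to $G$ therefore produces a factor $\sum_j\abs{u(F_j(w))}^{-2}$ summed over the branches of $f^{-1}$, which blows up along $f(Z)$, and your assertion that the vanishing of $\abs{u}$ on $Z$ ``exactly compensates'' is not substantiated (and is false for the terms as written, e.g.\ for $\beta\neq 0$). There is also a direction error in your use of Proposition~\ref{prop-dist}: to convert the negative powers $d_1(z)^{-2N}$ coming from the Cauchy estimates into negative powers of $d_2(w)$ you need a lower bound $d_1(z)\geq c\,d_2(f(z))^{1/\eta}$, i.e.\ the right half $d_2(f(z))\leq Cd_1(z)^\eta$, not the left half you cite. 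Since both halves are available the latter is repairable, but the branch-locus obstruction is not, within the scheme you describe; the pointwise Taylor estimate is what lets the paper avoid changing variables altogether.
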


\begin{proof} 
Following \cite{Be1}, \cite{CS} and \cite{krantz}, it suffices to show that there is a uniform constant $C > 0$ such that
\[
\norm{u \cdot (g \circ f)}_{W^s_0(D)} \le C \norm{g}_{W^{s + j(s)}_0(G)}
\]
for all $g \in \mathcal C^{\infty}_0(G)$ and for an appropriately chosen $j(s)$. Let $f = (f_1, f_2, \ldots, f_n)$ where $f_j \in \mathcal O (D)$ for $1 \le j \le n$ 
and let $d_1(\cdot) = \dist(\cdot, \partial D)$ and $d_2(\cdot) = \dist(\cdot, \partial G)$. Since $D, G$ are bounded, the Cauchy estimates imply that there is a uniform $C > 0$ such 
that
\[
\left \vert D^{\alpha} f_l (z) \right \vert \le C (d_1(z))^{-\abs{\alpha}}
\]
and
\[
\left\vert D^{\alpha} u(z) \right\vert \le C (d_1(z))^{-(\abs{\alpha} + 1)}
\]
for all multiindices $\alpha$ and $1 \le l \le n$. Now for every such $\alpha$ with $\abs{\alpha} \le s$ observe that
\begin{equation}
D^{\alpha} \left( u \cdot (g \circ f) \right) = \sum D^{\beta}u \cdot \left( (D^{\gamma} g) \circ f \right) \cdot D^{\delta_1} f_{i_1} \cdot D^{\delta_2} f_{i_2} \ldots \cdot 
D^{\delta_p} f_{i_p}
\end{equation}
where the sum extends over all $1 \le i_1, \ldots, i_p \le n$ and multiindices $\beta, \gamma, \delta_1, \ldots \delta_p$ with $\abs{\beta} \le \abs{\alpha}, \abs{\gamma} \le 
\abs{\alpha} - \abs{\beta}$ and $\abs{\delta_1} + \ldots + \abs{\delta_p} = \abs{\alpha} - \abs{\beta}$. It follows that for $z \in D$
\[
\left \vert D^{\beta}u \cdot D^{\delta_1} f_{i_1} \cdot D^{\delta_2} f_{i_2} \ldots \cdot D^{\delta_p} f_{i_p}  \right \vert \le C d_1(z)^{-(\abs{\alpha}+1)}.
\]
For any smooth $\phi$ which is compactly supported in $G$, the Sobolev embedding theorem (which continues to 
hold on a Lipschitz domain thanks to the extension property stated in Section~\ref{sec-functions} above) combined with Taylor's theorem shows that for every $k \ge 1$
\[
\left \vert D^{\gamma} \phi(w) \right \vert \le C \norm{\phi}_{k + \abs{\gamma} + n + 1} \cdot d_2(w)^k
\]
for some $C > 0$ that is independent of $\phi$. Applying this to the compactly supported smooth function $g$ and using it in (5.1) we see that
\begin{align*}
\left \vert D^{\alpha} \left( u \cdot( g \circ f)  \right)(z)  \right \vert &\le C d_1(z)^{-(\abs{\alpha} + 1)} \cdot \norm{g}_{k + \abs{\alpha} + n + 1} \cdot d_2(f(z))^k\\
                                                                            &\le C \norm{g}_{k + s + n + 1} \cdot d_1(z)^{-\abs{\alpha}-1+k\eta}
\end{align*}
where the second inequality follows from the right half of the estimate in Proposition 4.1. By choosing $k > (s + 1)/\eta$ it follows that the mapping $g \mapsto u \cdot( g \circ f)$ is 
bounded from $W^{s + j(s)}_0(G)$ to $W^{s}_0(D)$ with $j(s) = k + n + 1$.
\end{proof}

\begin{lem}\label{lem1}
For each $h\in \mathcal H^{\infty}(G)$, we have 
\[  u\cdot (h\circ f)\in \mathcal{C}^\infty(\overline{D}).\]
In particular, $u \in \mathcal C^{\infty}(\overline D)$.
\end{lem}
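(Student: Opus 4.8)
The plan is to use the characterization $\mathcal{C}^\infty(\overline{D}) = \bigcap_{s\geq 0} W^s(D)$ valid on Lipschitz domains, and to deduce membership in each $W^s(D)$ via a Bergman-projection trick in the spirit of Bell–Catlin. First I would observe that since $D$ satisfies Condition~R (by Lemma~\ref{lem-miscprod}) and each $D^i$ is pseudoconvex, the function $v := u\cdot(h\circ f)$ is holomorphic on $D$ and lies in the Bergman space $\mathcal{H}(D)$: indeed $u\,(h\circ f)$ is the Jacobian-weighted pullback of the bounded holomorphic $h$, and the change-of-variables formula (using that $f$ is proper, hence a branched cover of some degree $m$) gives $\int_D |u|^2\,|h\circ f|^2 = m\int_G |h|^2 < \infty$. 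So $v\in\mathcal{H}(D)$, and in particular $Pv = v$, where $P$ is the Bergman projection on $D$.

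The key step is to show $v\in W^s(D)$ for every $s$. Fix $s\geq 0$ and let $\Phi^s$ be the Bell operator of order $s$ on $D$ furnished by Lemma~\ref{lem-bellprod}, so $\Phi^s : W^{s+N(s)}(D)\to W^s_0(D)$ is bounded and $P\Phi^s = P$. The transformation law for the Bergman projection under proper maps (the Bell transformation formula) states that for $g\in L^2(G)$ one has $P_D\big(u\cdot(g\circ f)\big) = u\cdot\big((P_G g)\circ f\big)$; applying this with $g$ chosen so that $P_G g = h$ — e.g. $g = h$ itself, since $h\in\mathcal{H}^\infty(G)\subset\mathcal{H}(G)$ gives $P_G h = h$ — yields $P_D v = v$. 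Now write, for any test function, $v = P_D v = P_D \Phi^s(\text{something in } W^{s+N(s)}_0(G))$. More precisely: by Lemma~\ref{lem-gof}, the map $g\mapsto u\cdot(g\circ f)$ sends $W^{s+N(s)+j(s+N(s))}_0(G)$ continuously into $W^{s+N(s)}_0(D)$; choose $g = \Phi^{s+N(s)}_G \widetilde h$ where $\widetilde h\in W^{s+N(s)+j}_0(\ldots)$ is obtained from $h$ via a Bell operator on $G$ applied after approximating $h$ — here I use that Condition~R on $G$ (which holds, since $G$ is a proper image of a pseudoconvex domain and each $G^j$ is smoothly bounded pseudoconvex, so $G$ satisfies Condition~R by Lemma~\ref{lem-miscprod} once we know $l=k$ — but for this lemma we only need the cruder fact that $\mathcal{H}^\infty(G)$ is dense, which follows from Condition~R on the factors) lets the Bell operator $\Phi^r_G$ map $\mathcal{H}^r(G)$ into $W^r_0(G)$. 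Then $u\cdot(g\circ f)\in W^{s+N(s)}_0(D)$, so $\Phi^s_D\big(u\cdot(g\circ f)\big)\in W^s_0(D)$, and applying $P_D$ and the transformation law twice:
\[
P_D\,\Phi^s_D\big(u\cdot(g\circ f)\big) = P_D\big(u\cdot(g\circ f)\big) = u\cdot\big((P_G g)\circ f\big) = u\cdot\big((P_G\Phi^r_G\widetilde h)\circ f\big) = u\cdot\big((P_G\widetilde h)\circ f\big) = u\cdot(h\circ f) = v.
\]
Since $\Phi^s_D\big(u\cdot(g\circ f)\big)\in W^s_0(D)\subset W^s(D)$ and $P_D$ is bounded on $W^s(D)$ — this is exactly Condition~R for $D$ — we get $v = P_D\Phi^s_D(\cdots)\in W^s(D)$. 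As $s$ was arbitrary, $v\in\bigcap_s W^s(D) = \mathcal{C}^\infty(\overline{D})$.

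For the final assertion, take $h\equiv 1$ on $G$ (which is in $\mathcal{H}^\infty(G)$ since $G$ is bounded), giving $u = u\cdot(1\circ f)\in\mathcal{C}^\infty(\overline{D})$. The main obstacle I anticipate is arranging the transformation formula bookkeeping cleanly: one must verify the Bell transformation law $P_D(u\cdot(g\circ f)) = u\cdot((P_G g)\circ f)$ in this product setting (it follows from the fact that $g\mapsto u\cdot(g\circ f)$ is an isometry up to the factor $\sqrt m$ from $L^2(G)$ onto a closed subspace of $L^2(D)$ that respects the holomorphic subspaces, by the change of variables for branched covers), and must track which Sobolev order is needed at each stage so that the composition $\Phi^s_D\circ(\text{pullback})\circ\Phi^{r}_G$ lands in the right space — the indices $N(s)$ and $j(s)$ from Lemmas~\ref{lem-bellprod} and \ref{lem-gof} grow but remain finite, so for each fixed $s$ the argument closes.
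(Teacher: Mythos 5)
Your proposal follows the paper's argument: the Bell transformation formula $P_D\bigl(u\cdot(g\circ f)\bigr)=u\cdot\bigl((Q g)\circ f\bigr)$, with $g$ produced by applying a Bell operator on $G$ to $h$ so that $g$ lies in a high-order $W^r_0(G)$ while $Qg=Qh=h$, then Lemma~\ref{lem-gof} to push $u\cdot(g\circ f)$ into a high-order Sobolev space on $D$, and finally Condition~R on $D$ to conclude $u\cdot(h\circ f)=P_D\bigl(u\cdot(g\circ f)\bigr)\in W^s(D)$ for every $s$. Two of your justifications, however, are misstated and should be repaired. First, Condition~R is \emph{not} the assertion that $P_D$ is bounded on $W^s(D)$; as established in Lemma~\ref{lem-miscprod}(3) it only gives, for each $s$, some $m(s)$ with $P_D:W^{s+m(s)}(D)\to W^s(D)$ bounded, i.e.\ there is a loss of derivatives. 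Exact Sobolev regularity of the Bergman projection is a strictly stronger property that is not available here. The fix is pure bookkeeping (and is how the paper proceeds): choose $g\in W^{s+j'(s)}_0(G)$ with $j'(s)=j(s+m(s))$, so that $u\cdot(g\circ f)\in W^{s+m(s)}_0(D)$ and $P_D$ of it lands in $W^s(D)$. With this choice the extra factor $\Phi^s_D$ you insert is not needed at all; one applies $P_D$ directly.

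Second, your appeal to Condition~R on $G$ (justified by ``Condition R on the factors'' of $G$, and by ``once we know $l=k$'') rests on hypotheses that are not in Theorem~\ref{thm-main}: Condition~R is assumed only for the factors of $D$, and $l=k$ is a conclusion proved much later. Fortunately this appeal is unnecessary. Since $h\in\mathcal{H}^\infty(G)\subset\mathcal{C}^\infty(\overline{G})$, it lies in $W^{r+N(r)}(G)$ for every $r$, so the \emph{unconditional} boundedness $\Phi^r_G:W^{r+N(r)}(G)\to W^r_0(G)$ from Lemma~\ref{lem-bellprod} already gives $g=\Phi^r_G h\in W^r_0(G)$; the clause of Result~\ref{res-bell} requiring Condition~R (or density of $\mathcal{H}^\infty$) concerns inputs that are merely in $\mathcal{H}^r$, and is not invoked here. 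Relatedly, the auxiliary function $\widetilde h$ ``obtained by approximating $h$'' is a detour: take $g=\Phi^r_G h$ directly. With these corrections your argument coincides with the paper's proof.
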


\begin{proof}We adapt the proof in \cite{B2}, \cite{Be1} to the present situation.
Let $P:L^2(D)\to \mathcal H(D)$ and $Q: L^2(G)\to  \mathcal H(G)$ denote the 
Bergman projections  on the domains $D$ and $G$ respectively. 
It is known (see \cite{Be1}) that the following identity holds for any function $g\in L^2(G)$:
\begin{equation}\label{eq-p1p2}
P(u\cdot(g\circ f))= u\cdot (Q(g)\circ f).
\end{equation}

\medskip

Fix a positive integer $s\geq 0$, and let $h\in \mathcal H^{\infty}(G)$ be a holomorphic 
function on $G$ smooth up to the boundary. Since $D$ satisfies Condition R, there is an $m(s)$ such that if $\phi\in W^{s+m(s)}(D)$,
then $P\phi\in W^s(D)$. This follows from the fact, noted in Section~\ref{sec-functions} above that the Fr\'{e}chet topology on
$\mathcal{C}^\infty(\overline{G}_1)$ is also given by the Sobolev norms $W^k(D)$, with $k\in \mathbb{N}$. Further by Lemma~\ref{lem-gof} above, there 
is an integer $j'(s)= j(s+m(s))$ such that if $\phi\in W^{s+j'(s)}_0(D)$ then $u\cdot (\phi\circ f) \in W^{s+m(s)}(D)$.

\medskip

Now let $g=\Phi^{s+j'(s)}h$, where $\Phi^k$ denotes the Bell operator of order $k$ on $G$, whose existence follows from Lemma~\ref{lem-bellprod}
and Result~\ref{res-bell} above. Since $h$ is smooth up to the boundary, it follows that $g\in W^{s+j'(s)}_0(G)$, which in turn implies, using the 
estimates of the last paragraph,  that $P(u\cdot (g\circ f))\in W^s(D)$.

\medskip

Therefore plugging in this $g$ into \eqref{eq-p1p2} above, we see that the left hand side is in $W^s(D)$ whereas the right hand side is equal to 
$ u \cdot (Q(g) \circ f) = u \cdot ((Q \Phi^{s + j'(s)} h) \circ f) = u\cdot (h\circ f)$ since $h$ is holomorphic. It follows then that 
$u\cdot (h\circ f)\in W^s(D)$ for each 
integer $s\geq 0$, and consequently $u\cdot (h\circ f)\in\mathcal{C}^\infty(\overline{D})$. By taking $h \equiv 1$, it follows that $u \in \mathcal C^{\infty}(\overline D)$.
\end{proof}

\subsection{Symmetric functions of the branches}
The next step is to prove that $u$ vanishes to finite order at each point on $\partial D$. Following \cite{Be1}, we first show that any elementary symmetric function of the various 
branches of $f^{-1}$ is well defined near $\partial G$. More precisely, in the situation of Theorem 1.1, we have the following:

\begin{prop}\label{prop-symfun}
Let $h \in \mathcal H^{\infty}(D)$. Let $F_1, F_2, \ldots, F_m$ be the branches of $f^{-1}$ which are locally well defined holomorphic functions on $G 
\setminus Z$. Then the elementary symmetric functions of $h \circ F_1, h \circ F_2, \ldots, h \circ F_m$ extend to holomorphic functions on $G$ which are in $\mathcal 
C^{\infty}(\overline G)$. 
\end{prop}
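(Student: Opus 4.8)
The plan is to follow the classical Bell--Ligocka argument for symmetric functions of the branches, adapted to the product setting via the machinery of Sections~\ref{sec-functions}--\ref{sec-estimates}. Fix $h\in\mathcal{H}^\infty(D)$. The branches $F_1,\dots,F_m$ of $f^{-1}$ are well defined and holomorphic on $G\setminus Z$, where $Z=\{f(z):\det f'(z)=0\}$ is a proper subvariety of $G$. For $1\le r\le m$, let $\sigma_r$ denote the $r$-th elementary symmetric function of $h\circ F_1,\dots,h\circ F_m$. Because the $\sigma_r$ are symmetric under monodromy around $Z$, each $\sigma_r$ is a single-valued holomorphic function on $G\setminus Z$; the first task is to show it extends holomorphically across $Z$, and the key point is to show it is \emph{bounded} near $Z$, after which the Riemann removable singularity theorem (applied across the codimension-one set $Z$, using that $G\setminus Z$ is dense and $\sigma_r$ is locally bounded) finishes the extension to $\mathcal{O}(G)$. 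Boundedness near $Z$ is immediate since $h$ is bounded on $\overline{D}$ and hence each $h\circ F_j$ is bounded, so each $\sigma_r$ is bounded on $G\setminus Z$ by a constant depending only on $m$ and $\|h\|_{L^\infty(D)}$.

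The substantive part is the smoothness up to $\overline{G}$. First I would record the transformation identity: for any $g\in L^2(D)$, one has $Q(g\circ F)\cdot(\text{Jacobian factors})$-type relations, but here the cleaner route, following \cite{Be1} and Lemma~\ref{lem1}, is to express $\sigma_r$ in terms of quantities already known to be smooth. Writing $u=\det f'$, the function $u\cdot(\phi\circ f)$ pushes forward under $f$ to $\sum_j \phi(F_j)\,|\det F_j'|^{?}$-type sums; more precisely, for $\phi\in\mathcal{C}^\infty_0(D)$ and any $\psi\in\mathcal{H}(G)$, the change-of-variables formula gives
\[
\int_G (u\cdot(\psi\circ f))\,\overline{\bigl(u\cdot(\text{?})\bigr)} \;=\; \sum_{j=1}^m \int_G \psi\cdot\overline{(\cdots)}\,,
\]
so that the ``averaged'' transform $\Lambda\phi := \sum_{j=1}^m (\phi\circ F_j)\cdot(\text{appropriate Jacobian of }F_j)$ satisfies $Q(\Lambda\phi) = $ the push-forward of $Pu\phi$, and one shows $\Lambda$ maps $W^{s+j(s)}_0(D)$ continuously to $W^s(G)$ by combining Lemma~\ref{lem-gof}, the distance estimate of Proposition~\ref{prop-dist} (to control negative powers of $d_1\circ F_j$ by powers of $d_2$), and the Bell operator of Lemma~\ref{lem-bellprod} together with the identity $Q\Phi^k=Q$. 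Feeding $h$ through the Bell operator $\Phi^{s+j'(s)}$ on $D$ exactly as in the proof of Lemma~\ref{lem1}, and using $Q\Phi=Q$ on the $G$ side, one concludes that each $\sigma_r\in W^s(G)$ for every $s$, hence $\sigma_r\in\mathcal{C}^\infty(\overline{G})$ by the Sobolev characterization of $\mathcal{C}^\infty(\overline{G})$ on the Lipschitz domain $G$ recalled in Section~\ref{sec-functions}.

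Concretely, the steps in order are: (i) define the branches and the symmetric functions, note single-valuedness on $G\setminus Z$ and uniform boundedness; (ii) invoke Riemann removable singularities to get $\sigma_r\in\mathcal{O}(G)\cap L^\infty(G)$; (iii) set up the averaging operator $\Lambda$ and the identity relating $Q\Lambda\phi$ to the push-forward of $Pu\phi$, using the change of variables for proper maps and \eqref{eq-p1p2}-type reasoning; (iv) prove the Sobolev mapping bound for $\Lambda$ by the same Cauchy-estimate plus distance-estimate argument as in Lemma~\ref{lem-gof}, now with $F_j$ in place of $f$ and invoking the \emph{left} half of Proposition~\ref{prop-dist} to bound $d_1(F_j(w))^{-N}$ by $d_2(w)^{-N/\eta}$, absorbed by a high power of $d_2$ coming from Taylor expansion of the compactly supported test function; (v) run $h$ through a Bell operator on $D$, apply $Q\Phi=Q$ on $G$, and conclude $\sigma_r\in W^s(G)$ for all $s$; (vi) conclude $\sigma_r\in\mathcal{C}^\infty(\overline{G})$. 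The main obstacle I expect is step (iv): keeping track of the interaction between the negative powers of $\dist(\cdot,\partial D)$ produced by differentiating the branches $F_j$ (which are only defined on $G\setminus Z$ and blow up along $Z$ as well as toward $\partial G$), and showing these are dominated after multiplication by $u$ and by the high power of $\dist(\cdot,\partial G)$ extracted from the test function — this is where Proposition~\ref{prop-dist}, Lemma~\ref{lem-gof}, and the boundedness of $u\cdot(h\circ f)$ from Lemma~\ref{lem1} must be combined carefully, and where the product structure enters only through the already-established facts that $D$ and $G$ satisfy Condition~R and admit Bell operators (Lemmas~\ref{lem-miscprod} and \ref{lem-bellprod}).
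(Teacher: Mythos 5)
Your reduction to ``holomorphic extension plus boundedness,'' and the removable-singularity step across $Z$, match the paper (which further reduces to the power sums $H_r=\sum_j(h\circ F_j)^r$ rather than the elementary symmetric functions themselves, a harmless difference). The gap is in your route to smoothness up to $\overline{G}$. Steps (iii)--(v) hinge on concluding $\sigma_r\in W^s(G)$ from an identity of the shape $\sigma_r=Q(\Lambda\phi)$ together with ``$Q\Phi=Q$ on the $G$ side.'' But the relation $Q\Phi^s=Q$ for a Bell operator says nothing about the Sobolev regularity of $Q$ itself; to get $Q(\Lambda\phi)\in W^s(G)$ you need the Bergman projection of $G$ to map some Sobolev space into $W^s(G)$, i.e.\ Condition R for $G$. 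That is not among the hypotheses of Theorem~\ref{thm-main}: only the factors of $D$ are assumed to satisfy Condition R, and nothing of the sort is known for the factors $G^j$. This is exactly the obstruction that forces the paper (following Bell) to use Kohn's \emph{weighted} projections $P_t$ with weight $e^{-t\abs{w}^2}$, which are exactly regular on every smoothly bounded pseudoconvex domain for $t$ large and which tensorize over the factors of $G$. The paper then never inverts any projection on $G$: it writes $\partial^\alpha H_r(z)=\langle H_r,K^\alpha_{t,z}\rangle_t$, pulls the pairing back to $D$ by the change of variables $w=f(z)$ (expanding $e^{-t\abs{w}^2}$ as a series in $w^\beta\overline{w}^\beta$ so that the weight transforms), and estimates the resulting pairings on $D$ by Lemma~\ref{lem-prodest} and Lemma~\ref{lem-EN}, using the uniform bound on $\norm{K^\alpha_{t,z}}_{-s}$ supplied by Kohn's theorem.

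A second, related problem is your step (iv). Your operator $\Lambda$ involves the branches $F_j$ and their derivatives, which blow up along the branch locus $Z$ and not only near $\partial G$; the distance to $Z$ is not controlled by $\dist(\cdot,\partial G)$, so the Cauchy-estimate-plus-Proposition~\ref{prop-dist} scheme of Lemma~\ref{lem-gof} does not transfer to $\Lambda$ as stated, and for non-holomorphic $\phi$ the symmetrization over branches does not remove the singularity along $Z$. The paper's pairing argument sidesteps this entirely: no derivative ever falls on a branch $F_j$ --- the differentiation $\partial^\alpha$ is absorbed into the kernel in the parameter $z$, and on $D$ one only manipulates the globally defined holomorphic objects $u$, $f^\beta$, $h^r$ and $K^\alpha_{t,z}\circ f$. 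If you want to salvage your outline, the essential missing ingredients are Kohn's weighted projections on the factors of $G$ and the duality estimates of Lemma~\ref{lem-EN} and Lemma~\ref{lem-prodest}.
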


\noindent The proof of Proposition~\ref{prop-symfun} will require the following result (cf. \cite[Fact~2]{Be1})

\begin{lem}\label{lem-EN} For each integer $s\geq 0$, there is a positive integer $N=N(s)$ and a positive constant $C=C(s)$ such that
\[ \norm{v\cdot (g\circ f)}_{-N} \leq C \;\sup_{z\in D}\abs{v(z)}\norm{g}_{-s}\]
for any $v\in \mathcal{H}(D) $ and any $g\in \mathcal{H}(G)$.
\end{lem}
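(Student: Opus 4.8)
\textbf{Proof proposal for Lemma~\ref{lem-EN}.}

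The plan is to dualize: the negative Sobolev norm $\norm{v\cdot(g\circ f)}_{-N}$ is computed by pairing against smooth compactly supported test functions $\phi$ on $D$ with $\norm{\phi}_N=1$, and the goal is to move the pairing over to $G$ so that it becomes a pairing of $g$ against something built from $\phi$ by pushing forward along $f$. Concretely, for $\phi\in\mathcal{C}^\infty_0(D)$ we want to write $\int_D v\cdot(g\circ f)\overline{\phi}$ as $\int_G g\cdot\overline{\Psi}$ for a suitable $\Psi$ on $G$ constructed from $\phi$, and then estimate $\norm{\Psi}_s$ by $C\norm{\phi}_N$. The natural candidate for $\Psi$ comes from the change-of-variables formula for the proper map $f$: since $f$ is an $m$-sheeted branched cover, for any integrable $\psi$ on $D$ one has $\int_D \psi\,\abs{u}^2\,dV = \int_G\Big(\sum_{l=1}^m \psi\circ F_l\Big)dV$, where the $F_l$ are the local inverse branches on $G\setminus Z$ (and $Z$ has measure zero). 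Applying this with $\psi = v\cdot(g\circ f)\overline{\phi}/u$ — noting $g\circ f\circ F_l = g$ — suggests taking $\Psi = \overline{u}^{-1}\sum_{l}(v\circ F_l)\,(\overline{\phi}\circ F_l)$, i.e. up to conjugation $\Psi$ is $\overline{u}$ times the symmetrized pull-back of $v\overline{\phi}$. One must be slightly careful: $1/u$ is singular on $Z$, so it is cleaner to first write $\int_D v\cdot(g\circ f)\overline{\phi} = \int_D \big(v\overline{\phi}/\abs u^2\big)\,\overline{(g\circ f)}^{\,} \cdot \abs u^2$ — actually, since $g\circ f$ need not be conjugate-friendly, the right bookkeeping is: $\int_D v\cdot(g\circ f)\,\overline\phi\,dV$; multiply and divide by $\abs u^2$ to get $\int_D \frac{v\overline\phi}{\overline u}\cdot (g\circ f)\cdot \overline u \,dV$, hmm — I will instead use the standard device of Bell: pair with $\overline u\psi$ for $\psi$ ranging over the appropriate space, absorbing the Jacobian.

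The cleaner route, which I would actually carry out, is the one used by Bell: reduce to pairings against functions of the form $\overline{u}\,(\phi\circ f)$ with $\phi\in\mathcal{C}^\infty_0(G)$. First, by the density of $\mathcal{H}^\infty(D)$ in $\mathcal{H}(D)$ (Lemma~\ref{lem-miscprod} gives Condition R on $D$, hence this density) and similarly for $G$, it suffices to prove the estimate for $v\in\mathcal H^\infty(D)$, $g\in\mathcal H^\infty(G)$. Then by the change of variables formula, for $\phi\in\mathcal C^\infty_0(G)$,
\[
\int_D \big(v\cdot(g\circ f)\big)\,\overline{\big(\overline u\,(\phi\circ f)\big)}\,dV
= \int_D v\,u\,(g\circ f)\,\overline{(\phi\circ f)}\,dV
= \int_G \Big(\sum_{l=1}^m (v\,u)\circ F_l\Big)\,g\,\overline\phi\,dV,
\]
wait — this moves $g$ outside but leaves $\sum_l(vu)\circ F_l$, and $u\circ F_l$ is not obviously bounded. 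The correct weighting (Bell's) is to note $u\circ F_l = 1/\det(F_l')$ only on $G\setminus Z$ and the product $u\,(\overline u\circ f)$ has modulus $\abs u^2$; the symmetrization $\sum_l \abs{u\circ F_l}^2 = 1$ as a consequence of the transformation rule for the Jacobian. So one pairs $v\cdot(g\circ f)$ against $\overline{u}\cdot\overline{(\phi\circ f)}$ and uses $\int_D v\,u\,(g\circ f)\overline{(\phi\circ f)}\,dV = \int_G \big(\sum_l v\circ F_l\big)\,g\,\overline\phi\,dV$ because $u\,dV$ pulls back $dV$ correctly (this is the precise statement of Bell's change of variables, where the unweighted sum appears exactly because of the $u$ factor). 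Thus, setting $V^{\#}:=\sum_{l=1}^m v\circ F_l$, which is the first elementary symmetric function of the branches and extends holomorphically across $Z$ to $G$ (this is the standard Riemann-removable-singularity argument: it is bounded near $Z$ since $\abs v$ is bounded on $D$, and holomorphic off the codimension-one set $Z$), we get
\[
\Big|\int_D \big(v\cdot(g\circ f)\big)\,\overline{\big(\overline u\,(\phi\circ f)\big)}\,dV\Big|
= \Big|\int_G V^{\#}\,g\,\overline\phi\,dV\Big|
\le \big(\sup_D\abs v\big)\cdot\Big|\int_G g\,\overline{\phi}\,dV\Big|\cdot m,
\]
hmm, more precisely $\big|\int_G V^\#\, g\,\overline\phi\big|$ — here I want $g$ paired against something whose $W^s(G)$-norm is controlled, so rewrite as $\int_G g\,\overline{(\overline{V^\#}\phi)}$ and bound by $\norm{g}_{-s}\,\norm{\overline{V^\#}\phi}_s \le \norm{g}_{-s}\cdot C\,\sup_D\abs v\cdot\norm{\phi}_s$, using that $V^\#\in\mathcal C^\infty(\overline G)$ (so multiplication by $\overline{V^\#}$ is bounded on $W^s(G)$ with operator norm $\le C_s\sup\abs v$ — one needs $V^\#$ and its derivatives up to order $s$ bounded on $\overline G$, which follows from $\sup_D\abs v\le\sup_D\abs v$ for the $0$-th order term and, for higher derivatives, from the a priori boundedness of the symmetric functions once we know they are smooth; to keep the constant proportional to $\sup_D\abs v$ one can alternatively first prove it for the model case and use scaling $v\mapsto \lambda v$).

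The remaining point is to pass from test functions of the special form $\overline u\,(\phi\circ f)$ back to arbitrary test functions on $D$, at the cost of raising the Sobolev index from $s$ to some $N=N(s)$. This is exactly where Lemma~\ref{lem-gof} (with its Bell-operator input, Lemma~\ref{lem-bellprod} and Result~\ref{res-bell}) enters: given arbitrary $\phi\in\mathcal C^\infty_0(D)$ with $\norm\phi_N=1$, one does not pair directly with it but first applies a Bell operator $\Phi$ on $D$ so that $P(\overline{u}\cdot\overline{\cdots})$ — actually the clean statement is that $u\cdot(g\circ f)\in\mathcal H(D)$, so its pairing against $\phi$ equals its pairing against $P\phi$, and then against $\Phi^s(P\phi)\in W^s_0(D)$ which by Result~\ref{res-bell} (applied on the product domain $D$ via Lemma~\ref{lem-bellprod}) and Lemma~\ref{lem-gof} run in reverse has the form needed to be written, modulo a bounded operator, as $\overline u\,(\psi\circ f)$ with $\norm\psi$ controlled; chasing the indices gives $N(s)$. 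I expect the main obstacle to be precisely this last index-chasing step — organizing the composition of (i) the Bell operator on $D$, (ii) the identity $P(u\cdot(g\circ f))=u\cdot(Q g\circ f)$ from \eqref{eq-p1p2}, and (iii) the change-of-variables/symmetrization — so that the test function on $D$ is converted into a test function on $G$ with a clean bound $\norm{\cdot}_s \le C\norm\phi_N$, while keeping the constant linear in $\sup_D\abs v$. The removable-singularity claim for $V^\#$ across $Z$ and the boundedness of $\abs{u\circ F_l}^2$ summing to $1$ are standard and should be dispatched quickly.
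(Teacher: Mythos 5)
Your approach is genuinely different from the paper's, and as written it has three gaps, two of them serious. First, the change-of-variables identity at the heart of your argument is incorrect: substituting $z=F_l(w)$ in $\int_D v\,u\,(g\circ f)\,\overline{(\phi\circ f)}\,dV$ produces the real Jacobian factor $\abs{\det F_l'}^2$, and since $u\circ F_l=1/\det F_l'$ only one of the two factors cancels, leaving
\[
\int_G \Bigl(\sum_{l=1}^m \overline{\det F_l'}\,(v\circ F_l)\Bigr)\,g\,\overline{\phi}\,dV,
\]
not $\int_G\bigl(\sum_l v\circ F_l\bigr)g\,\overline{\phi}\,dV$. The weight $\overline{\det F_l'}=1/\overline{(u\circ F_l)}$ is neither holomorphic nor bounded near $Z$, so the object you must control is not the elementary symmetric function $V^{\#}$, and the claim that "the unweighted sum appears because of the $u$ factor" only holds when the integrand apart from $\abs{u}^2$ is a pullback from $G$. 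Second, even after repairing this, bounding $\norm{\overline{V^{\#}}\phi}_s$ requires $V^{\#}$ to lie in $\mathcal{C}^\infty(\overline{G})$ with derivatives up to order $s$ controlled linearly by $\sup_D\abs{v}$; but smoothness up to $\partial G$ of symmetric functions of the branches is precisely Proposition~\ref{prop-symfun}, which the paper proves \emph{using} Lemma~\ref{lem-EN}, so your argument is circular. (Linearity of $v\mapsto V^{\#}$ gives homogeneity under scaling, not the uniform bound over the sup-norm unit ball that you need.) Third, you explicitly leave unfinished the reduction from arbitrary $\phi\in\mathcal{C}^\infty_0(D)$ with $\norm{\phi}_N=1$ to test functions of the special form $\overline{u}\,(\psi\circ f)$; this is not routine index-chasing, since the identity \eqref{eq-p1p2} transports functions from $G$ to $D$ and nothing in the paper's toolkit produces such a representation of $P\phi$.

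The paper's proof is short and entirely pointwise, with no duality, branches, or Bell operators. By Lemma~\ref{lem-prodest}(2), negative Sobolev norms of holomorphic functions are equivalent, up to index shifts by $n$, to weighted sup-norms $\sup\abs{\cdot}\,d(\cdot)^{t}$; hence $\norm{v\cdot(g\circ f)}_{-N}\leq C\sup_{z\in D}\abs{v(z)g(f(z))}\,d_1(z)^{n+N+1}$ and $\abs{g(w)}\leq C\norm{g}_{-s}\,d_2(w)^{-s-n}$, and the lower bound $\dist(f(z),\partial G)\geq C^{-1}\dist(z,\partial D)^{1/\eta}$ from Proposition~\ref{prop-dist} makes $d_2(f(z))^{-s-n}d_1(z)^{n+N+1}$ uniformly bounded once $N$ is chosen with $n+N>(s+n)/\eta$. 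I recommend abandoning the duality route for this lemma.
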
 

\begin{proof} We use the left half of Proposition~\ref{prop-dist}, i.e.,   $C^{-1} \dist(z,\partial D)^{\frac{1}{\eta}}\leq\dist(f(z), \partial G)$,
for some $C>0$ and $0<\eta<1$. Let $d_1(\cdot)= \dist(\cdot, \partial D)$ and $d_2(\cdot)= \dist(\cdot,\partial G)$, and 
choose $N$ so large that $n+N>(s+n)/\eta$. By successively using the two halves of \eqref{eq-est1}, we get:

\begin{align*}
\norm{v\cdot (g\circ f)}_{-N} &\leq C \sup_{z\in D}\abs{v(z)g(f(z))}d_1(z)^{n+N+1}\\
&\leq C \sup_{z\in D} \abs{v(z)} \norm{g}_{-s} \sup_{z\in D} \left( d_2(f(z))^{-s-n}d_1(z)^{n+N+1}\right)\\
&\leq C \norm{g}_{-s}\sup_{z\in D}\abs{v(z)}.
\end{align*}
\end{proof}

\begin{proof}[Proof of Proposition~\ref{prop-symfun}] For a positive integer $r$, set
\[ H_r = \sum_{j=1}^m (h\circ F_j)^r.\]
Since the elementary symmetric functions of $\{h\circ F_j\}_{j=1}^m$ may be written as polynomials in the 
functions $H_r$, it suffices to show that $H_r\in \mathcal{H}^\infty(\overline{G})$. Note that
$H_r$ is bounded and holomorphic in the complement of  $\{f(z) : u(z)=0\}$ (which is an analytic variety by  Remmert's proper mapping theorem),
so by the Riemann removable 
singularity theorem $H_r$ extends as a holomorphic function on $G$. In order to prove that $H_r\in \mathcal{C}^\infty(\overline{G})$, we will 
prove that for each multi-index $\alpha$, the  function $\partial^\alpha H_r / \partial z^\alpha$
is bounded on $G$.

\medskip

Recall that $G = G^1 \times G^2 \times \ldots \times G^l$, where each $G^j$ is smoothly bounded and pseudoconvex. 
Thanks to a classical result of Kohn (\cite{Ko}), for each integer $s\geq 0$, there is a $t_j>0$ such that for $t\geq t_j$, the projection $P_t^j$ from
$L^2(G^j)$ to $\mathcal{H}(G^j)$ in the inner product

\[ \langle v, g \rangle_{G_j, t} = \int_{G_j}v(z) \overline{g(z)}\exp(-t\abs{z}^2)\]

maps $W^s(G^j)$ to $\mathcal{H}^s(G^j)$. Let $t_*(s) = \max_{1\leq j\leq l} t_j(s)$. Then for $t> t_*$, the map 

\[ P_t = P^1_t\csor P^2_t\csor \dots \csor P^l_t\]

is the orthogonal projection from $L^2(G)$ to $\mathcal{H}(G)$ under the inner product

\[ \langle v, g \rangle_t =\int_G v(z) \overline{g(z)} \exp(-t\abs{z}^2),\]

which maps the space $\pss^s(G)$ to $\widetilde{\mathcal{H}}^s(G)= \widetilde{W}^s(G)\cap \mathcal{O}(G)$. Let $K_{t,z}$ be the Bergman kernel associated to $P_t$, i.e., 

\[ P_t \phi(z) = \langle \phi, K_{t,z}\rangle_t\]

for $\phi\in L^2(G)$. Let

\[ K^\alpha_{t,z}(w) = \frac{\partial}{\partial \overline{z}^\alpha} K_{t,z}(w).\]

Then 

\[ \frac{\partial}{\partial z^\alpha}(P_t \phi)(z) = \langle \phi,  K^\alpha_{t,z}\rangle_t.\]

Now if $s>\abs{\alpha}+n$, and $\phi\in\mathcal{C}^\infty_0(G)$, we have $P_t\phi\in \mathcal{H}^s(G)$,
so by  Sobolev embedding, the derivative $\frac{\partial^\alpha}{\partial z^\alpha}P_t \phi(z)$ is bounded independently of
$z$, if $\norm{\phi}_s$ is bounded. Hence
 
\begin{align*}
\norm{K^\alpha_{t,z}}_{-s} &= \sup_{\substack{\phi\in \mathcal{C}^\infty_0(G)\\\norm{\phi}_s=1}}\abs{\langle K^\alpha_{t,z},\phi\rangle_0}\\
&= \sup_{\substack{\phi\in \mathcal{C}^\infty_0(G)\\\norm{\phi}_s=1}}
\abs{\frac{\partial^\alpha}{\partial z^\alpha}P_t\left(e^{t\abs{z}^2}\phi\right)(z)}
\end{align*} 

is bounded independently of $z$. We now write 

\[ \exp(-t \abs{w}^2) = \sum_{\beta} c_\beta w^\beta \overline{w}^\beta\]

and note that

\begin{align*} \frac{\partial^\alpha}{\partial z^\alpha}H_r(z)&= \langle H_r, K^\alpha_{t,z}\rangle_t\\
&= \int_G H_r \overline{K^\alpha_{t,z}} \exp(-t \abs{w}^2)\\
&=\int_D \abs{u}^2 h^r \overline{K^\alpha_{t,z}\circ f} \exp(-t \abs{f}^2)\\
&=\sum_\beta c_\beta \int_D u f^\beta h^r\cdot \overline{u f^\beta (K^\alpha_{t,z}\circ f)}.\\
\end{align*}

For our fixed $s$, let $N$ be as in Lemma~\ref{lem-EN}. By using the first estimate in Lemma~\ref{lem-prodest} we have that

\begin{align*}\abs{\frac{\partial^\alpha}{\partial z^\alpha}H_r(z)}&=\sum_\beta\abs{c_\beta}\norm{u f^\beta h^r}_{kN}
\norm{u f^\beta \left(K^\alpha_{t,z}\circ f\right)}_{-N}\\
&\leq C \sum_{\beta}\abs{c_\beta} \norm{u f^\beta}_{kN} \times \sup_{z\in D} \abs{u f^\beta} \norm{K^\alpha_{t,z}}_{-s}\\
&\leq C \sum_{\beta} \abs{c_\beta} \norm{u f^\beta}_{kN}\times \sup_{z\in D} \abs{u f^\beta}\\
&\leq C\sum_{\beta} \abs{c_\beta} \norm{u f^\beta}_{kN+n}^2
\end{align*}

where we have used the facts that $h$ is smooth up to the boundary  (second line),  the estimate from Lemma~\ref{lem-EN} (second line),
the fact that  $\norm{K^\alpha_{t,z}}_{-s}$ is bounded as a function of $z$, and Sobolev embedding (last line).  Using the argument of 
Lemma~\ref{lem1}, we see that  $\norm{u f^\beta}_{kN+n}\leq C \norm{w^\beta}_{kN+n+Q}$ for a fixed $Q$ (independent of $\beta$.) 
Applying the Cauchy estimates to a ball of large radius $R$ containing the domain $G$, we see that 
$\norm{w^\beta}_{kN+n+Q}\leq C R^{\abs{\beta}}$. Therefore,
\begin{align*}\abs{\frac{\partial^\alpha}{\partial z^\alpha}H_r(z)}&\leq C \sum_{\beta} \abs{c_\beta} R^{2\beta}\\
&\leq C \exp\left(ntR^2\right)
\end{align*}
uniformly in $z\in G$. 
\end{proof}

\subsection{The proof continued}
As a consequence we have that

\begin{lem}
$u$ vanishes to finite order at every point of $\partial D$.
\end{lem}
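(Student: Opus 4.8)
The goal is to show that the complex Jacobian determinant $u = \det f'$ vanishes to finite order at each boundary point of $D$. By Lemma~\ref{lem1} we already know $u \in \mathcal{C}^\infty(\overline{D})$, so the issue is to rule out $u$ vanishing to infinite order — equivalently, to rule out $u$ vanishing identically on a neighborhood of some boundary point in $\overline{D}$ (in the Whitney sense, all derivatives zero). The standard Bell--Catlin argument proceeds by contradiction: suppose $u$ vanishes to infinite order at $p \in \partial D$. Since $u$ is holomorphic and not identically zero on the connected domain $D$ (it is the Jacobian of a proper, hence generically locally biholomorphic, map between equidimensional domains), $u^{-1}(0)$ is a proper analytic subvariety of $D$, so the set where $u$ vanishes to infinite order, being a closed subset of $\partial D$, is nontrivial only on the boundary.

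The key mechanism is to exploit the symmetric-function machinery of Proposition~\ref{prop-symfun}. Take coordinate functions (or more generally $h \in \mathcal{H}^\infty(D)$); the branches $F_1,\dots,F_m$ of $f^{-1}$ satisfy that each coordinate $z_j$ is a root of a monic polynomial of degree $m$ in one variable whose coefficients are (up to sign) the elementary symmetric functions of $h \circ F_1, \dots, h \circ F_m$, and by Proposition~\ref{prop-symfun} these coefficients lie in $\mathcal{H}^\infty(\overline{G})$. Pulling back by $f$, we learn that for each $h$, $h$ itself satisfies a monic polynomial identity of degree $m$ over the ring generated by $\{ (\text{sym.\ fn.}) \circ f \} \subset \mathcal{C}^\infty(\overline{D})$. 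Differentiating such an identity and examining it along a curve approaching $p$ non-tangentially, one sees that if $u$ vanished to infinite order at $p$ one could force a contradiction with the fact that $f$ is non-degenerate (its derivative is invertible off $u^{-1}(0)$). Concretely, I would follow Bell's argument: the functions $u \cdot (h \circ f)$ and their products realize, after the substitution above, that $u$ divides a collection of smooth functions in a way incompatible with $u$ having a zero of infinite order unless $f'$ degenerates on an open set — which is impossible for a proper map.

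More precisely, here is the route I would take. First, record that $u \not\equiv 0$ and that $Z = f(u^{-1}(0))$ is a proper analytic subvariety of $G$ (Remmert), so $f : D \setminus u^{-1}(0) \to G \setminus Z$ is an $m$-sheeted covering. Second, apply Proposition~\ref{prop-symfun} to each of the $n$ coordinate functions $h = z_\ell$ restricted to $D$; this gives that the coefficients $\sigma_1^{(\ell)},\dots,\sigma_m^{(\ell)}$ of $\prod_{j=1}^m (T - (z_\ell \circ F_j))$ are in $\mathcal{H}^\infty(\overline{G})$. Composing with $f$, each coordinate function $f_\ell$ on $D$ satisfies
\[
f_\ell^m - (\sigma_1^{(\ell)}\circ f) f_\ell^{m-1} + \dots \pm (\sigma_m^{(\ell)}\circ f) = 0
\]
identically on $D$, with all $\sigma_i^{(\ell)} \circ f \in \mathcal{C}^\infty(\overline{D})$. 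Third — this is the crux — differentiate these $n$ identities and assemble them into a matrix equation showing that $\det f'$ (times a nonzero power-series-type factor coming from the discriminant of the polynomials) agrees, on a full neighborhood in $\overline{D}$ of any boundary point, with an expression built from boundary-smooth functions whose vanishing order is governed by the $\sigma_i^{(\ell)}\circ f$; since $f$ is finite-to-one, the discriminant is not identically zero, and one concludes $u$ cannot vanish to infinite order at $p$ without the $\sigma_i^{(\ell)}$ themselves degenerating at $f(p)$, contradicting properness (or one runs Bell's original reflection-style contradiction directly).

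\textbf{Main obstacle.} The hard part is the third step: turning the polynomial identities for the branches into quantitative control on the vanishing order of $u$ itself. In the smoothly bounded case this is Bell's argument (\cite{Be1}), and it uses that $\partial D$ is a manifold so one can differentiate transversally to the boundary; here $\partial D$ has corners, so one must instead work with the Whitney-$\mathcal{C}^\infty$ structure on $\overline{D}$ and argue at boundary points of all strata. I expect this is handled by reducing to the generic boundary stratum (the smooth part of $\partial D$, where $\partial D$ looks locally like a single smooth hypersurface of one of the factors crossed with open pieces of the others) where Bell's argument applies verbatim, and then using density/continuity of derivatives to propagate finite vanishing order to the whole boundary. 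I would also need the elementary fact that a function in $\mathcal{C}^\infty(\overline{D})$ vanishing to infinite order at a point $p$ but not identically zero cannot be holomorphic and have $p$ in the closure of its zero set with the right multiplicity structure — this is where holomorphicity of $u$ and the identity theorem enter to close the loop.
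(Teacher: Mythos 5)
You have the right first ingredient --- applying Proposition~\ref{prop-symfun} to the coordinate functions to get the monic pseudopolynomials $P_j(z,w)=\prod_{k=1}^m\bigl(z_j-(F_k(w))_j\bigr)$ with coefficients in $\mathcal{O}(G)\cap\mathcal{C}^\infty(\overline{G})$ and $P_j(z,f(z))=0$ --- but the step you label as the crux is where your argument stops being a proof, and the mechanism you propose (differentiating the identities, assembling a matrix equation, invoking the discriminant) is not the one that works and is left entirely unsubstantiated. The paper's actual use of the pseudopolynomials is not algebraic but \emph{metric}: by the quantitative continuity of roots of monic polynomials, if $|w|\le\epsilon^{m+1}$ then $P_j(z,w)\ne 0$ whenever $|z_j|=C\epsilon$, and an analytic-continuation/path-lifting argument then shows that a full component of $f^{-1}\bigl(B(\epsilon^{m+1})\cap(G\setminus Z)\bigr)$ is trapped inside the polydisc $\Delta(C\epsilon)\cap D$ around $p$. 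Since $|u|^2$ is the real Jacobian of $f$, the change-of-variables formula gives $\int_{\Delta(C\epsilon)\cap D}|u|^2\ge \mathrm{Volume}\bigl(B(\epsilon^{m+1})\cap G\bigr)\ge c\,\epsilon^{2n(m+1)}$, while the left side is at most $C\epsilon^{2n}\sup_{\Delta(C\epsilon)\cap D}|u|^2$; hence $\sup_{\Delta(C\epsilon)\cap D}|u|\ge C'\epsilon^{mn}$, which is a polynomial lower bound incompatible with infinite-order vanishing at $p$. This volume comparison is the missing idea; your discriminant route would need you to explain how non-vanishing of a discriminant controls the vanishing order of $\det f'$ at a boundary point, and no such control is visible in what you wrote.

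Your fallback strategy in the final paragraph also fails: reducing to the smooth stratum of $\partial D$ and then ``propagating finite vanishing order by density/continuity of derivatives'' does not work, because finite vanishing order at nearby regular boundary points gives no uniform bound --- the order can blow up as those points approach a corner, and infinite-order vanishing at $p$ only asserts that all derivatives vanish \emph{at} $p$, which is perfectly consistent with finite-order vanishing arbitrarily close by. One of the virtues of the volume argument is precisely that it requires nothing from the boundary geometry at $p$ beyond the lower bound $\mathrm{Volume}(B(\delta)\cap G)\gtrsim\delta^{2n}$, which holds at every point of the boundary of a product of smooth (indeed Lipschitz) domains, so no stratification or propagation is needed. (A small side remark: vanishing to infinite order at $p$ is not equivalent to vanishing identically on a neighborhood of $p$ in $\overline{D}$, as your opening paragraph suggests.)
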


\begin{proof}
The proof of this fact has been explained in \cite{Be2} and \cite{BC} for smoothly bounded domains. The argument for product domains or more generally Lipschitz domains is not different 
once we know that Proposition~\ref{prop-symfun} holds. Nevertheless, for the sake of completeness here are some details. Let $(F_k(w))_j$ denote the $j$--th component of the branch $F_k(w)$. By 
taking $h$ to be the coordinate functions in the above proposition, we see that the following pseudopolynomials 
\[
P_j(z, w) = \prod_{k = 1}^m \left( z_j - (F_k(w))_j  \right)
\]
which are monic in the variable $z_j$ have coefficients that are in $\mathcal O(G) \cap \mathcal C^{\infty}(\overline G)$. Note that $P_j(z, f(z)) = 0$ for all $1 \le j \le n$ since the 
graph of $f$ is an irreducible component of the variety defined by the vanishing of the $P_j$'s. Let $p \in \partial D$ be such that $u(p) = 
0$. Choose a sequence $\{p_l\} \in D$ converging to $p$ such that $u(p_l) \not= 0$ for all $l$. By passing to a subsequence we may assume that $f(p_l) \rightarrow p'\in \partial G$ and 
further that both $p, p'$ are the origins in $\mathbb C^n$. Thus $P_j(p_l, f(p_l)) = 0$ for all $1 \le j \le n$ and by letting $l \rightarrow \infty$ we get that $P_j(0, 0) = 0$. Since 
the coefficients of these pseudopolynomials are smooth up to $\partial G$, we may appeal to a quantitative version of the continuity of roots of monic polynomials (for example, see 
\cite{Ch} -- Chapter 1, Section 4) to conclude that for every $\epsilon > 0$, there exists a uniform constant $C > 0$ which is independent of $\epsilon$ such that if $\vert w \vert \le 
\epsilon^{m + 1}$ then $P_j(z, w) \not= 0$ for all $z$ with $\vert z_j \vert = C \epsilon$. 

\medskip

Let $\Delta(\epsilon)$ be the polydisc of polyradius $(\epsilon, \epsilon, \ldots, \epsilon)$ around $p = 0$ and let $B(\epsilon)$ be the ball of radius $\epsilon$ around $p'= 0$. 
We claim that $B(\epsilon^{m + 1}) \cap (G \setminus Z) \subset f(\Delta(C \epsilon) \cap D)$ for sufficiently small $\epsilon$, where $Z$ is as in the proof of Lemma 3.5. To show this, 
pick $w \in B(\epsilon^{m +1}) \cap (G \setminus Z)$ and let $\gamma : [0, 1] \rightarrow  B(\epsilon^{m +1}) \cap (G \setminus Z)$ be a path such that $\gamma(0) = f(p_{l_0})$ for some 
large fixed $l_0$ and $\gamma(1) = w$. This is possible since $f(p_l) \rightarrow p'= 0$. Let $F$ be a branch of $f^{-1}$ that is defined near $f(p_{l_0})$ and which maps it to 
$p_{l_0}$. Then $F$ admits analytic continuation along $\gamma$ and for each $t\in [0, 1]$ we have $P_j(F(\gamma(t)), \gamma(t)) = 0$ for $1 \le j \le n$. But it has been noted above 
that if $z \in \partial \Delta(C \epsilon)$ and $w \in B(\epsilon^{m + 1}) \cap G$, then at least one of the $P_j(z, w) \not= 0$. Therefore the continuous curve $F \circ \gamma$ 
cannot move out of $\Delta(C \epsilon) \cap D$. If we let $z = F(\gamma(1))$ then $z \in \Delta(C \epsilon) \cap D$. This means that at least one component of 
$f^{-1}(B(\epsilon^{m+1}) \cap (G \setminus Z))$ is contained in $\Delta(C \epsilon) \cap D$ which clearly implies the claim.

\medskip

To conclude, note that $\vert u \vert^2$ is the real Jacobian determinant of $f$ when viewed as a map from $\mathbb R^{2n}$ to itself and by the claim we see that
\[
\int_{\Delta(C \epsilon) \cap D} \vert u \vert^2 \ge {\rm Volume}(B(\epsilon^{m+1}) \cap G)
\]
as $Z$ has zero $2n$-dimensional volume. The integral can be dominated by $\epsilon^{2n}$ times the supremum of $\vert u \vert^2$ on $\Delta(C \epsilon) \cap D$ up to a uniform 
constant and the volume of $B(\epsilon^{m+1}) \cap G$ is greater than a uniform constant (which depends only on $G$) times $\epsilon^{2n(m+1)}$. Putting all this together, there is a 
constant $C'> 0$ independent of $\epsilon$ such that
\[
\sup_{\Delta(C \epsilon) \cap D} \vert u \vert \ge C'\epsilon^{mn}
\]
which shows that $u$ cannot vanish to infinite order at $p$.
\end{proof}

\noindent Next we show the following weaker version of conclusion (i) of Theorem~\ref{thm-main}:
\begin{lem}
The proper map $f:D\to G$  extends to a continuous map  from  $\overline{D}$ to  $\overline{G}$. 
\end{lem}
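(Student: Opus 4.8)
The plan is to leverage the finite-order vanishing of the Jacobian $u$ together with the Bergman-theoretic regularity already established, following the Bell--Catlin--Diederich--Fornaess scheme adapted to product domains. First I would recall from Lemma~\ref{lem1} that $u\cdot (h\circ f)\in\mathcal{C}^\infty(\overline D)$ for every $h\in\mathcal{H}^\infty(G)$. Taking $h$ to be (polynomial approximations of) the coordinate functions $w_j$ on $G$, we get that each $u\cdot f_j$ extends smoothly to $\overline D$; in particular $u$ and $u\cdot f_j$ are continuous on $\overline D$. The map $f_j = (u\cdot f_j)/u$ is then a ratio of functions continuous up to the boundary, and the difficulty is precisely that $u$ vanishes on a subset of $\partial D$.

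Next I would use the preceding lemma, which says $u$ vanishes to \emph{finite} order at every point of $\partial D$. Combined with the fact (Proposition~\ref{prop-dist}) that $\dist(f(z),\partial G)\lesssim \dist(z,\partial D)^\eta$, this gives a quantitative lower bound: near a boundary point $p$, one has $\abs{u(z)}\gtrsim d(z)^M$ for some integer $M$ (locally), while the numerator $u\cdot f_j$ enjoys Taylor estimates. The standard argument is to show that $f$ is in fact Hölder continuous up to $\overline D$: fix $p\in\partial D$, and show that $f(z)$ has a limit as $z\to p$. One way is to argue by contradiction — if $f$ does not extend continuously at $p$, take two sequences $z_l, z_l'\to p$ with $f(z_l)\to q\neq q'\leftarrow f(z_l')$; then the pseudopolynomial relations $P_j(z,f(z))=0$ from the previous lemma, whose coefficients are in $\mathcal{C}^\infty(\overline G)$... wait, those are in $\mathcal{C}^\infty(\overline G)$ in the \emph{target} variable. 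Actually the cleaner route, following \cite{Be1,B2,BC}, is: since $u\cdot f_j\in\mathcal{C}^\infty(\overline D)$ and $u$ vanishes to finite order, standard estimates (e.g.\ using that a holomorphic function which is $O(d^N)$ at the boundary together with derivative bounds) show that $f_j$ extends Hölder-continuously; alternatively one invokes that the graph of $f$ is relatively compact and uses the pseudopolynomials $P_j(z,w)$ from the previous lemma, now reversing roles via the branches of $f^{-1}$.

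Concretely, I would proceed as follows. Consider the branches $F_1,\dots,F_m$ of $f^{-1}$ on $G\setminus Z$ and the pseudopolynomials $P_j(z,w)=\prod_{k}(z_j-(F_k(w))_j)$, whose coefficients lie in $\mathcal{O}(G)\cap\mathcal{C}^\infty(\overline G)$ by Proposition~\ref{prop-symfun}. Fix $p'\in\partial G$; near $p'$ the coefficients of $P_j(\cdot,w)$ are continuous, so the zero set of $w\mapsto P_j(\cdot,w)$ varies continuously, and the roots (the branch values) stay in a bounded region. Given a sequence $z_l\to p\in\partial D$, pass to a subsequence with $f(z_l)\to p'\in\partial G$. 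For any other sequence $z_l'\to p$ with $f(z_l')\to p''\in\partial G$, both $p'$ and $p''$ are accumulation points of $f$ near $p$. Using the finite-order vanishing of $u$ at $p$ together with Proposition~\ref{prop-dist} and the smoothness of $u\cdot f_j$ on $\overline D$, one shows $p'=p''$: the local estimate $\abs{u\cdot f_j(z)-u\cdot f_j(p)}\lesssim\abs{z-p}$ combined with $\abs{u(z)}\gtrsim d(z)^M\gtrsim\abs{z-p}^M$ forces $f_j(z)$ to have a limit, i.e.\ $f$ extends continuously to each boundary point, and the extension is automatically a map into $\overline G$ because $f$ is proper.

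The main obstacle is precisely controlling $f=(u\cdot f_j)/u$ at points of $\partial D$ where $u\to 0$: one must convert ``$u$ vanishes to finite order at each point'' (a pointwise statement) into a usable uniform-near-$p$ lower bound $\abs{u(z)}\gtrsim d(z)^M$, which requires a compactness/semicontinuity argument on the order of vanishing, and then combine it with the Cauchy-type derivative bounds on $u\cdot f_j$ and the distance estimate of Proposition~\ref{prop-dist} to extract an actual modulus of continuity. I expect to cite \cite{Be1}, \cite{B2}, \cite{BC} for the bookkeeping here, since the argument is formally identical to the smoothly bounded case once Proposition~\ref{prop-symfun}, the finite-order vanishing lemma, and Proposition~\ref{prop-dist} are in hand — all of which have been established above for product (indeed Lipschitz) domains.
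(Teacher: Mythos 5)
There is a genuine gap at the point you yourself flag as the ``main obstacle''. Your plan is to write $f_j=(u\cdot f_j)/u$ and control the quotient via a local lower bound $\abs{u(z)}\gtrsim d(z)^M$ deduced from the finite order of vanishing of $u$ at $p$. No such bound holds in general, and no ``compactness/semicontinuity argument on the order of vanishing'' can produce one: finite order of vanishing at $p$ only says that some derivative of $u$ of order $k$ is nonzero at $p$, while the zero set $\{u=0\}$ is a positive-dimensional analytic subvariety of $D$ that can cluster at $p$ from inside the domain (think of $u(z)=z_1$ at $p=0$: order of vanishing $1$, yet $u$ vanishes identically on $\{z_1=0\}\cap D$, so $\abs{u(z)}\gtrsim d(z)^M$ fails on a sequence tending to $p$). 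Consequently the pointwise division argument, and the modulus-of-continuity estimate you build on it, collapse exactly where the conclusion is in doubt. Your alternative sketch via the pseudopolynomials $P_j(z,w)$ also runs the wrong way: $P_j(z,f(z))=0$ constrains $z$ in terms of $w=f(z)$, which is what the \emph{previous} lemma (finite-order vanishing) needs, not what continuity of $f$ at $p$ needs.

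The paper's proof takes a different, and essential, route: the weak division theorem of Diederich--Fornaess. One restricts to a single complex line $L$ transverse to $\partial D$ at $p$, uses that $u\cdot f_1^N\in\mathcal{H}^\infty(D)$ for \emph{all} powers $N$ (obtained by taking $h=z_1^N$ in Lemma~\ref{lem1}, not just $h=z_1$), and expands $u\cdot f_1^N$ on $L\cap D$ to order $k$, where $k$ is the vanishing order of $u$ at $p$. A universal-polynomial identity for $(u\cdot f_1^N)^{(k)}$, evaluated along one sequence where $f_1\to 0$ and another where $f_1\to\gamma\neq 0$, followed by an elimination in $N$ (comparing the identities for $N$ and $2N$ repeatedly), forces $u^{(k)}\cdot f_1^k\to 0$ along the second sequence --- contradicting $u^{(k)}(0)\neq 0$ and $\gamma\neq 0$. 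The finite order of vanishing enters only through this one-variable slice and the algebraic elimination, never as a pointwise lower bound on $\abs{u}$. Your proposal correctly assembles the ingredients (Lemma~\ref{lem1}, finite-order vanishing, Proposition~\ref{prop-dist}) but the mechanism you propose for combining them does not work; you would need to import the Diederich--Fornaess division argument (or an equivalent) rather than divide by $u$ directly.
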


\begin{proof}
To show that $f$ admits a continuous extension to all points of $\partial D$, the weak division theorem from \cite{DF} can be applied here. To begin with, recall that 
$u \in \mathcal C^{\infty}(\overline D)$ and that $u \cdot (h \circ f) \in \mathcal C^{\infty}(\overline D)$ for all $h \in \mathcal H^{\infty}(G)$. Let $f = (f_1, f_2, \ldots, f_n)$ 
where $f_j \in \mathcal O(D)$ for $1 \le j \le n$. Taking $h = z_1^N$, $N \ge 1$, we see that $u \cdot f_1^N \in \mathcal H^{\infty}(D)$ 
for all $N \ge 1$. Pick an arbitrary point 
$p \in \partial D$ and let $L$ be a complex line that is transverse to the tangent cone to $\partial D$ at $p$ and which enters $D$ near $p$. The set of all such lines is open and 
non-empty. We may assume that $p=0$ and $L$ is  the $z_1$ axis $\{z_2=\dots=z_n=0\}$ 
in $\mathbb C^n$. Since $f_1$ is a bounded function on $D$, there exists a sequence $\{p_j\} \subset D 
\cap L$ such that $p_j \rightarrow 0$ and $f_1(p_j)$ converges; in fact after subtracting a constant from $f_1$ and still denoting the resulting function by $f_1$ we have that $f_1(p_j) 
\rightarrow 0$. For $g \in \mathcal O(D)$, let $g^{(s)} = \partial^s g/\partial z_1^s$.

\medskip

Suppose that $f_1$ does not extend continuously to the origin. Then there is a sequence $\{q_j\} \in D$ with $q_i \rightarrow 0$ such that $f_1(q_j) \rightarrow \gamma \not= 0$. That 
$u \cdot f_1^N$ is smooth on $\overline D$ for all $N \ge 1$ is used in the following way. Let $k < \infty$ denote the order of vanishing of $u$ at the origin and fix $N > k$. 
Since $u \cdot f_1^N \in \mathcal O(D) \cap \mathcal C^{\infty}(\overline D)$, it follows that its restriction to $L \cap D$ has an expansion around the origin of the form
\[
u \cdot f_1^N \vert L \cap D = \sum_{j = 0}^k b_j z_1^j + O(\vert z_1 \vert^{k + 1}).
\]
where $b_0 = b_1 = \ldots = b_{k - 1} = 0$ since $u$ is smooth on $\overline D$ and vanishes to order $k$ at the origin and $f_1^N$ is bounded. To see that $b_k = 0$, note that by 
induction, there exist universal polynomials $P_j$, $j \ge 1$ in $u, f_1$ and their derivatives such that 
\begin{equation}
(u \cdot f_1^M)^{(l)} = u^{(l)} \cdot f_1^M + \left( \sum_{j = 1}^l M^j P_j \right) \cdot f_1^{M - l}
\end{equation}
for all $l \ge 1$ and $M > l$. Now let $M = N$ and $l = k$. Along the sequence $\{p_j\}$, we see that the left side 
\[
(u \cdot f_1^N)^{(k)}(p_j) \rightarrow (u \cdot f_1^N)^{(k)}(0) = b_k
\]
while the right side contains $f_1^N$ and $f_1^{N - k}$ (note that $N >k$ by choice!) both of which converge to zero by assumption. Thus $b_k = 0$ as well.

\medskip

However, $f_1(q_j) \rightarrow \gamma \not= 0$ by assumption and by factoring out $f_1^{N - k}$ in (5.2) we see that
\begin{equation}
u^{(k)} \cdot f_1^k + \left(\sum_{j=1}^k N^j P_j\right) \rightarrow 0
\end{equation}
along $\{q_j\}$. Note that (5.3) holds for all $N > k$. Writing (5.3) for $2N$ we have
\begin{equation}
u^{(k)} \cdot f_1^k + \left(\sum_{j=1}^k (2N)^j P_j\right) \rightarrow 0
\end{equation}
along $\{q_j\}$. Multiplying the above equation by $2^{-k}$ and subtracting it from (5.3) gives
\begin{equation}
u^{(k)} \cdot f_1^k + \left(\sum_{j=1}^{k-1} N^j P_j (1 - 2^j 2^{-k})/(1 - 2^{-k})\right) \rightarrow 0
\end{equation}
along $\{q_j\}$. Note that (5.5) again holds for all $N > k$ and more importantly, the sum which involves $N^j P_j$ and other unimportant constants, now has only $k - 1$ terms. Repeat 
the above procedure -- write (5.5) for $2N$, multiply it by $2^{-k+1}$ and subtract it from (5.5). The first term in the resulting equation is still $u^{(k)} \cdot f_1^k$ while the 
sum now has only $k - 2$ terms. Proceeding this way, we finally get that
\[
u^{(k)}  \cdot f_1^k \rightarrow 0
\]
along $\{q_j\}$. This is a contradiction, since $f_1(q_j) \rightarrow \gamma \not= 0$ and $u^{(k)}(0) \not= 0$ since $u$ vanishes to order $k$. Thus $f_1$ and likewise all the other 
components of $f$ extend continuously to $p \in \partial D$.
\end{proof}

\noindent Since $f$ is proper and also continuous by the above result, we have $f(\partial D)= \partial G$. Denote the map $f:D\to G$ by
\[ 
f =(f_1, f_2, \dots,f_l)
\]
where $f_j$ is holomorphic from $D$ to $G^j$, and extends continuously to $\overline{D}$. We will need the following result of
Ligocka (\cite[Theorem~1]{L}:)
\begin{lem} For each $j$, with $1\leq j\leq l$, there is an $i$, with $1\leq i\leq k$, such that $f_j:D\to G^j$ depends only on on the 
factor $D^i$ in the product representation $D=D^1\times\dots \times D^k$. 
\end{lem}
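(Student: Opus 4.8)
The plan is to show that each component map $f_j:D\to G^j$ has a complex Jacobian matrix (with respect to the $D$-variables) which is block-diagonal in a very rigid way: only one block $D^i$ contributes. First I would use that $f:D\to G$ extends continuously to $\overline D$ and that $u=\det f'$ is smooth up to $\overline D$ and vanishes to finite order on $\partial D$; this, together with the smoothness of $u\cdot(h\circ f)$ for all $h\in\mathcal H^\infty(G)$ (Lemma~\ref{lem1}), is the analytic input. The geometric heart is the observation, going back to Ligocka, that since $f$ is proper it carries $\partial D$ onto $\partial G$, and $\partial G$ decomposes as the union over $j$ of the ``faces'' $\overline{G^1}\times\cdots\times\partial G^j\times\cdots\times\overline{G^l}$. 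Pulling this back, $\partial D$ is correspondingly a union of ``faces'' coming from the $\partial D^i$. A dimension/foliation argument then forces each component $f_j$ to be constant along the leaves of all but one of the product foliations of $D$.

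Concretely, I would argue as follows. Fix $j$. For $w$ in the product $\prod_{i}\overline{D^i}$ lying off the singular locus, the smooth extension of $f$ maps the boundary face where the $i$-th coordinate hits $\partial D^i$ into $\partial G$, hence into one of the faces $\partial G^{j'}$. An open-and-connected argument (the regular part of $\partial D$ corresponding to a fixed $i$ is connected, and $f$ is continuous there) shows there is a well-defined assignment $i\mapsto j'=:\tau(i)$: the face of $D$ from $\partial D^i$ goes into the face of $G$ from $\partial G^{\tau(i)}$. Now consider the component $f_j$. For every $i$ with $\tau(i)\neq j$, the map $f_j$ sends the $i$-th boundary face of $D$ into the \emph{interior} $G^j$ (since only the $\tau(i)$-th $G$-coordinate degenerates there). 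Slicing: fixing all $D$-coordinates except the $i$-th, $f_j$ restricted to such a slice is a bounded holomorphic function on $D^i$ whose continuous boundary values lie in the \emph{closed} set $\overline{G^j}$ but in fact, on the relevant part of $\partial D^i$, stay in $G^j$ away from $\partial G^j$. The point is to upgrade this to: $f_j$ restricted to each such slice is constant. Here I would invoke that $f$ is proper — the preimage of a boundary point of $G^j$ under the slice map is either empty or the whole slice — combined with the maximum principle / the fact that a proper holomorphic map cannot send an entire analytic disc to an interior point unless it is constant on a neighborhood, together with the finite multiplicity $m$ of $f$. More robustly, one uses that the symmetric functions of the branches $h\circ F_k$ are holomorphic on $G$ and smooth up to $\overline G$ (Proposition~\ref{prop-symfun}): taking $h$ to be a coordinate function of the $D^i$-factor, one gets that these symmetric functions, restricted to a generic slice of $G$ in which only the $\tau(i)$-th coordinate varies, are independent of the $j$-th coordinate of $G$; reading this back through $f$ forces $f_j$ to be independent of the $i$-th coordinate of $D$.

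The main obstacle I expect is making the ``face $\partial D^i\mapsto$ face $\partial G^{\tau(i)}$'' correspondence rigorous and single-valued, and then passing from the boundary statement ``$f_j$ maps the $i$-th boundary face into the interior of $G^j$ when $\tau(i)\neq j$'' to the interior statement ``$f_j$ is independent of the $i$-th variable''. The first part needs the finite-order vanishing of $u$ on $\partial D$ so that the regular boundary is dense and the preimage structure is controlled; the continuity of $f$ on $\overline D$ and connectedness of the regular part of each face then pin down $\tau$. The second part is the genuinely delicate step: one must rule out that $f_j$ varies along a slice while still keeping its boundary values in $\overline{G^j}$. The cleanest route is to differentiate the pseudopolynomial relations $P_j(z,f(z))=0$ whose coefficients are holomorphic on $G$ and smooth up to $\overline G$ — writing $z=(z^{(1)},\dots,z^{(k)})$ and applying $\partial/\partial z^{(i)}$ for $i$ with $\tau(i)\neq j$ — and show the resulting identity forces $\partial f_j/\partial z^{(i)}\equiv 0$ on $D$, using that the relevant partial derivatives of the coefficient functions of $P_j$ vanish identically because those coefficients are symmetric functions of boundary branches that degenerate only in the $\tau(i)$-direction. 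Once $f_j$ depends on a single $D^i$ for each $j$, the statement follows; the bookkeeping of which $i$ works for which $j$, and why it is unique, is then a matter of counting and connectedness, which I would defer to the subsequent argument in the paper.
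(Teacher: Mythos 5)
There is a genuine gap, and it sits exactly where you yourself flag the ``genuinely delicate step.'' The decisive idea in the paper's proof, absent from your proposal, is to use the \emph{strongly pseudoconvex} points of $\partial G^j$: the set $V$ of such points is a nonempty open subset of $\partial G^j$ (all of $\partial G^j$ when $G^j$ is planar) and, being strongly pseudoconvex, contains no analytic variety of positive dimension. Since $f_j^{-1}(V)$ is open in $\partial D$ and the regular part of $\partial D$ is dense, one finds after relabelling an open set $U\times W\subset \partial D^1\times D'$, with $D'=D^2\times\cdots\times D^k$, that $f_j$ maps into $V$. For fixed $u\in U$ the map $z\mapsto f_j(u,z)$ is holomorphic on the complex manifold $D'$ and sends the open set $W$ into $V$, hence is locally constant there; then for $z_1,z_2\in W$ the function $w\mapsto f_j(w,z_1)-f_j(w,z_2)$ is holomorphic on $D^1$, continuous up to the boundary, and vanishes on the open subset $U$ of $\partial D^1$, so it vanishes identically by boundary uniqueness. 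Note the slicing direction: one fixes the \emph{boundary} coordinate $u\in\partial D^1$ and lets the remaining factors vary.

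Your slicing goes the other way, and none of the mechanisms you offer closes the argument. Knowing that $f_j$ restricted to a slice $\{z'\}\times D^i$ is holomorphic, continuous up to $\overline{D^i}$, and has boundary values lying in the open set $G^j$ gives no constancy whatsoever: a nonconstant holomorphic function can have its boundary values in any prescribed open set, and neither the maximum principle nor finite multiplicity helps. The properness remark (``the preimage of a boundary point of $G^j$ under the slice map is either empty or the whole slice'') is not a correct statement, and the proposed differentiation of the pseudopolynomials $P_j(z,w)$ is not developed to a point where it could be checked; I do not see how it would yield $\partial f_j/\partial z^{(i)}\equiv 0$ without already knowing the local constancy that strong pseudoconvexity provides. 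Finally, the face correspondence $i\mapsto\tau(i)$ you posit at the outset is essentially a restatement of the rigidity the lemma is meant to prove (single-valuedness of $\tau$ on an entire face is not free, since the faces of $\partial G$ overlap and the image of a $D$-face need not stay in the regular part of $\partial G$), so assuming it is close to circular. The paper needs no such global correspondence: a single open patch of a single face mapped into $V$ suffices.
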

\begin{proof} Assume without loss of generality that $j=1$. Let $V\subset \partial G^1$ be the set of strongly pseudoconvex points 
of the boundary of $G^1$. It is well-known that $V$ is non-empty and open, and when $G^1$ is a domain in the complex plane $\cx$, 
we take $V$ to be the whole of $\partial G^1$. Then $f_1^{-1}(V)$ is an open subset of $\partial D$, and therefore has a a non-empty intersection
with the smooth part $\partial D^{\rm reg}$ (since the latter is dense in $\partial D$.)
The smooth part   $\partial D^{\rm reg}$  itself is the disjoint union of $k$ pieces, each of which is the product of $k-1$ factors
$D^i$ with the boundary of the remaining factor $D^j$. Therefore, after renaming the indices, we can assume that 
\[ f_1^{-1}(V)\cap \left(\partial D^1 \times D^2 \times \ldots \times D^k\right) = f_1^{-1}(V)\cap\left( \partial D^1\times D'\right)\]
is non-empty and open. Here we are denoting the product $D^2 \times D^3 \times \ldots \times D^k$ by $D'$.  
Therefore, there is an open $U\subset \partial D^1$ and an open $W\subset D'$, such that $f_1$  maps the product
$U\times W$ into $V$. Fixing $u\in U$, this means that  the mapping $z\mapsto f_1(u,z)$  maps an open subset of  the complex variety $D'$
into the set $V$, which being strongly pseudoconvex does not contain any analytic varieties of positive dimension. It follows that $z\mapsto f_1(u,z)$
is locally constant on $W\subset D'$.  We now claim that the mapping $f_1:D^1\times D'\to G^1$ depends only on $D^1$.  For $z_1, z_2\in D'$, consider 
the function $g(w) = f_1(w,z_1)- f_1(w,z_2)$. Then $g$ is holomorphic on $D^1$ and continuous up to the boundary, and vanishes on the open 
subset $U$ of the boundary $\partial D^1$. It follows from the identity principle that $g\equiv 0$, so that $f_1$ is independent of $D'$.
\end{proof}

For each $i$, with $1\leq i \leq k$, let $J(i)$ denote the subset of $\{1,\dots, l\}$ such that for $\alpha\in J(i)$,
the map $f_\alpha:D\to G^\alpha$ depends only on $D^i$. By the above lemma, and the fact that $f$ is proper, it follows 
that each $J(i)$ is non-empty, the $J(i)$'s are disjoint and the union of all the $J(i)$'s is the set $\{1,\dots, l\}$.
If we define for $1\leq i \leq k$ 
\[ \mathcal{G}_i = \prod_{\alpha\in J(i)} G^\alpha,\]
we can represent the map $f$ as
\[
f = f^1 \times f^2 \times \ldots \times f^k
\]
where for each $1 \le i \le k$, the map 
\[
f^i : D^i \rightarrow \mathcal{G}_i
\]
is proper, and extends continuously to $\overline{D^i}$. Hence 
\begin{equation}
\mu_i \le \sum_{\alpha \in J(i)} \nu_{\alpha}
\end{equation}
for all $1 \le i \le k$. Adding these inequalities gives
\begin{equation}
n = \mu_1 + \mu_2 \ldots + \mu_k \le \sum_{i =1}^k \sum_{\alpha \in J(i)} \nu_{\alpha} \le n.
\end{equation}
This shows that each inequality in (5.6) must actually be an equality and thus we may conclude that
\[
f^i : D^i \rightarrow  \mathcal{G}_i
\]
is a proper holomorphic mapping between equidimensional domains for each $1\le i \le k$ that extends smoothly to $\partial D^i$.
Let 
\[ v_i = \det \left(f^i\right)',\]
be the complex Jacobian determinant of this mapping.  Taking $h\equiv 1$ in Lemma~\ref{lem1}, we conclude that $v_i\in \mathcal{H}^\infty(D_i)$. Further,
taking $h$ to be equal to the coordinate functions in Lemma~\ref{lem1},   we conclude that $f^i$ extends smoothly to 
$\overline{D^i}\setminus \{v_i=0\}$. Of course, from the previous work, we know that $f^i$ extends continuously to $\overline{D^i}$.

\medskip

Note that the closed set $\partial D^i \cap \{v_i=0\}$ is nowhere dense in $\partial D^i$, since if there is an open subset of  $D^i$  on which 
$v_i$ vanishes,  the uniqueness 
theorem will force $v_i$ to be identically zero in $D^i$ thus contradicting that $f^i$ is proper.  Now, the boundary $\partial D^i$ is 
strongly pseudoconvex on an open subset. Therefore, we can pick a point $p\in \partial D^i$, such that $\partial D^i$ is strongly pseudoconvex 
at $p$, and $v_i(p)\not=0$.   Then $f^i$ must map a small relatively open neighborhood of $p$ on $\partial D^1$ diffeomorphically into 
the smooth part of the boundary of $\mathcal{G}_i$. Hence $f^i(p)$ must be a strongly pseudoconvex point, but this is possible only if $\mathcal{G}_i$
consists of exactly one smooth factor. Therefore, each $J(i)$ consists of exactly one element  for all $1 \le i \le k$. 
It follows that the product mapping $f$ is such that each factor
\[
f^i : D^i \rightarrow G^j
\]
is a proper mapping between  smoothly bounded 
equidimensional domains. It now follows that $l=k$, and  setting $j= \sigma(i)$,  we obtain the permutation $\sigma$ of 
conclusion (iii). Clearly, we have a product representation
\[ f = f^1\times f^2\times \ldots \times f^k.\]
Further, by \cite{BC}, each $f^j$ extends as a smooth map from $\overline{D^j}$ to $\overline{G^{\sigma(j)}}$. It follows now that $f$ extends as a smooth map 
from $\overline{D}$ to $\overline{G}$ and the proof is complete.


\end{document}